\documentclass[11pt, bibliography=totoc]{scrartcl}
\usepackage[T1]{fontenc}

\usepackage{amssymb,amsmath,amsthm, mathrsfs}

\usepackage{lmodern}
\usepackage{microtype}

\usepackage{mathtools} 
\mathtoolsset{showonlyrefs} 
\usepackage{nicefrac,xfrac}
\usepackage{tikz-cd}
\usepackage{enumitem}
\usepackage[colorlinks=true, urlcolor=blue, linkcolor=blue, citecolor=blue]{hyperref}

\usepackage[
  backend=biber,
  style=alphabetic,
  sorting=nyt,
  maxnames=6,
  maxalphanames=6,
  giveninits=true,
  doi=false,
  isbn=false,
  url=false,
]{biblatex}
\DeclareFieldFormat{extraalpha}{#1}
\DeclareLabelalphaTemplate{
  \labelelement{
    \field[final]{shorthand}
    \field{label}
    \field[strwidth=3,strside=left,ifnames=1]{labelname}
    \field[strwidth=1,strside=left]{labelname}
  }
}

\numberwithin{equation}{section}
\newtheorem{theorem}{Theorem}[section]
\newtheorem{proposition}[theorem]{Proposition}
\newtheorem{corollary}[theorem]{Corollary}

\newtheorem{lemma}[theorem]{Lemma}

\theoremstyle{definition}

\newtheorem{remark}[theorem]{Remark}

\theoremstyle{remark}
\newtheorem{example}[theorem]{Example}

\newcommand{\R}{\mathbb{R}}  
\newcommand{\C}{\mathbb{C}}  
\newcommand{\Sph}{\mathbb{S}}  
\DeclareMathAlphabet{\mathbbold}{U}{bbold}{m}{n}

\DeclareMathOperator{\id}{id}

\DeclareMathOperator{\rk}{rk}

\DeclareMathOperator{\Tr}{Tr}

\DeclareMathOperator{\supp}{supp}
\DeclareMathOperator{\sing}{sing}

\DeclareMathOperator{\Vol}{Vol}

\DeclarePairedDelimiterX\set[2]{\{}{\}}{\,#1 \;\delimsize\vert\; #2\,}
\renewcommand*{\d}{\mathrm{d}} 
\newcommand*{\abs}[1]{\left|#1\right|} 
\newcommand*{\norm}[1]{\left\|#1\right\|} 
\newcommand*{\brr}[1]{\left(#1\right)} 
\newcommand*{\brs}[1]{\left[#1\right]} 
\newcommand*{\brc}[1]{\left\{#1\right\}} 
\newcommand*{\brt}[1]{\left\langle #1\right\rangle } 
\mathchardef\mhyphen="2D 
\renewcommand*{\epsilon}{\varepsilon}
\renewcommand*{\phi}{\varphi}
\newcommand*{\harm}{\mathbb{H}} 

\makeatletter
\newcommand*{\oset}[3][0.45ex]{%
  \mathrel{\mathop{#3}\limits^{
    \vbox to#1{\kern-2\ex@
    \hbox{$\scriptstyle#2$}\vss}}}}
\makeatother

\def\XXint#1#2#3{{\setbox0=\hbox{$#1{#2#3}{\int}$ }
\vcenter{\hbox{$#2#3$ }}\kern-.6\wd0}}

\newcommand{\overbar}[1]{\mkern 1.5mu\overline{\mkern-1.5mu#1\mkern-1.5mu}\mkern 1.5mu}
\renewcommand*{\t}{\boldsymbol{\tau}} 
\newcommand*{\wH}[2][]{H^{1}\ifx\empty#1\empty(#2)\else(#1,#2)\fi} 
\newcommand*{\D}{\mathrm{Lip}}
\newcommand*{\n}{n}

\begin{document}
\author{Denis Vinokurov}
\date{}
\title{Eigenvalue optimization via a first-variation formula}
\maketitle
\begin{abstract}
  We compute the Clarke subdifferential of the $k$th eigenvalue functional on the space of self-adjoint operators,
  obtaining a first-variation formula that remains valid even when the eigenvalue lies at the edge of the essential spectrum.
  This formula provides an effective tool for describing the structure of critical points in eigenvalue optimization problems
  and can also yield simple proofs of the existence of optimizers.
  We illustrate these advantages through applications to the optimization of weighted Laplace and Steklov eigenvalues.
  In particular, we characterize all optimal weights, thereby answering some open questions posed by Kokarev,
  and give a short proof that such weights exist.
\end{abstract}
\tableofcontents

\section{Introduction and main results}

\subsection{Weighted optimization of Laplace and Steklov eigenvalues}

Let $\Omega \subset M$ be a bounded \emph{continuous} domain in a connected Riemannian manifold $(M,g)$ of dimension $d\geq 2$. For convenience, we assume that all our domains are \emph{connected}. Note that we also include the case
of $\partial \Omega = \varnothing$, that is, when $\Omega = M$ is itself a closed manifold.
By a continuous domain, we mean a domain that can be locally represented as the epigraph of a continuous function. In particular, such a domain can be viewed as a topological
manifold with boundary. Bounded continuous domains enjoy the following properties
(see, for example, \cite[Theorem~1.1.6/2]{Mazja:1985:-sobolev-spaces} and \cite[Theorem~V.4.17]{Edmunds-Evans:1987:spec-theory-diff-operators}):
\begin{itemize}
    \item $C^\infty(\overbar{\Omega})$ is dense in $H^1(\Omega)$;
    \item the embedding $H^1(\Omega) \to L^2(\Omega)$ is compact.
\end{itemize}
Let $\mu \in \mathcal{M}_+(\overbar{\Omega})$ be a positive Radon measure.
Following~\cite{Grigoryan-Netrusov-Yau:2004:eignval-of-ellipt-oper}, for every integer $k \geq 0$, we define $\lambda_k(\mu) \in [0,\infty]$ by
\begin{equation}\label{eq:meas-eigen}
    \lambda_k(\mu) = \lambda_k(\Omega,\mu) := \inf_{V_{k+1} \subset \D(\overbar{\Omega})} \sup_{\phi \in V_{k+1}\setminus\brc{0}} \frac{\int_{\Omega} \abs{\d \phi}^2}{\int_{\overbar{\Omega}}\phi^2 \d \mu}.
\end{equation}
One may equivalently use $C^\infty(\overbar{\Omega})$ or $H^1\cap C^0(\overbar{\Omega})$ in place of $\D(\overbar{\Omega})$.
Note that $\lambda_k(\mu) < \infty$ provided that $L^2(\overbar{\Omega}, \mu)$ is at least $(k+1)$-dimensional. In particular,
$\lambda_k(\mu) < \infty$ whenever $\mu \in \mathcal{M}_+^c(\overbar{\Omega})$ is continuous (that is, nonatomic). Moreover,
Proposition~\ref{prop:meas-bilinear} implies
\begin{equation}\label{def:capacitory-meas}
  \bigcup_{k\geq 1}\set{\mu \in \mathcal{M}_+^c(\overbar{\Omega}){}\setminus\brc{0}}{\lambda_k(\mu) > 0} \subset \mathfrak{Bil}[H^1(\Omega)];
\end{equation}
that is, if $\lambda_k(\mu) >0$ for some $k \geq 1$, then $\mu$ induces a bounded bilinear form
on $H^1(\Omega)$. The eigenvalues $\lambda_k(\mu)$ then constitute precisely the bottom part of the spectrum of the following weighted ``Neumann'' eigenvalue problem:
\begin{equation}
  \Delta u = \lambda u \mu \quad \text{in }\ H^1(\Omega)^*,
\end{equation}
where $\Delta = \d^* \d$ denotes the Laplace--Beltrami operator on $M$.

We set
\begin{equation}
    \overbar{\lambda}_k(\mu) = \mu(\overbar{\Omega}){\lambda}_k(\mu)
\end{equation}
and $\overbar{\lambda}_k(0) := 0$.
Now, choose a smooth $\Omega'$ such that $\Omega \Subset \Omega' \Subset M$ and extend $\mu$ to all of $M$ by setting $\mu(S) = \mu(\overbar{\Omega}\cap S)$.
By definition, $\lambda_k(\Omega,\mu) \leq \lambda_k(\Omega',\mu)$.
Viewing $\Omega$ as a subset of the double of $\Omega'$ and varying the metric outside $\Omega$,
we obtain from~\cite[Remark~5.10]{Grigoryan-Netrusov-Yau:2004:eignval-of-ellipt-oper} that
\begin{equation}
  \overbar{\lambda}_k(\Omega,\mu) \leq C\Vol_g(\Omega)^{1-2/d} k^{2/d} \quad \forall \mu \in \mathcal{M}_+^c(\overbar{\Omega}),
\end{equation}
where $C = C(\Omega',[g])$.
This motivates the following definitions:
\begin{equation}
  \Lambda_k(\Omega,g) = \sup_{\mu \in \mathcal{M}_+^c(\overbar{\Omega})} \overbar{\lambda}_k(\mu)
  \quad\text{and}\quad
  \Lambda_k^*(\Omega,g) =\sup_{\mu \in L^1_+(\Omega)} \overbar{\lambda}_k(\mu).
\end{equation}
The motivation for considering $\Lambda_k^*(\Omega,g)$ is that
$\Lambda_k^*(\Omega,g) = \sup \set{\overbar{\lambda}_k(\mu)}{\mu \in C^\infty_+(\overbar{\Omega})}$
when $\Omega$ is smooth
(see \cite[Proposition~2.9]{Vinokurov:2025:higher-dim-harm-eigenval}), and in dimension 2,
it coincides with the conformal optimization problem:
\begin{equation}
  \Lambda_k^*(\Omega,g) = \Lambda_k^*(\Omega,[g]) = \sup_{\tilde{g} \in [g]} \overbar{\lambda}_k(\tilde{g}).
\end{equation}

While it is not difficult to prove the existence of maximizing measures for $\Lambda_k(\Omega,g)$
(cf. \cite[Theorem~$B_1$]{Kokarev:2014:measure-eigenval} for closed surfaces), very little is
known about the regularity of such measures (cf. \cite[Theorem~$C_k$]{Kokarev:2014:measure-eigenval}).
On the other hand, proving the existence of maximizers for $\Lambda_k^*(\Omega,g)$ is considerably more involved \cite{Nadirashvili-Sire:2015:conf-spec-and-harm-maps,Petrides:2018:exist-of-max-eigenval-on-surfaces,
Karpukhin-Nadirashvili-Penskoi-Polterovich:2022:existence, Karpukhin-Stern:2024:new-character-of-conf-eigenval,Karpukhin-Stern:2024:harm-map-in-higher-dim,
Petrides:2024:conf-class-opt-lms, Vinokurov:2025:sym-eigen-val-lms, Vinokurov:2025:higher-dim-harm-eigenval}. However, by
\cite[Theorem~1.1]{Vinokurov:2026:p-harm-and-conf-class-opt}, the regularity of maximizing measures is well understood when $\partial \Omega = \varnothing$
(see also \cite{Karpukhin-Stern:2024:new-character-of-conf-eigenval,Karpukhin-Stern:2024:harm-map-in-higher-dim} for complementary results for ``admissible'' measures).
Namely, after rescaling, any maximizing $\mu \in L^1(\Omega)$ has the form $\mu = \abs{\d u}^2$ for a locally (spectrally) stable harmonic map $u \in H^1(\Omega, \Sph^n)$ for some $n \geq 1$, and,
in fact, $u \in C^\infty(\Omega, \Sph^n)$ in dimensions $2\leq d \leq 6$.

We denote the unit sphere and unit ball of $\R^\infty := \ell^2$ by $\Sph^{\infty}$ and $\mathbb{B}^\infty$. We also identify functions with measures via the Riemannian volume form.
As an application of Corollary~\ref{cor:subdiff-eigenval}, we can show that no new maximizers arise when passing from $\Lambda_k^*(\Omega,g)$ to $\Lambda_k(\Omega,g)$:
\begin{theorem}\label{thm:qualitative-stability}
  Let $\Omega \subset M$ be a bounded continuous domain in a Riemannian manifold $(M,g)$ of dimension $d \geq 2$, and let
  $\mu \in \mathcal{M}^c_+(\overbar{\Omega})$ be such that $\overbar{\lambda}_k(\mu) = \Lambda_k(\Omega,g)$ for some $k \geq 1$. Then there exists
  a harmonic map $u \in H^1(\Omega, \Sph^{\infty})$ whose components are $k$th eigenfunctions ($\Delta u = \lambda_k u \mu$)
  such that $\lambda_k \mu = \abs{\d u}^2$.
  Moreover, $u \in C^\infty(\Omega\setminus\sing u, \Sph^{\infty})$, where $\sing u \cap \Omega$ is a relatively closed set of Hausdorff
  dimension at most $d - 7$.

  In addition:
  \begin{itemize}
    \item if $\partial\Omega = \varnothing$, then $im\, u \subset \Sph^n$  for some $\Sph^n \subset \Sph^\infty$ and $\sing u = \varnothing$ for $2\leq d \leq 6$;
    \item if $H^1(\Omega)\to L^2(\mu)$ is compact and $\partial \Omega$ is smooth, then $u \in C^\infty(\overbar{\Omega}, \Sph^n)$  for some $\Sph^n \subset \Sph^\infty$.
  \end{itemize}
\end{theorem}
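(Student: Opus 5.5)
The plan is to derive the Euler--Lagrange condition for a maximizer from the first-variation formula (Corollary~\ref{cor:subdiff-eigenval}). We then turn that condition into a harmonic map by the standard ``sum of squares equals constant'' argument, and finally import the known regularity theory.

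\textbf{Step 1: the optimality condition.} Since $\overbar{\lambda}_k(\mu)=\Lambda_k(\Omega,g)>0$ and $\mu$ is nonatomic, \eqref{def:capacitory-meas} shows that $\mu\in\mathfrak{Bil}[H^1(\Omega)]$. Thus $\lambda_k(\mu)$ is the $k$th point of the spectrum of the operator associated with the pair of forms $\int\abs{\d\phi}^2$ and $\int\phi^2\,\d\mu$, possibly at the edge of the essential spectrum. We normalize $\mu(\overbar\Omega)=1$ and consider variations $\mu_t=\mu+t\nu$ with $\nu=\rho-\mu(\rho)\mu$, where $\rho$ ranges over nonnegative continuous functions or continuous measures. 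For such $\nu$ the operator perturbation is a bounded form perturbation. Maximality means $t\mapsto \overbar\lambda_k(\mu_t)$ has a maximum at $t=0$ on admissible directions. The Clarke subdifferential computed in Corollary~\ref{cor:subdiff-eigenval} is the closed convex hull of $-\lambda_k\,\phi^2$ over normalized generalized eigenfunctions $\phi$, where ``generalized'' means weak limits of approximate eigenvectors when $\lambda_k$ touches the essential spectrum. So $0$ in the Clarke subdifferential of $-\overbar\lambda_k$ yields, via a Hahn--Banach/minimax separation argument, the following: there are nonnegative weights $c_i$ with $\sum c_i=1$ and an $L^2(\mu)$-orthonormal family $\phi_i$ of $\lambda_k$-eigenfunctions, countably many in general, such that $\sum_i c_i\phi_i^2=1$ $\mu$-a.e. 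More precisely, the equality holds as measures, tested against all continuous $\rho$. Setting $u=(\sqrt{c_i}\,\phi_i)_i$ gives $u\in H^1(\Omega,\R^\infty)$ with $\abs{u}^2=1$ $\mu$-a.e. and $\Delta u=\lambda_k u\mu$.

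\textbf{Step 2: $\abs{u}=1$ everywhere and $\abs{\d u}^2=\lambda_k\mu$.} I expect this to be the main obstacle, because $\mu$ may be singular and need not charge all of $\Omega$. The trick is to vary also in directions $\rho$ supported where $\mu$ is small. For nonnegative test functions the one-sided variational inequality gives $\int\abs{u}^2\rho\le\int\rho\,\d\mu\cdot(\ldots)$, that is, $\abs{u}^2\le1$ quasi-everywhere, in addition to $\abs{u}=1$ $\mu$-a.e. Next, test $\Delta u=\lambda_k u\mu$ against $u\psi$ and combine it with the identity $\tfrac12\Delta\abs{u}^2=u\cdot\Delta u-\abs{\d u}^2$. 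This gives $\tfrac12\Delta\abs{u}^2=\lambda_k\mu-\abs{\d u}^2$ in $H^{-1}$. The function $w=1-\abs{u}^2\ge0$ vanishes on $\supp\mu$ and is superharmonic off $\supp\mu$. With Neumann conditions on $\partial\Omega$ and connectedness, a comparison or minimum-principle argument in the spirit of the Kokarev/Karpukhin--Stern treatment should force $w\equiv0$. The delicate point here is handling capacity and traces for merely continuous $\partial\Omega$. Then $\abs{\d u}^2=\lambda_k\mu$, so $\mu\in L^1$, and $\Delta u=\abs{\d u}^2u$, which means $u$ is a weakly harmonic map into $\Sph^\infty$.

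\textbf{Step 3: regularity.} Since $u$ comes from a maximizer, it is also (spectrally) stable in the sense used in \cite{Vinokurov:2026:p-harm-and-conf-class-opt}, and in fact minimizing-type within the Karpukhin--Stern framework. We therefore invoke the interior regularity there: smoothness off a closed singular set of dimension at most $d-7$ for $\Sph^\infty$-valued maps, which is valid because the eigenspace structure gives uniform energy control. When $\partial\Omega=\varnothing$, the space of $\lambda_k$-eigenfunctions is finite-dimensional because the spectrum is discrete. Hence $u$ spans a finite-dimensional space and takes values in some $\Sph^n$, and \cite[Theorem~1.1]{Vinokurov:2026:p-harm-and-conf-class-opt} gives $\sing u=\varnothing$ for $d\le6$. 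If $H^1\to L^2(\mu)$ is compact, the eigenspace is again finite-dimensional. With smooth $\partial\Omega$, $u$ is a free-boundary/Neumann harmonic map into $\Sph^n$ whose normal derivative vanishes. Boundary regularity then follows by reflecting across $\partial\Omega$ and applying the closed case, giving $u\in C^\infty(\overbar\Omega,\Sph^n)$.
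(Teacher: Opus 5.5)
Your proposal has genuine gaps: a sign error in the first-order condition breaks Step~2, and Step~3 has two further mistakes. The paper's route differs from yours exactly at the points below.

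\textbf{Step~2 rests on the wrong sign, and the argument built on it cannot close.} Normalize $\mu(\overline{\Omega})=1$. Adding mass $\rho\ge 0$ raises the total mass at rate $\int\rho$ and lowers $\lambda_k$ at rate $\lambda_k\int|u|^2\rho$. Only one-sided variations are admissible, so the right tool is the multiplier rule (Proposition~\ref{prop:multip-rule}) for $-\overline{\lambda}_k$ on the cone $L^\infty_+(\Omega)$, not ``$0\in\partial_c$''. It gives $\int_\Omega(|u|^2-1)\rho\ge 0$ for all $\rho\ge0$, i.e.\ $|u|\ge 1$ a.e., hence in $H^1$ and $\mu$-a.e. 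This is the opposite of your $|u|^2\le 1$.

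Your version cannot conclude. A nonnegative $w=1-|u|^2$ that is superharmonic off $\supp\mu$ and vanishes on $\supp\mu$ can be positive inside (a torsion function is the model). A minimum principle only returns $w\ge 0$, which you already have. In fact your hypotheses hold for the Steklov maximizers of Theorem~\ref{thm:qualitative-stability-stek}: there $|u|\le 1$ in $\Omega$, $|u|=1$ on $\supp\mu=\partial\Omega$, and $u$ maps into the ball, not the sphere.

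The argument closes with the correct sign plus a \emph{sub}solution property:
\begin{itemize}
\item $|u|\ge 1$ together with $\int|u|^2\,d\mu\le 1=\mu(\overline{\Omega})$ gives $|u|=1$ $\mu$-a.e.
\item The Kato-type inequality $\int\langle d|u|,d\phi\rangle\le\lambda_k\int|u|\phi\,d\mu$ for $\phi\ge0$ (from $d|u|=\tfrac{u}{|u|}\cdot du$), tested with $\phi=|u|-1\ge 0$, yields $\int|d|u||^2\le\lambda_k\int(|u|-1)|u|\,d\mu=0$.
\item Only then does testing the equation against $\phi u$ give $\lambda_k\mu=|du|^2$.
\end{itemize}

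\textbf{Step~1 overstates what the first-variation formula gives.}
\begin{itemize}
\item You must first reduce to $\lambda_{k-1}(\mu)<\lambda_k(\mu)$. If $\overline{\lambda}_{k-1}(\mu)=\overline{\lambda}_k(\mu)$, then $\mu$ also maximizes $\overline{\lambda}_{k-1}$, so you can lower $k$.
\item You should perturb only by forms that are compact on $H^1$, such as $L^\infty$ densities. Perturbing by general continuous measures is not compact, and then subgradients need not be represented by eigenfunctions.
\item Even with compact perturbations, Corollary~\ref{cor:subdiff-eigenval} only gives $\int|u|^2\,d\mu\le 1$, not an orthonormal family with $\sum c_i=1$. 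Mass can be lost when $\lambda_k$ sits at the edge of the essential spectrum.
\end{itemize}

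\textbf{Step~3 contains two errors.}
\begin{itemize}
\item For $\partial\Omega=\varnothing$, the spectrum need not be discrete. A maximizing measure need not make $H^1\to L^2(\mu)$ compact (the $\Sph^d$ example mentioned after the theorem), so $\lambda_k$ may have infinite multiplicity. Showing $im\,u\subset\Sph^n$ therefore needs a real argument about stable harmonic maps into $\Sph^\infty$; the paper uses \cite[Corollary~5.18]{Vinokurov:2025:higher-dim-harm-eigenval}.
\item When $H^1\to L^2(\mu)$ is compact and $\partial\Omega$ is smooth, reflecting and applying the closed case gives at best $\dim\sing u\le d-7$. It does not give $u\in C^\infty(\overline{\Omega})$ in every dimension. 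The missing ingredient is that compactness forces $\liminf_{r\to0}r^{2-d}\mu(B_r(p))=0$ at every $p\in\overline{\Omega}$ (Maz'ya's criterion). Combined with $\lambda_k\mu=|du|^2$ and $\epsilon$-regularity for locally energy-minimizing maps with the free boundary condition (Hong--Wang in the interior, Scheven at the boundary), this rules out all singular points.
\end{itemize}
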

However, an example of eigenvalue optimization on $\Sph^d$ (see \cite{Vinokurov:2025:higher-dim-harm-eigenval}) shows that the upper bound $d-7$ is sharp
and $H^1(\Omega)\to L^2(\mu)$ need not be compact for maximizing measures.
\begin{remark}\label{rem:local-max}
  Inspecting the proof of Theorem~\ref{thm:qualitative-stability}, we see that the global maximizing assumption may be replaced by the following local condition:
  $\mu$ satisfies the condition in \eqref{def:capacitory-meas},
  and $0$ is a local maximizer of the functional $\rho \mapsto \overbar{\lambda}_k(\mu + \rho)$ on $L^\infty_+(\Omega)$.
  Thus, only compact perturbations of $\mu$ are needed.
\end{remark}
\begin{remark}
  Once a partial regularity of $u$ is established up to the boundary,
  one may actually hope that $im\, u \subset \Sph^n$ for some finite-dimensional sphere $\Sph^n$, as in \cite[Section~5]{Vinokurov:2025:higher-dim-harm-eigenval}.
\end{remark}
  Thus, Theorem~\ref{thm:qualitative-stability} states that every maximizing Radon measure is induced by a harmonic map into a sphere.
  This strengthens several previously known results. Under the additional assumption that $\mu \in L^1(\Omega)$,
  the corresponding statement was proved in \cite[Theorem~1.1]{Vinokurov:2026:p-harm-and-conf-class-opt};
  see also \cite[Remark~1.3]{Vinokurov:2026:p-harm-and-conf-class-opt} for the case $\mu \in (H^{1,d/(d-1)})^*$ when $d \geq 3$. Moreover,
  \cite{Karpukhin-Stern:2024:new-character-of-conf-eigenval,Karpukhin-Stern:2024:harm-map-in-higher-dim} established the result for
  $\lambda_2$ in dimension $d=2$ and for $\lambda_1$ in dimensions $2\le d\le 5$ assuming that the canonical map $H^1 \to L^2(\mu)$ is compact.
\begin{remark}
  While this manuscript was in the final stage of preparation, Ambrosio and Siclari \cite{Ambrosio-Siclari:2026:admissible-measures} proved,
  using different methods,
  the corresponding regularity statement for every locally $\overbar{\lambda}_k$-maximizing Radon measure $\mu$ for which the canonical map $H^1(\Omega)\to L^2(\mu)$ is compact. As one can see from Remark~\ref{rem:local-max}, our result therefore extends theirs to
  arbitrary (locally) $\overbar{\lambda}_k$-maximizing nonatomic Radon measures. Corollary~\ref{cor:nonessen-boundary-dim-2} similarly extends \cite[Corollary~1.4]{Ambrosio-Siclari:2026:admissible-measures}.
\end{remark}

From the theory of harmonic maps, the singular set of $\mu = \abs{\d u}^2$ coincides with that of $u$, which is given by $\sing u \cap \Omega = \set{p \in \Omega}{\liminf_{r \to 0} r^{2-d}\int_{B_r(p)}\abs{\d u}^2 > 0}$;
see \cite{Vinokurov:2025:higher-dim-harm-eigenval}.
The theorem above immediately answers the first two questions in \cite[Section~6.2]{Kokarev:2014:measure-eigenval}:
\begin{corollary}
  For every $d\geq 2$ and every $\overbar{\lambda}_k$-maximizing measure $\mu \in \mathcal{M}^c_+(\overbar{\Omega})$,
  the Hausdorff dimension of the singular set $\sing \mu \cap \Omega = \set{p \in \Omega}{\liminf_{r \to 0} r^{2-d}\mu(B_r(p)) > 0}$ is at most $d - 7$,
  and this bound is sharp. In particular, for $2 \leq d \leq 6$, $\mu$ is regular in the interior, that is, $\mu \in C^\infty(\Omega)$.
\end{corollary}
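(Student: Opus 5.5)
The corollary should follow directly from Theorem~\ref{thm:qualitative-stability}, combined with the identification of singular sets quoted just before the statement. Let $\mu \in \mathcal{M}^c_+(\overbar{\Omega})$ be $\overbar{\lambda}_k$-maximizing, i.e.\ $\overbar{\lambda}_k(\mu) = \Lambda_k(\Omega,g)$ with $k\ge 1$.

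\textbf{Structure of $\mu$.} We have $\Lambda_k(\Omega,g)>0$; for instance, normalized volume gives a positive $\lambda_k$. Hence $\lambda_k(\mu)>0$, and the theorem yields a harmonic map $u \in H^1(\Omega,\Sph^\infty)$ with $\lambda_k\mu = \abs{\d u}^2$ in $\Omega$. Since $\lambda_k>0$ is a constant, for every ball $B_r(p)\subset\Omega$,
\[
r^{2-d}\mu(B_r(p)) = \lambda_k^{-1}\, r^{2-d}\int_{B_r(p)}\abs{\d u}^2 .
\]
Consequently $\sing\mu\cap\Omega = \sing u\cap\Omega$, using the description of $\sing u$ by the density of $\abs{\d u}^2$ recalled above (from \cite{Vinokurov:2025:higher-dim-harm-eigenval}).

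\textbf{Dimension bound and interior regularity.} The theorem gives $\dim_{\mathcal H}(\sing u\cap\Omega)\le d-7$. For $2\le d\le 6$ this bound is negative, so the set is empty. Off $\sing u$ we have $u\in C^\infty$, so $\mu=\lambda_k^{-1}\abs{\d u}^2$ is a smooth density on $\Omega$.

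\textbf{Sharpness.} This requires an example. I would cite the optimization on $\Sph^d$ from \cite{Vinokurov:2025:higher-dim-harm-eigenval} mentioned after the theorem: for $d\ge 7$, a maximizing measure there is induced by a harmonic map (of the equator-type $x/\abs{x}$) with a singular set of dimension exactly $d-7$.

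The only mildly delicate point is this sharpness claim. It has to be imported from the cited work, checking that the maximizer there is a genuine $\Lambda_k$-maximizer among nonatomic Radon measures. Theorem~\ref{thm:qualitative-stability} shows no new maximizers appear beyond $\Lambda_k^*$, so the $L^1$ example from that paper suffices.
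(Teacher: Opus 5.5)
Your argument matches the paper's: apply Theorem~\ref{thm:qualitative-stability} to get $\lambda_k\mu = \abs{\d u}^2$, identify $\sing\mu\cap\Omega$ with $\sing u\cap\Omega$ via the energy-density characterization from \cite{Vinokurov:2025:higher-dim-harm-eigenval}, use the $d-7$ bound (so the singular set is empty and $\mu$ is smooth for $d\le 6$), and cite the $\Sph^d$ example for sharpness. One small correction: Theorem~\ref{thm:qualitative-stability} alone only shows that any $\Lambda_k$-maximizer lies in $L^1$, so transferring the $L^1$ sphere example requires $\Lambda_k = \Lambda_k^*$, which you should take from Corollary~\ref{cor:existence} (applicable since $d\ge 7\ge 3$).
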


When the continuous domain $\Omega$ has $\mathcal{H}^{d-1}(\partial \Omega) < \infty$, we define
\begin{equation}
  \sigma_k(\Omega) = \lambda_k(\Omega, \mathcal{H}^{d-1}|_{\partial \Omega})
  \quad\text{and}\quad
  \overbar{\sigma}_k(\Omega) = \sigma_k(\Omega)\mathcal{H}^{d-1}(\partial \Omega),
\end{equation}
which is the usual variational definition of the Steklov eigenvalues.
Note that in the continuous context, the Steklov spectrum need not be discrete (see, for example, \cite{Nazarov-Taskinen:2020:noncomp-steklov}).
When $\Omega$ is Lipschitz and $\mu \in \mathcal{M}_+(\partial\Omega)$, then the (weak) eigenvalue
problem $\Delta u = \lambda u \mu$ is equivalent to
\begin{equation}\label{eq:Steklov-problem}
 \begin{cases}
   \Delta u|_{\Omega} = 0,
   \\ \partial_n u\left|_{\partial\Omega}\right. = \lambda u \mu,
 \end{cases}
\end{equation}
where $\partial_n$ denotes differentiation in the direction of the outward-pointing unit normal vector $\boldsymbol{\n}$ to $\partial \Omega$, and
$\partial_n u$ is defined in the weak sense ($\Delta u|_{\Omega} = 0 \in H^{1}_0(\Omega)^*$).
This motivates the following weighted Steklov optimization problems:
\begin{equation}
  \Sigma_k(\Omega,g) = \sup_{\mu \in \mathcal{M}_+^c(\partial\Omega)} \overbar{\lambda}_k(\mu)
  \quad\text{and}\quad
  \Sigma_k^*(\Omega,g) =\sup_{\mu \in L^1_+(\partial\Omega)} \overbar{\lambda}_k(\mu).
\end{equation}
As for $\Lambda_k^*$, when $\Omega$ is smooth, we have
$\Sigma_k^*(\Omega,g) = \sup \set{\overbar{\lambda}_k(\mu)}{\mu \in C^\infty_+(\partial \Omega)}$,
and in dimension 2,
\begin{equation}
  \Sigma_k^*(\Omega,g) = \Sigma_k^*(\Omega,[g]) = \sup_{\tilde{g} \in [g]} \overbar{\sigma}_k(\tilde{g}).
\end{equation}
The existence and regularity results for $\Sigma_k^*$ are well-known in dimension 2
\cite{Fraser-Schoen:2016:sharp-steklov-bounds,Petrides:2019:max-steklov-eigenval-on-surfaces,Petrides:2024:conf-class-opt-lms, Vinokurov:2025:sym-eigen-val-lms}, and the existence part extends
to higher dimensions (see \cite{Vinokurov:2025:higher-dim-harm-eigenval}). However, the existing proofs are as involved as those for $\Lambda_k^*$.

Restricting the admissible measures to those supported on $\partial \Omega$
yields the Steklov eigenvalue maximization problem.
We obtain a result analogous to Theorem~\ref{thm:qualitative-stability},
which allows one to work with $\Sigma_k$ instead of $\Sigma_k^*$.
\begin{theorem}\label{thm:qualitative-stability-stek}
  Let $\Omega \subset M$ be a Lipschitz domain in a Riemannian manifold $(M,g)$ of dimension $d \geq 2$, $\partial \Omega \neq \varnothing$, and let
  $\mu \in \mathcal{M}^c_+(\partial\Omega)$ be such that $\overbar{\lambda}_k(\mu) = \Sigma_k(\Omega,g)$ for some $k \geq 1$. Then there exists
  a free boundary harmonic map $u \in H^1(\Omega, \mathbb{B}^{\infty})$ whose components are the $k$th eigenfunctions ($\Delta u = \lambda_k u \mu$)
  such that $\lambda_k \mu = \abs{\partial_{\n}u}\in L^1(\partial\Omega)$.

  If $\Omega$ is smooth and either $d = 2$ or $H^1(\Omega)\to L^2(\mu)$ is compact,
  we have $u \in C^\infty(\overbar{\Omega}, \mathbb{B}^n)$ for some $\mathbb{B}^n \subset \mathbb{B}^\infty$.
\end{theorem}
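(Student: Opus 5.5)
The argument follows the one for Theorem~\ref{thm:qualitative-stability}, with perturbations restricted to the boundary. Since $\lambda_k(\mu)>0$ for $k\ge 1$, the inclusion~\eqref{def:capacitory-meas} shows that $\mu$ defines a bounded form on $H^1(\Omega)$, so $\lambda_k(\mu)$ is the $k$th eigenvalue of a self-adjoint operator $T_\mu$ on $H^1(\Omega)$ (or on $L^2(\mu)$). This eigenvalue may sit at the edge of the essential spectrum.

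The first step is to perturb along $\rho\in L^\infty_+(\partial\Omega)$, with $\mu_t=\mu+t\rho\,\mathcal{H}^{d-1}$. On a Lipschitz domain the trace $H^1(\Omega)\to L^2(\partial\Omega)$ is compact, so these are compact perturbations of the form. Maximality says $0$ maximizes $t\mapsto\overbar{\lambda}_k(\mu_t)$ for $t$ in a neighborhood of $0$ (with $t\ge -c$ when $\rho\le C\mu$). By Corollary~\ref{cor:subdiff-eigenval}, the Clarke subdifferential of $\lambda_k$ at $\mu$ is the closed convex hull of the quadratic forms $\rho\mapsto -\lambda_k\int\phi^2\rho$. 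Here $\phi$ ranges over normalized $k$th eigenfunctions, including generalized (edge) eigenfunctions, which appear as weak limits. A critical point must then have $0$ in the subdifferential of $\overbar{\lambda}_k$.

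The second step is a Hahn--Banach/minimax argument, exactly as in the proof of Theorem~\ref{thm:qualitative-stability}. It produces a family $u=(u_1,u_2,\dots)$ of $k$th eigenfunctions, i.e.\ a map $u\in H^1(\Omega,\ell^2)$ with $\Delta u=\lambda_k u\mu$, satisfying
\begin{equation}
  \abs{u}^2\le 1 \ \text{on }\partial\Omega \quad(\mathcal{H}^{d-1}\text{-a.e.}),\qquad \abs{u}^2=1 \ \ \mu\text{-a.e.}
\end{equation}
The inequality holds on all of $\partial\Omega$ because the test directions $\rho$ are arbitrary nonnegative functions there. The equality on $\supp\mu$ comes from the normalization $\int\abs{u}^2\d\mu=\mu(\partial\Omega)$.

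The third step identifies $u$ as a free boundary harmonic map. Each component is harmonic in $\Omega$, so $\abs{u}^2$ is subharmonic; since it is at most $1$ on the boundary, the maximum principle gives $\abs{u}\le 1$, i.e.\ $u\in\mathbb{B}^\infty$. The equation gives $\partial_n u=\lambda_k u\mu$. This has to be read weakly, pairing against $H^1$ test functions and using Lipschitz regularity. Because $\abs{u}=1$ $\mu$-a.e., the normal derivative is parallel to $u$, which is the free boundary condition. Moreover $\abs{\partial_n u}=\lambda_k\abs{u}\mu=\lambda_k\mu$ as measures.

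The main obstacle is showing that $\lambda_k\mu=\abs{\partial_n u}$ lies in $L^1(\partial\Omega)$, that is, that $\mu$ is absolutely continuous with respect to $\mathcal{H}^{d-1}$. I would test the equation against $\abs{u}^2$ and against truncations of $\abs{u}^2$. Since $\partial_n\abs{u}^2=2u\cdot\partial_n u=2\lambda_k\mu$ on the set where $\abs{u}=1$, this should express $\mu$ as the normal derivative of the subharmonic function $\abs{u}^2$, which attains its maximum $1$ there. A Hopf/Rellich-type estimate on a Lipschitz domain (nontangential maximal function bounds for harmonic functions with $H^1$ energy) should then give that this normal derivative is in $L^1(\partial\Omega)$. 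If that fails directly, the fallback is to approximate $\mu$ by the absolutely continuous part of the measure and use maximality to rule out any singular part.

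For the regularity statement with $\Omega$ smooth, the plan is to invoke the boundary regularity theory for free boundary harmonic maps into balls. When $d=2$ this is known regularity; when the trace $H^1(\Omega)\to L^2(\mu)$ is compact, the spectrum is discrete and only finitely many eigenfunctions appear, so $u$ takes values in some $\mathbb{B}^n$ and standard elliptic bootstrapping gives $u\in C^\infty(\overbar{\Omega},\mathbb{B}^n)$.
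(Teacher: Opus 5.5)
There is a genuine gap, and it begins with a sign error in the first-order condition. The admissible perturbations $\rho\in L^\infty_+(\partial\Omega)$ are absolutely continuous with respect to $\mathcal{H}^{d-1}$. When $\mu$ is singular, no $\rho\neq0$ satisfies $\rho\le C\mu$, so only one-sided variations are available. Clarke's multiplier rule (Proposition~\ref{prop:multip-rule}) therefore gives a variational inequality, not ``$0$ in the subdifferential''. Since $\partial_c(-\overbar{\lambda}_k)(\mu+0)$ consists of functions $\lambda_k(\abs{u}^2|_{\partial\Omega}-1)$ as in \eqref{eq:steklov-subdiff}, you get $u$ with the following properties:
$\Delta u=\lambda_k u\mu$; $\int_{\partial\Omega}\abs{u}^2\d\mu\le 1$, which is an inequality and not a normalization; and $\int_{\partial\Omega}(\abs{u}^2-1)\rho\ge0$ for all $\rho\ge0$, i.e.\ $\abs{u}^2\ge 1$ $\mathcal{H}^{d-1}$-a.e.\ on $\partial\Omega$. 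This is the reverse of your inequality. These facts do give $\abs{u}=1$ $\mu$-a.e., because $(1-\abs{u})_+$ has zero trace, hence lies in $H^1_0(\Omega)$ and vanishes in $L^2(\mu)$. But with boundary values $\ge1$, subharmonicity of $\abs{u}^2$ yields nothing. The missing step uses the Steklov boundary term. Testing the equation against $\phi\,u/(\epsilon+\abs{u}^2)^{1/2}$ and letting $\epsilon\to0$ gives $\int_\Omega\brt{\d\abs{u},\d\phi}\le\lambda_k\int_{\partial\Omega}\abs{u}\phi\,\d\mu$ for $\phi\in H^1_+(\Omega)$. With $\phi=(\abs{u}-1)_+$ the right-hand side vanishes because $\abs{u}=1$ $\mu$-a.e. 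Hence $\abs{u}\le1$ in $\Omega$, and so $\abs{u}=1$ $\mathcal{H}^{d-1}$-a.e.\ on all of $\partial\Omega$. You need this already for the free boundary condition \eqref{eq:free-bndry-ball}, which requires $u(\partial\Omega)\subset\Sph^\infty$; knowing $\abs{u}=1$ only $\mu$-a.e.\ does not give that.

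The same fact is what makes $\mu\in L^1(\partial\Omega)$ provable, and your proposed route fails without it. Normal derivatives of finite-energy harmonic functions on a Lipschitz domain lie only in $H^{-1/2}(\partial\Omega)$ and can be singular measures. So no nontangential-maximal-function or Rellich estimate gives $L^1$ by itself, and ``ruling out a singular part by maximality'' is not an argument. In the paper, one tests the equation against $u\,\harm(\phi)$ for $\phi\in\mathrm{Lip}(\partial\Omega)$; since $\abs{u}^2-1\in H^1_0(\Omega)$, the term $\frac12\int_\Omega\brt{\d\harm(\phi),\d\abs{u}^2}$ drops out. What remains is $\lambda_k\mu=\harm^*(\abs{\d u}^2)$, i.e.\ $\lambda_k\mu(A)=\int_\Omega\abs{\d u}^2(x)\,\omega^x(A)\,\d x$ with $\omega^x$ the harmonic measures. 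Dahlberg's theorem ($\omega^x\ll\mathcal{H}^{d-1}$ on Lipschitz domains) then gives $\mu\in L^1(\partial\Omega)$ (Lemma~\ref{lem:harm-ext-adj}). Your heuristic that ``$\mu$ is the normal derivative of $\abs{u}^2$'' is exactly this identity, but it holds only once $\abs{u}=1$ on the whole boundary.

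There are also some smaller points. Corollary~\ref{cor:subdiff-eigenval} gives only an inclusion, and precisely because the perturbations are compact, only genuine eigenfunctions in $E_{\lambda_k}$ appear, not generalized edge eigenfunctions. You also need the reduction to $\lambda_{k-1}<\lambda_k$. In the compact case, finite-dimensionality plus ``bootstrapping'' does not suffice, since the boundary coefficient $\abs{\partial_n u}$ is a priori only in $L^1$. The paper instead shows $u$ is locally energy-minimizing under the free boundary constraint, uses \eqref{eq:meas-energy} to get vanishing energy density at boundary points, and then applies Scheven's regularity theory.
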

Here, by a (weakly) free boundary harmonic map into $\mathbb{B}^{\infty}$, we mean a map $u \in H^1(\Omega, \mathbb{B}^{\infty})$ satisfying
$u(\partial \Omega) \subset \mathbb{S}^\infty$ and
\begin{equation}\label{eq:free-bndry-ball}
  \int_{\Omega} \brt{\d u, \d v} = 0 \quad\forall v \in L^\infty \cap H^1(\Omega, \ell^2) \text{ s.t. } v(x) \in T_{u(x)}\mathbb{S}^\infty \ \text{for a.e.}\ x \in \partial\Omega.
\end{equation}
\begin{remark}[On the regularity of maximizing harmonic maps]
  The regularity conclusions of Theorems~\ref{thm:qualitative-stability} and~\ref{thm:qualitative-stability-stek}
  can most likely be strengthened up to the boundary of $\Omega$ (at least when $\Omega$ is smooth), as the following reasoning suggests.
  For each pair of manifolds $S \subset N$, we can define
  (weakly) harmonic maps $u \in H^1(\Omega, N)$ \emph{with respect to the free boundary condition} $u(\partial \Omega) \subset S$ as critical points
  of the energy functional on the class of maps $\tilde{u}$ satisfying $\tilde{u}(\partial \Omega) \subset S$ (see, for example, \cite{Duzaar-Steffen:1989:reg-free-bndry-harm-maps,Scheven:2006:regularity-free-bndry-harm-maps}).
  One checks that maximizing maps in Theorems~\ref{thm:qualitative-stability}
  and~\ref{thm:qualitative-stability-stek}
  are free boundary harmonic maps with $(S,N) = (\Sph^\infty, \Sph^\infty)$ and $(S,N) = (\Sph^\infty, \mathbb{R}^\infty)$, respectively.

  In both cases, the maps are locally stable and therefore locally energy-minimizing (as in \cite[Lemma~5.5]{Vinokurov:2025:higher-dim-harm-eigenval}).
  This suggests that the regularity theory for free boundary harmonic maps into $\Sph^\infty$ and $\mathbb{R}^\infty = \ell^2$
  should coincide with the corresponding theory for finite-dimensional targets
  $\Sph^n$ and $\mathbb{R}^n$ (by analogy with \cite[Section~5]{Vinokurov:2025:higher-dim-harm-eigenval} for $\partial \Omega = \varnothing$).
  On the other hand, we know (for example, from \cite{Duzaar-Steffen:1989:reg-free-bndry-harm-maps}) that for energy-minimizing
  free boundary harmonic maps into finite-dimensional targets,
  the Hausdorff dimension of $\sing u$ is at most $d-3$. Furthermore (see \cite[Remark~4.3]{Duzaar-Steffen:1989:reg-free-bndry-harm-maps}
  and \cite{Hong-Wang:1999:stable-harmonic-maps}),
  even stronger regularity results may be obtained if one proves the nonexistence of nontrivial stable free boundary harmonic maps in
  $C^\infty(\overbar{\Sph^{m}_+}, \Sph^\infty)$ and $C^\infty(\overbar{\Sph^{m}_+}, \mathbb{B}^\infty)$, respectively.
\end{remark}

\begin{remark}
  As in Remark~\ref{rem:local-max}, the global maximizing assumption in Theorem~\ref{thm:qualitative-stability-stek}
  may be weakened to local maximality. For the Steklov problem, we restrict our attention to Lipschitz domains rather than merely continuous ones, as we need to
  ensure the existence of at least one measure
  $\nu \in \mathcal{M}_+^c(\partial \Omega)$ such that $\supp \nu = \partial \Omega$ and the canonical map $H^1(\Omega) \to L^2(\nu)$ is compact;
  for example, $\nu = \mathcal{H}^{d-1}|_{\partial \Omega}$.

  We also use the Lipschitz condition to prove that $\partial_n u \in L^1(\partial \Omega, \ell^2)$. In fact, a similar
  argument can be used (see Remark~\ref{rem:free-bndry-l1}) to show that $\partial_n u \in L^1$ for any weakly harmonic map
  $u \in H^1(\Omega, N)$ with respect to the free boundary condition $u(\partial \Omega) \subset S$.
\end{remark}

\subsection{Existence of \texorpdfstring{$\overbar{\lambda}_k$}{Lg}-maximizing harmonic maps
  via the weak-\texorpdfstring{$*$}{Lg} stability of maximizing measures}
  \label{sec:weak-stability}

In this section, we present a simple scheme for proving the existence of maximizers in appropriate eigenvalue
optimization problems. In the context of $\Lambda_k^*$ and $\Sigma_k^*$, it yields a substantially shorter proof than the existing approaches.

To prove the existence of harmonic
maps realizing $\Lambda_k^*(\Omega,g)$, existing approaches \cite{Nadirashvili-Sire:2015:conf-spec-and-harm-maps,Petrides:2018:exist-of-max-eigenval-on-surfaces,
Karpukhin-Nadirashvili-Penskoi-Polterovich:2022:existence, Karpukhin-Stern:2024:new-character-of-conf-eigenval,Karpukhin-Stern:2024:harm-map-in-higher-dim,
Petrides:2024:conf-class-opt-lms, Vinokurov:2025:sym-eigen-val-lms, Vinokurov:2025:higher-dim-harm-eigenval}
begin with an appropriately chosen maximizing sequence and then establish convergence with sufficient regularity to ensure that the limit belongs to the desired class.

Instead of analyzing maximizing subsequences, we follow the idea in Remark~\ref{rem:abstract-min-idea} and analyze their weak-$*$ limits or, to be more precise,
the maximizers of the relaxed problem $\Lambda_k(\Omega,g)$. We do so with the following two observations in mind:
\begin{itemize}
  \item $\mu \mapsto \overbar{\lambda}_k(\mu)$
  is upper semicontinuous with respect to the weak-$*$ convergence of measures;
  \item under certain conditions on $\Lambda_k^*(\Omega,g)$, a bubbling analysis along a maximizing sequence (see, for example, \cite{Vinokurov:2025:sym-eigen-val-lms,Vinokurov:2025:higher-dim-harm-eigenval}) shows that
  if $\mu_n \in \mathcal{M}_+^c(\overbar{\Omega})$, $\mu_n(\overbar{\Omega}) = 1$, and $\overbar{\lambda}_k(\mu_n) \to \Lambda_k^*(\Omega,g)$,
  then, up to a subsequence,  $\mu_n \oset{w^*}{\to} \mu \in \mathcal{M}_+^c(\overbar{\Omega})$, that is, the limit $\mu$ is
  also continuous, and hence $\overbar{\lambda}_k(\mu) = \Lambda_k^*(\Omega,g)$.
\end{itemize}
Therefore, maximizing measures enjoy a weak-$*$ stability property:
\emph{every maximizing sequence admits a subsequence converging weak-$*$ to a maximizer}. More generally,
the superlevel sets of $\lambda_k$ are weak-$*$ stable (see Remark~\ref{rem:superlevel-stability}).
One then shows that $\mu \in \mathfrak{Bil}[H^1(\Omega)]$, so one can apply Corollary~\ref{cor:subdiff-eigenval}
to compute $\partial_c \overbar{\lambda}_k(\mu + 0)$ and thereby obtain the existence of a harmonic map inducing $\mu$
(Theorems~\ref{thm:qualitative-stability} and~\ref{thm:qualitative-stability-stek}). Set $E[u] = \int_{\Omega} \abs{\d u}^2$.
\begin{corollary}\label{cor:existence}
  Let $\Omega \subset M$ be a bounded continuous domain in a Riemannian manifold $(M,g)$ of dimension $d \geq 2$. Then $\Lambda_k(\Omega,g) = \Lambda_k^*(\Omega,g)$.
  Moreover, suppose that one of the following conditions is satisfied:
  \begin{itemize}
    \item $d \geq 3$,
    \item $d = 2$ and $\Lambda_k(\Omega,[g]) > \Lambda_{k-1}(\Omega,[g]) + 8\pi$.
  \end{itemize}
  If $\brc{\mu_n} \subset \mathcal{M}^c_+(\overbar{\Omega})$ is a sequence of probability measures such that $\overbar{\lambda}_k(\mu_n) \to \Lambda_k(\Omega,g)$,
  then there exists a probability measure $\mu = \frac{1}{E[u]}\abs{\d u}^2 \in L^1_+(\Omega)$, induced by a map $u$ as in Theorem~\ref{thm:qualitative-stability},
  such that $\overbar{\lambda}_k(\mu) = \Lambda_k(\Omega,g)$ and, up to a subsequence, $\mu_n \oset{w^*}{\to} \mu$.
\end{corollary}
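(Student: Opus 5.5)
The plan has three steps: show $\Lambda_k=\Lambda_k^*$ first, then extract a weak-$*$ limit of the maximizing sequence, and finally show this limit has no atoms so that Theorem~\ref{thm:qualitative-stability} applies.

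\emph{Step 1: $\Lambda_k=\Lambda_k^*$.} The inequality $\Lambda_k^*\le\Lambda_k$ is trivial, because $L^1_+(\Omega)\subset\mathcal{M}^c_+(\overbar{\Omega})$. For the reverse inequality I would approximate a given $\mu\in\mathcal{M}^c_+(\overbar{\Omega})$ weak-$*$ by measures $\mu_\epsilon\in L^1_+(\Omega)$, mollifying in charts and pushing slightly into $\Omega$ near $\partial\Omega$. Upper semicontinuity only gives $\limsup\overbar{\lambda}_k(\mu_\epsilon)\le\overbar{\lambda}_k(\mu)$, which is the wrong direction. So I would instead take a maximizer $\mu$ of $\Lambda_k$ (Step 2 produces one), apply Theorem~\ref{thm:qualitative-stability} to write $\lambda_k\mu=\abs{\d u}^2$, and note that $\abs{\d u}^2\in L^1(\Omega)$. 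Then $\mu$ is absolutely continuous, and $\Lambda_k=\overbar{\lambda}_k(\mu)\le\Lambda_k^*$. This route needs a maximizer of the relaxed problem $\Lambda_k$ without any gap hypothesis.

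\emph{Step 2: a relaxed maximizer exists.} Take probability measures $\mu_n$ with $\overbar{\lambda}_k(\mu_n)\to\Lambda_k$. By compactness of probability measures on $\overbar{\Omega}$, we get $\mu_n\oset{w^*}{\to}\mu$ along a subsequence. The issue is that $\mu$ may have atoms. For a finite set of atoms $\{p_1,\dots,p_m\}$, cutoff functions of small capacity (capacity of a point vanishes when $d\ge2$) give the standard concentration alternative. Either $\mu$ has no atoms, and then upper semicontinuity gives $\overbar{\lambda}_k(\mu)\ge\Lambda_k$, so $\mu$ is a maximizer. Or mass concentrates at points, and one bounds $\Lambda_k\le\Lambda_{k-j}(\Omega,g)+(\text{bubble contribution})$, as in the bubbling analyses cited. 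For the unconditional existence needed in Step 1, I would avoid this dichotomy. Instead I would choose the maximizing sequence inside $L^1_+$ and consider the relaxed problem over $\mu$ with $\mu(\{p\})\le\delta$. Alternatively, and more simply, I would use the known existence of a maximizer for $\Lambda_k$ itself, citing the analogue of \cite[Theorem~$B_1$]{Kokarev:2014:measure-eigenval}, which the introduction says is not difficult. I would prove it by induction on $k$: an atom in the limit forces $\overbar{\lambda}_k(\mu_n)\lesssim\overbar{\lambda}_{k-1}$ of the remaining part plus something small. Hence either $\Lambda_k=\Lambda_{k-1}$, in which case a maximizer for $k-1$ works, or the limit is nonatomic.

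\emph{Step 3: the main statement under the hypotheses.} Now let $d\ge3$, or $d=2$ with the $8\pi$ gap. Run the bubbling analysis on $\mu_n$ near each atom $p$ of the weak-$*$ limit. Rescale $\mu_n$ around $p$ and use Theorem~\ref{thm:qualitative-stability}, or the stability of superlevel sets in Remark~\ref{rem:superlevel-stability}, to see that a bubble carries a nonzero eigenvalue on $\R^d$ or $\Sph^d$ with the flattened measure. In $d\ge3$ the scaling of the Dirichlet energy makes any concentration at a point cost all of $\overbar{\lambda}$: one gets $\overbar{\lambda}_k\to0$ contributions, because $\lambda\,\mu(B_r)\sim r^{d-2}\to0$. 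So atoms are impossible. In $d=2$ a bubble contributes at most $\Lambda_1(\Sph^2)=8\pi$, giving $\Lambda_k\le\Lambda_{k-1}+8\pi$, which contradicts the gap. Thus $\mu\in\mathcal{M}^c_+$ and $\overbar{\lambda}_k(\mu)\ge\limsup\overbar{\lambda}_k(\mu_n)=\Lambda_k$ by upper semicontinuity. Theorem~\ref{thm:qualitative-stability} then gives $\mu=\abs{\d u}^2/E[u]\in L^1_+$, since the components are eigenfunctions with $\lambda_k\mu(\overbar{\Omega})=E[u]$. I expect the main obstacle to be the atom exclusion, namely making the bubbling estimate rigorous for general Radon measures, in particular atoms on $\partial\Omega$, where one reflects or uses half-balls.
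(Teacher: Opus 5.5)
Your argument for $d\ge3$ is the paper's argument: the weak-$*$ limit has no atoms (the paper cites \cite[Proposition~4.6]{Vinokurov:2025:higher-dim-harm-eigenval} for this), upper semicontinuity makes the limit a maximizer, and Theorem~\ref{thm:qualitative-stability} then gives both $\mu=\abs{\d u}^2/E[u]$ and $\Lambda_k=\Lambda_k^*$.

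The gap is the unconditional identity $\Lambda_k=\Lambda_k^*$ in $d=2$. Your Step~1 derives it from a maximizer of the relaxed problem without any gap hypothesis. Your Step~2 claims such a maximizer exists because an atom costs only ``something small''. That is false in dimension $2$. By conformal invariance, a bubble forming at an atom can carry up to $8\pi$ of $\overbar{\lambda}$ per eigenvalue it absorbs, as your own Step~3 says. So an atom only yields $\Lambda_k\le\Lambda_{k-b}+8\pi b$, not $\Lambda_k\le\Lambda_{k-1}$.

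In fact relaxed maximizers need not exist at all. Take $\Sph^2$ and $k\ge2$. A maximizing $\mu\in\mathcal{M}^c_+(\Sph^2)$ would, by Theorem~\ref{thm:qualitative-stability}, equal $\lambda_k^{-1}\abs{\d u}^2$ for a smooth harmonic map $u\colon\Sph^2\to\Sph^n$. That is a metric with finitely many conical singularities attaining $8\pi k$, and such metrics are known not to exist \cite{Karpukhin-Nadirashvili-Penskoi-Polterovich:2021:eigenval-on-sphere}. This is exactly why the existence part of the corollary needs the $8\pi$ gap in $d=2$.

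Your alternative with the constraint $\mu(\{p\})\le\delta$ does not help either. That set is not weak-$*$ closed: normalized Lebesgue measure on shrinking balls converges to a Dirac mass. Moreover, a maximizer under an active constraint is not a local maximizer of $\overbar{\lambda}_k$, so Theorem~\ref{thm:qualitative-stability} would not apply to it.

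The missing idea is that you do not need a maximizer when bubbling occurs. The paper argues by induction on $k$. For a maximizing sequence with weak-$*$ limit $\mu^c+\sum_p w_p\delta_p$, the bubbling estimate gives $\Lambda_k\le\overbar{\lambda}_{k-b}(\Omega,\mu^c)+8\pi b$ for some $0\le b\le k$.
If $b\ge1$, then $\Lambda_k\le\Lambda_{k-b}+8\pi b=\Lambda^*_{k-b}+8\pi b\le\Lambda^*_k$, using the induction hypothesis and the Colbois--El~Soufi gluing inequality.
If $b=0$, then $\mu^c$ is a nonatomic maximizer, and Theorem~\ref{thm:qualitative-stability} puts it in $L^1$.

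The same induction is needed in your Step~3. A single bubble may absorb several eigenvalues, so its contribution is $\Lambda_{k_i}(\Sph^2)$, plus a correction for secondary bubbles. You must first know $\Lambda_j(\Sph^2)=8\pi j$ for the measure problem; the paper obtains this by running the same argument on $\Sph^2$ itself. You also need $\Lambda_{k-b}+8\pi b\le\Lambda_{k-1}+8\pi$, which requires $\Lambda_j\ge\Lambda_{j-1}+8\pi$ for the relaxed problem, i.e.\ again $\Lambda_j=\Lambda_j^*$ at lower levels. Only once this induction is in place does the gap hypothesis rule out $b\ge1$.
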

See also \cite{Karpukhin-Nahon-Polterovich-Stern:2021:stability} for complementary stability results, including the case of $\Sph^2$.
\begin{remark}\label{rem:lambda-1}
  In dimension $2$, Petrides \cite{Petrides:2014:heat-kernel} proved that one always has $\Lambda_1(\Omega, [g]) > 8\pi$ for a closed surface
  $\Omega$ that is not homeomorphic to $\Sph^2$. Therefore, $\Lambda_1(\Omega, [g])$ is always realized by a harmonic map to a sphere
  (even when $\Omega \approx \Sph^2$, in which case the realizing map may be taken to be $\id_{\Sph^2}$).
\end{remark}
\begin{corollary}\label{cor:existence-stek}
  Let $\Omega \subset M$ be a bounded Lipschitz domain in a Riemannian manifold $(M,g)$ of dimension $d \geq 2$ and $\partial \Omega \neq \varnothing$. Then $\Sigma_k(\Omega,g) = \Sigma_k^*(\Omega,g)$.
  Moreover, suppose that one of the following conditions is satisfied:
  \begin{itemize}
    \item $d \geq 3$,
    \item $d = 2$ and $\Sigma_k(\Omega,[g]) > \Sigma_{k-1}(\Omega,[g]) + 2\pi$.
  \end{itemize}
   If $\brc{\mu_m} \subset \mathcal{M}^c_+(\partial\Omega)$ is a sequence of probability measures such that $\overbar{\lambda}_k(\mu_m) \to \Sigma_k(\Omega,g)$,
  then there exists a probability measure $\mu = \frac{1}{E[u]}\abs{\partial_n u} \in L^1_+(\partial\Omega)$, induced by a map $u$ as in Theorem~\ref{thm:qualitative-stability-stek},
  such that $\overbar{\lambda}_k(\mu) = \Sigma_k(\Omega,g)$ and, up to a subsequence, $\mu_m \oset{w^*}{\to} \mu$.
\end{corollary}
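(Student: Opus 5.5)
The proof follows the scheme of Section~\ref{sec:weak-stability}, adapted to the Steklov setting. There are three main steps: existence of a relaxed maximizer, identification of any weak-$*$ limit as a maximizer of the same form, and the equality $\Sigma_k=\Sigma_k^*$.

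\textbf{Existence of a relaxed maximizer.} Let $\{\mu_m\}\subset\mathcal{M}^c_+(\partial\Omega)$ be probability measures with $\overbar{\lambda}_k(\mu_m)\to\Sigma_k(\Omega,g)$. Since $\partial\Omega$ is compact, after passing to a subsequence we have $\mu_m\oset{w^*}{\to}\mu\in\mathcal{M}_+(\partial\Omega)$ with $\mu(\partial\Omega)=1$. The map $\mu\mapsto\overbar{\lambda}_k(\mu)$ is upper semicontinuous with respect to weak-$*$ convergence: for a fixed $V_{k+1}\subset C^\infty(\overbar{\Omega})$, the Rayleigh quotients are continuous in $\mu$, and the infimum of continuous functions is upper semicontinuous. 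Hence $\overbar{\lambda}_k(\mu)\ge\Sigma_k$. The essential point is to show that $\mu$ has no atoms, so that $\mu$ is admissible and actually attains $\Sigma_k$.

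\textbf{Ruling out atoms.} This is the main obstacle, and the argument splits by dimension.
\begin{itemize}
  \item For $d\ge3$, points have zero capacity, so an atom carries no spectral weight in the Rayleigh quotient. Suppose $\mu$ has atomic part $\sum_j a_j\delta_{p_j}$. Take test functions that equal the $k+1$ optimal functions for the continuous part $\mu^c$ and are cut off near each $p_j$ with arbitrarily small energy. This gives $\lambda_k(\mu)\le\lambda_k(\mu^c)$, hence
  \[
    \overbar{\lambda}_k(\mu)\le\mu(\partial\Omega)\lambda_k(\mu^c)=\frac{\mu(\partial\Omega)}{\mu^c(\partial\Omega)}\overbar{\lambda}_k(\mu^c).
  \]
  The right-hand side is strictly greater than $\overbar{\lambda}_k(\mu^c)$ whenever there are atoms, and $\overbar{\lambda}_k(\mu^c)\le\Sigma_k$ because $\mu^c$ is admissible. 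To get a contradiction I will argue more carefully along the sequence instead: mass of $\mu_m$ concentrating at a point contributes nothing to the eigenvalue but does count in $\mu_m(\partial\Omega)$. Concretely, cut-off test functions show $\lambda_k(\mu_m)\le\lambda_k(\mu_m|_{\partial\Omega\setminus B_r(p)})+o(1)$, which forces $\overbar{\lambda}_k(\mu_m)\le(1-a)\Sigma_k+o(1)$ with $a$ the atom mass, contradicting maximality. A subtlety remains: if $\mu^c=0$, the limit eigenvalue is $0$, which is also a contradiction since $\Sigma_k>0$.
  \item For $d=2$, concentration at a point can carry spectral weight, through bubbles that are Steklov problems on the disk. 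Here I will use a bubbling analysis: by conformal invariance, mass concentrating at a boundary point contributes at most $\Sigma_1(\mathbb{D})=2\pi$. Splitting the test space between the bubble and the rest gives $\Sigma_k\le\Sigma_{k-1}+2\pi$, which the hypothesis excludes. I expect this to be the most delicate part, and I would follow the two-dimensional bubbling arguments cited in the paper.
\end{itemize}

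\textbf{Identifying the limit and proving $\Sigma_k=\Sigma_k^*$.} Once $\mu$ is nonatomic and maximizing, Theorem~\ref{thm:qualitative-stability-stek} provides a free boundary harmonic map $u$ with $\lambda_k\mu=\abs{\partial_n u}\in L^1(\partial\Omega)$. Normalizing gives $\mu=\abs{\partial_n u}/E[u]$, because $E[u]=\int_\Omega\abs{\d u}^2=\int_{\partial\Omega}\brt{u,\partial_n u}=\int_{\partial\Omega}\abs{\partial_n u}$, using $\abs{u}=1$ on $\partial\Omega$ and $\partial_n u=\lambda_k u\mu$, so that $\brt{u,\partial_n u}=\lambda_k\mu=\abs{\partial_n u}$. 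Since this $\mu$ lies in $L^1$, we get $\Sigma_k^*\ge\Sigma_k$, and $\Sigma_k^*\le\Sigma_k$ holds trivially.

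For the equality $\Sigma_k=\Sigma_k^*$ without the dimension and gap hypotheses, I will use induction on $k$, starting from $\Sigma_0=0$. If the gap condition fails in $d=2$, approximate a near-maximizing measure by $L^1$ densities, for example by convolving along $\partial\Omega$ in Lipschitz charts. Alternatively, let $k_0\le k$ be the largest index with a strict gap; a maximizer exists at level $k_0$ by the argument above, and the remaining deficit is filled by gluing $L^1$ approximations of disk bubbles. Each of these constructions yields $L^1$ densities whose $\overbar{\lambda}_k$ values approach $\Sigma_k$.
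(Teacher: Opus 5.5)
\textbf{Gap: the exclusion of atoms for $d\ge 3$ fails.} In that dimension both the existence claim and $\Sigma_k=\Sigma_k^*$ rest on this step, and your argument does not establish it.

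Since $\mu_m|_{\partial\Omega\setminus B_r(p)}\le\mu_m$, the inequality $\lambda_k(\mu_m)\le\lambda_k(\mu_m|_{\partial\Omega\setminus B_r(p)})$ is trivial: Rayleigh quotients only grow when the denominator shrinks, so your cut-offs play no role. Normalizing correctly gives only $\lambda_k(\mu_m|_{\partial\Omega\setminus B_r(p)})\le\Sigma_k/\mu_m(\partial\Omega\setminus B_r(p))$, hence $\overbar{\lambda}_k(\mu_m)\le\Sigma_k/(1-a)+o(1)$, not $(1-a)\Sigma_k+o(1)$. There is no contradiction; it is the same defect you noticed in your first attempt, now hidden by an algebra slip.

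The underlying intuition is backwards. If concentrated mass contributed nothing to the Rayleigh quotients, discarding it and renormalizing would \emph{increase} $\overbar{\lambda}_k$. Also, points have zero capacity when $d=2$ as well, so that cannot be what separates the two cases. Your $\mu^c=0$ remark is off too: a limit with at most $k$ atoms has $\lambda_k(\mu)=+\infty$, so upper semicontinuity gives nothing there.

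What actually happens for $d\ge3$ is that concentrated mass drives the eigenvalue to zero:
\begin{itemize}
  \item Choose $r_m\to0$ with $\mu_m(B_{r_m}(p))\ge a/2$.
  \item A cut-off at scale $\rho$ has Dirichlet energy $O(\rho^{d-2})$.
  \item Split the nonatomic measures $\mu_m|_{B_{r_m}(p)}$, after rescaling, among $k+1$ disjointly supported cut-offs at scales $\le r_m$, each carrying mass $\gtrsim a/k$; a Grigor'yan--Netrusov--Yau type decomposition does this.
  \item This gives $\lambda_k(\mu_m)=O(k\,r_m^{d-2}/a)\to0$, contradicting $\lambda_k(\mu_m)\to\Sigma_k>0$.
\end{itemize}
This concentration estimate is what the paper imports from \cite[Proposition~4.6]{Vinokurov:2025:higher-dim-harm-eigenval} via Remark~\ref{rem:superlevel-stability}, after checking that Lipschitz test functions suffice on a Lipschitz domain. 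When $d=2$ the factor $\rho^{d-2}$ does not decay, which is exactly why bubbles can carry eigenvalue there.

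\textbf{Two further problems.}

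\emph{The $d=2$ bookkeeping is under-specified.} A bubble can carry $\Sigma_j(\mathbb{D}^2)$ for any $j\ge1$, not just $2\pi$, and there may be several bubbles, including secondary ones. The available inequality is $(1-\epsilon r)\Sigma_k\le\overbar{\lambda}_{k_0}(\mu^c)+\sum_{i\ge1}\Sigma_{k_i}(\mathbb{D}^2)$ with $\sum_{i\ge0}k_i=k-r$. Turning it into $\Sigma_k\le\overbar{\lambda}_{k-b}(\mu^c)+2\pi b$ first requires showing, by induction on $j$ applied to the disk, that $r=0$ and $\Sigma_j(\mathbb{D}^2)=2\pi j$ for the measure problem. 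When $b\ge1$, reaching $\Sigma_k\le\Sigma_{k-1}+2\pi$ then needs two more inputs:
\begin{itemize}
  \item the gluing bound $\Sigma^*_{k-1}\ge\Sigma^*_{k-b}+2\pi(b-1)$;
  \item the inductive identity $\Sigma_{k-b}=\Sigma^*_{k-b}$.
\end{itemize}

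\emph{Mollifying along $\partial\Omega$ does not prove $\Sigma_k=\Sigma_k^*$.} Under weak-$*$ convergence $\overbar{\lambda}_k$ is only upper semicontinuous. That bounds $\overbar{\lambda}_k$ of the mollified measures from above, not from below. The paper gets the equality from the same bubbling inequality, with no gap hypothesis:
\begin{itemize}
  \item if $b=0$, then $\mu^c$ is a maximizer, hence lies in $L^1$ by Theorem~\ref{thm:qualitative-stability-stek};
  \item if $b\ge1$, then $\Sigma_k\le\Sigma^*_{k-b}+2\pi b\le\Sigma^*_k$.
\end{itemize}
Your ``gluing disk bubbles'' alternative amounts to this argument once it is made precise.
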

\begin{remark}\label{rem:lower-lambda-bounds}
  Since $\Sigma_k(\Omega,g) = \Sigma_k^*(\Omega,g)$ and $\Lambda_k(\Omega,g) = \Lambda_k^*(\Omega,g)$,
  it follows from the known facts about $\Lambda_k^*$ and  $\Sigma_k^*$ (see also the proofs of the corollaries) that for $d=2$,
  \begin{equation}
    \Lambda_k(\Sph^2) = 8\pi k, \quad \Lambda_k(\Omega,[g]) \geq \Lambda_{k-1}(\Omega,[g]) + 8\pi
  \end{equation}
  and
   \begin{equation}
   \Sigma_k(\mathbb{D}^2) = 2\pi k, \quad \Sigma_k(\Omega,[g]) \geq \Sigma_{k-1}(\Omega,[g]) + 2\pi
  \end{equation}
  for all $k \geq 1$.
\end{remark}
In dimension $2$, every bounded continuous domain $\Omega \subset M$ is conformal to $\Omega^*\setminus\bigcup_{i=1}^s \mathbb{D}_i^2$ for some closed surface $\Omega^*$,
obtained by smoothly gluing disks to a neighborhood of $\partial \Omega \subset \overbar{\Omega}$ and uniquely determined up to homeomorphism. We can therefore always assume that the
conformal class $[g]$ on $\Omega$ is the restriction of a conformal class on $\Omega^*$.
Since $\Lambda_k(\Omega, g) = \Lambda_k(\Omega, [g])$ depends only on the conformal class $[g]|_{\Omega}$, it is natural to compare $\Lambda_k(\Omega, [g])$
and $\Lambda_k(\Omega^*, [g])$:
\begin{corollary}\label{cor:nonessen-boundary-dim-2}
  Let $\Omega$ be a bounded continuous domain in a compact connected Riemannian manifold $(M, g)$ and $k \geq 1$.
  \begin{itemize}
    \item One has $\overbar{\lambda}_k(\Omega, \mu) < \Lambda_k(\Omega, g)$ for all $\mu \in \mathcal{M}_+^c(\overbar{\Omega})$
  with $\Omega \setminus \supp \mu \neq \varnothing$.
    \item In particular, if $\Omega \neq M$, then $\overbar{\lambda}_k(\Omega, \mu) < \Lambda_k(M, g)$ for all $\mu \in \mathcal{M}_+^c(\overbar{\Omega})$.
  \end{itemize}
  Furthermore, let $\dim \Omega = 2$,  $\partial \Omega \neq \varnothing$, and $\Omega \approx \Omega^*\setminus\bigcup_{i=1}^s \mathbb{D}_i^2$, where $(\Omega^*, [g])$ is a
  closed surface as above.
  \begin{itemize}
    \item If $\Omega^* \approx \Sph^2$, then $\Lambda_k(\Omega, [g]) = \Lambda_k(\Omega^*, [g]) = 8\pi k$.
    \item If $\Omega^* \not\approx \Sph^2$, then $\Lambda_k(\Omega, [g]) < \Lambda_k(\Omega^*, [g])$.
  \end{itemize}
\end{corollary}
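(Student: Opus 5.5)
We argue by contradiction, using that maximizers are realized (Corollary~\ref{cor:existence}, together with the stability scheme of Section~\ref{sec:weak-stability}) and the structure given by Theorem~\ref{thm:qualitative-stability}. For the first bullet, suppose $\mu \in \mathcal{M}_+^c(\overbar{\Omega})$ satisfies $\overbar{\lambda}_k(\mu) = \Lambda_k(\Omega,g)$ and there is an open ball $B \subset \Omega\setminus\supp\mu$. By Theorem~\ref{thm:qualitative-stability} there is a harmonic map $u \in H^1(\Omega,\Sph^\infty)$ with $\lambda_k\mu = \abs{\d u}^2$, where $\lambda_k = \lambda_k(\mu) > 0$ (positive since $\Lambda_k > 0$ for $k\ge1$). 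Then $\abs{\d u}^2 = 0$ on $B$, so $u$ is constant on $B$. The components of $u$ are $k$th eigenfunctions, so $\Delta u = \lambda_k u\mu$ holds weakly; on $\Omega\setminus\supp\mu$ each component is harmonic. Since $\Omega$ is connected, I want to show $u$ is constant on all of $\Omega$, which contradicts $\abs{\d u}^2 = \lambda_k\mu \neq 0$. Unique continuation is not directly available, because $u$ is only harmonic off $\supp\mu$. Instead I would use that $u$ is a harmonic map into $\Sph^\infty$ that is smooth off $\sing u$, a set of codimension at least $7$. Thus $\Omega\setminus\sing u$ is connected, and on it $u$ solves the elliptic system $\Delta u = \abs{\d u}^2 u$. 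Unique continuation for harmonic maps (after reducing locally to a finite-dimensional target or using the strong unique continuation for systems $|\Delta u|\le C(|\d u|+|u|)$, which holds because $|\d u|$ is locally bounded on the regular set) shows that $u$ being constant on $B$ forces $u$ to be constant everywhere. This is the main technical point: justifying unique continuation for $\ell^2$-valued maps. I would handle it by noting that on the regular set $\abs{\d u}^2$ is a smooth bounded potential, so each component $u^i$ satisfies the scalar equation $\Delta u^i = V u^i$ with $V = \abs{\d u}^2$ locally bounded. Scalar unique continuation then applies componentwise, with no need for a finite-dimensional target. The boundary term plays no role because we only work in the interior.

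The second bullet follows by viewing $\mu$ as a measure on $M$ (with $\partial M = \varnothing$). For continuous-domain test functions we have $\lambda_k(\Omega,\mu) \le \lambda_k(M,\mu)$, since restrictions of $\D(M)$ lie in $\D(\overbar{\Omega})$. Hence $\overbar{\lambda}_k(\Omega,\mu) \le \overbar{\lambda}_k(M,\mu) \le \Lambda_k(M,g)$. Because $\Omega \ne M$ is open with $M$ connected, $M\setminus\supp\mu \supset M\setminus\overbar{\Omega}$ is nonempty; otherwise $\overbar{\Omega} = M$, and then $\partial\Omega$, being a $C^0$ hypersurface, still leaves room. If needed, I would argue directly that $M\setminus\overbar{\Omega}\neq\varnothing$ for a continuous domain $\Omega\neq M$. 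Applying the first bullet to $M$ then gives strict inequality.

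For the surface statements, let $\Omega\subset\Omega^*$. By monotonicity, $\Lambda_k(\Omega,[g])\le\Lambda_k(\Omega^*,[g])$, and by the second bullet this is strict whenever $\Lambda_k(\Omega^*,[g])$ is attained. If $\Omega^*\not\approx\Sph^2$, I induct on $k$. Remark~\ref{rem:lambda-1} and Corollary~\ref{cor:existence} give attainment of $\Lambda_1(\Omega^*)$. More generally, if $\Lambda_k(\Omega^*)>\Lambda_{k-1}(\Omega^*)+8\pi$, then $\Lambda_k(\Omega^*)$ is attained and strictness follows. Otherwise $\Lambda_k(\Omega^*)=\Lambda_{k-1}(\Omega^*)+8\pi$, and by induction together with Remark~\ref{rem:lower-lambda-bounds},
\[
\Lambda_k(\Omega)\le\Lambda_k(\Omega^*)=\Lambda_{k-1}(\Omega^*)+8\pi>\Lambda_{k-1}(\Omega)+8\pi .
\]
This yields a strict inequality provided $\Lambda_k(\Omega)$ is not larger than forced. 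More precisely, I would show $\Lambda_k(\Omega)<\Lambda_k(\Omega^*)$ via the following dichotomy for $\Omega$. Either $\Lambda_k(\Omega)>\Lambda_{k-1}(\Omega)+8\pi$, in which case a maximizer on $\Omega$ exists. Or $\Lambda_k(\Omega)=\Lambda_{k-1}(\Omega)+8\pi<\Lambda_{k-1}(\Omega^*)+8\pi\le\Lambda_k(\Omega^*)$, which is strict by induction. In the first case the maximizer $\mu$ lives on $\overbar{\Omega}\subsetneq\Omega^*$, so the second bullet gives $\Lambda_k(\Omega)=\overbar{\lambda}_k(\Omega,\mu)<\Lambda_k(\Omega^*)$. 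If $\Omega^*\approx\Sph^2$, the upper bound is $\Lambda_k(\Omega)\le\Lambda_k(\Sph^2)=8\pi k$. For the lower bound, Remark~\ref{rem:lower-lambda-bounds} gives $\Lambda_k(\Omega)\ge\Lambda_{k-1}(\Omega)+8\pi\ge 8\pi k$, using $\Lambda_0=0$.
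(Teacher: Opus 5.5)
The overall architecture is the paper's. The first bullet comes from Theorem~\ref{thm:qualitative-stability} plus unique continuation for $\Delta w=\abs{\d u}^2 w$ on the connected regular set $\Omega\setminus\sing u$. The second comes from extending $\mu$ to $M$. The spherical case uses monotonicity and Remark~\ref{rem:lower-lambda-bounds}. The non-spherical case uses the gap condition of Corollary~\ref{cor:existence} on $\Omega$ and then the second bullet.

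There is, however, a genuine gap in the non-spherical case, exactly where the hypothesis $\Omega^*\not\approx\Sph^2$ has to enter. You claim the second bullet gives strictness ``whenever $\Lambda_k(\Omega^*,[g])$ is attained.'' That is wrong. The second bullet bounds each $\overbar{\lambda}_k(\Omega,\mu)$ strictly, so it gives $\Lambda_k(\Omega,[g])<\Lambda_k(\Omega^*,[g])$ only when the supremum over measures on $\overbar{\Omega}$ is attained; attainment on $\Omega^*$ is irrelevant. A sanity check shows the problem: $\Lambda_1(\Sph^2)$ is attained (by $\id_{\Sph^2}$), yet $\Lambda_1(\Omega,[g])=8\pi=\Lambda_1(\Sph^2)$ when $\Omega^*\approx\Sph^2$. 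So an argument that uses $\Omega^*\not\approx\Sph^2$ only to obtain attainment of $\Lambda_1(\Omega^*)$ would also ``prove'' the false inequality $\Lambda_1(\Omega,[g])<8\pi$ in the spherical case. Your corrected dichotomy does not repair this at $k=1$. There the second alternative is $\Lambda_1(\Omega,[g])=\Lambda_0(\Omega,[g])+8\pi=8\pi$, and ``strict by induction'' would require $\Lambda_0(\Omega)<\Lambda_0(\Omega^*)$, which is false since both vanish.

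The missing ingredient is Petrides' strict bound in Remark~\ref{rem:lambda-1}: $\Lambda_1(\Omega^*,[g])>8\pi$ when $\Omega^*\not\approx\Sph^2$. It settles the second alternative at $k=1$, and with that base case your induction is the paper's argument. The paper takes the smallest $k$ with $\Lambda_k(\Omega,[g])=\Lambda_k(\Omega^*,[g])$. It shows the gap $\Lambda_k(\Omega,[g])>\Lambda_{k-1}(\Omega,[g])+8\pi$ holds there, by Remark~\ref{rem:lambda-1} if $k=1$ and by minimality otherwise. Corollary~\ref{cor:existence} then gives a maximizer on $\overbar{\Omega}$, which contradicts the second bullet.

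Two smaller points:
\begin{itemize}
\item In the first bullet, applying unique continuation ``componentwise'' does not show $u^i\equiv c^i$ when $c^i\neq 0$, because $u^i-c^i$ does not solve $\Delta w=Vw$. Apply it instead to $\langle u,b\rangle$ with $b\perp c$, as the paper does. This gives $u\parallel c$, and then $\abs{u}=1$ and connectedness of $\Omega\setminus\sing u$ give $u\equiv c$.
\item In the second bullet, justify $M\setminus\overbar{\Omega}\neq\varnothing$ as follows. Since $M$ is connected and $\Omega\neq M$ is open, $\partial\Omega\neq\varnothing$. Near a boundary point, the strict subgraph of the local defining continuous function is open and disjoint from $\overbar{\Omega}$.
\end{itemize}
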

Taking $\supp \mu \subset \partial \Omega$, we obtain the strict inequality between weighted Steklov eigenvalues and $\Lambda_k(\Omega, g)$.

\subsection{Abstract differentiation of variational eigenvalues}
The preceding results follow from the first-variation formula developed in this section.
Let $H$ be a complex Hilbert space and $\mathfrak{L}_{sa}[H]$ be the space of bounded self-adjoint operators on $H$. For an operator $S \in \mathfrak{L}_{sa}[H]$, we define
its $k$th variational eigenvalue as
\begin{equation}
  \lambda_k(S) = \sup_{\dim V = k}
    \inf_{x \in V\setminus\brc{0}}
    \frac{\brt{Sx,x}}{\norm{x}^2},
\end{equation}
where $k \geq 1$. Note that this differs from Laplace/Steklov convention, where we have $k \geq 0$.
The eigenvalues $\brc{\lambda_k(S)}_{k \geq 1}$ constitute precisely the upper part of the spectrum $\sigma(S)$, and one of the following two
alternatives occurs:
\begin{equation}
  \brc{\lambda_k(S)}_{k=1}^\infty = \sigma(S)\cap (\sigma_*,\infty) \quad\text{and}\quad \lambda_k(S) \searrow  \sigma_*,
\end{equation}
where $\sigma_*=\sup \sigma_{ess}(S)$, or
\begin{equation}
  \brc{\lambda_k(S)}_{k=1}^\infty = \sigma(S)\cap [\sigma_*,\infty)\quad\text{with}\quad \lambda_{k_0}(S) = \lambda_{k_0+1}(S)=\cdots = \sigma_*
\end{equation}
for some $k_0$.

Recall that the dual of the space of (self-adjoint) compact operators $\mathfrak{K}_{sa}[H]$ is the space of (self-adjoint) trace-class operators $\mathfrak{N}_{sa}[H]$. Furthermore,
$\mathfrak{N}_{sa}[H]^* = \mathfrak{L}_{sa}[H]$. Trace-class operators are also known as nuclear operators in the theory of operator ideals on Banach spaces,
and this viewpoint appears useful in eigenvalue optimization (see, for example, \cite{Vinokurov:2026:p-harm-and-conf-class-opt}).

Consider an abstract eigenvalue optimization problem on a bounded subset of positive compact operators (so that $\sigma_* \equiv 0$):
\begin{equation}\label{abstract-eigenval-opt-problem}
  \lambda_k(S) \to \min, \quad S \in \mathcal{S} \subset \mathfrak{K}_+[H].
\end{equation}
The $k$th eigenvalue functional is Lipschitz on $\mathfrak{K}_+[H]$ (and even on $\mathfrak{L}_{sa}[H]$). Therefore, if $S \in \mathcal{S}$ is a local
minimizer of $\lambda_k$, one can study its properties using the Clarke subdifferential and the method of Clarke multipliers;
see, for example, \cite[Theorem~1]{Clarke:1976:lagrange-multipliers} and Section~\ref{subsec:clarke-subdif}.
Unfortunately, the minimum in problem~\eqref{abstract-eigenval-opt-problem} may not exist in the class of compact operators, since
bounded subsets of $\mathfrak{K}[H]$ are not necessarily compact (even in the weak topology). Eigenvalue optimization
on high-dimensional spheres (see \cite[Theorem~1.8]{Vinokurov:2025:higher-dim-harm-eigenval}) provides an example of this phenomenon.

On the other hand, the functional $S \mapsto \lambda_k(S)$ is lower semicontinuous with respect to the weak operator topology.
Thus, minimizers always exist in the weak-$*$ closure $\overline{\mathcal{S}}^{w^*} \subset \mathfrak{L}_{sa}[H] = \mathfrak{K}_{sa}[H]^{**}$.
Therefore, the following theorem shows that the method of Clarke multipliers remains applicable, even when $S \in  \mathfrak{L}_{sa}[H] \setminus \mathfrak{K}_{sa}[H]$.
For convenience, set $\lambda_{0}(S) := \infty$.
\begin{theorem}\label{thm:subdiff-calc-all}
  Let $S \in \mathfrak{L}_{sa}[H]$, and let its $k$th variational eigenvalue satisfy $\lambda_k(S) < \lambda_{k-1}(S)$.
  Then the Clarke subdifferential ${\partial_c \lambda_k(S)} \subset \mathfrak{L}_{sa}[H]^*$ of the Lipschitz
  function $\lambda_k \colon \mathfrak{L}_{sa}[H] \to \R$ at $S$ is given by
  \begin{equation}\label{generalized-clark-subdif-all}
    \begin{aligned}
      \partial_c \lambda_k(S) &= \set*{\phi \in \mathfrak{L}_{sa}[H]^*}{\norm{\phi} = 1,\ \phi \geq 0,\ \supp \phi \subset E_{(\lambda_k - \epsilon, \lambda_k]}(S)\ \forall \epsilon > 0}
      \\                      &= \bigcap_{\epsilon > 0} \overline{\operatorname{co}}^{w^*}\set*{x\otimes x}{x \in E_{(\lambda_k - \epsilon, \lambda_k]}(S),\ \norm{x} = 1}.
    \end{aligned}
  \end{equation}
\end{theorem}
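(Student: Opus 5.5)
The plan is to reduce the nonsmooth function $\lambda_k$ to a difference of a convex function and a strictly differentiable one, and then compute a convex subdifferential. For $j\ge 1$ let
\[
f_j(S)=\sup_{\dim V=j}\Tr(P_V S P_V)=\sup_{\dim V = j}\sum_{i=1}^j \brt{S e_i,e_i},
\]
where $P_V$ is the orthogonal projection onto $V$ and $\{e_i\}$ is an orthonormal basis of $V$; set $f_0 := 0$. By the Ky Fan principle, $f_j(S)=\lambda_1(S)+\dots+\lambda_j(S)$, so $\lambda_k=f_k-f_{k-1}$. Each $f_j$ is a supremum of weak-$*$ continuous linear functionals, hence convex and Lipschitz.

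The hypothesis $\lambda_k(S)<\lambda_{k-1}(S)$ means the following. By the description of the variational eigenvalues as the top of $\sigma(S)$, we have $\sigma(S)\cap(\lambda_k,\infty)=\{\lambda_1,\dots,\lambda_{k-1}\}$, this part has finite total multiplicity $k-1$, and it is separated from the rest of $\sigma(S)$ by the gap $(\lambda_k,\lambda_{k-1})$. Let $P=E_{(\lambda_k,\infty)}(S)$, which has rank $k-1$. By Riesz projection perturbation theory, the spectral projection of $S'$ onto the top $k-1$ eigenvalues depends analytically on $S'$ near $S$. Hence $f_{k-1}$ is Fréchet, indeed strictly, differentiable at $S$ with derivative $T\mapsto\Tr(PT)$. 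The Clarke sum rule is exact when one summand is strictly differentiable, so
\[
\partial_c\lambda_k(S)=\partial f_k(S)-\Tr(P\,\cdot\,),
\]
where $\partial f_k(S)$ is the convex subdifferential, which equals the Clarke subdifferential for convex Lipschitz functions.

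Next I would compute $\partial f_k(S)$. For a supremum of a family of linear functionals $\ell_V=\Tr(P_V\,\cdot\,)$ that is bounded in $\mathfrak{L}_{sa}[H]^*$, the standard formula in a dual space is
\[
\partial f_k(S)=\bigcap_{\epsilon>0}\overline{\operatorname{co}}^{w^*}\set{\ell_V}{\ell_V(S)>f_k(S)-\epsilon}.
\]
This is the Valadier / Hantoute–López–Zălinescu formula. Alternatively, it follows from the $\epsilon$-subdifferential characterization together with the bipolar theorem, since the weak-$*$ closed convex hull of the $\ell_V$ is weak-$*$ compact. I then analyze the $\epsilon$-maximizers. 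Suppose $\dim V=k$ and $\Tr(P_VSP_V)>f_k(S)-\epsilon$. The spectral gap above $\lambda_k$ then forces $V$ to contain a subspace $\delta(\epsilon)$-close to $\operatorname{ran}P$, with $\delta(\epsilon)\to0$. The remaining unit vector $x\perp\operatorname{ran}P$ must satisfy $\brt{Sx,x}>\lambda_k-\epsilon'$. Splitting $x$ spectrally, the component of $x$ outside $E_{(\lambda_k-\sqrt{\epsilon'},\lambda_k]}(S)$ then has norm $O(\epsilon'^{1/4})$. Thus $\ell_V-\Tr(P\,\cdot\,)$ is norm-close to a vector state $x\otimes x$ with $x\in E_{(\lambda_k-\eta,\lambda_k]}(S)$, where $\eta\to0$ as $\epsilon\to 0$. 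Conversely, for every such $x$ the space $V=\operatorname{ran}P\oplus\operatorname{span}\{x\}$ is $\eta$-optimal. Taking weak-$*$ closed convex hulls and intersecting over $\epsilon$ gives the second expression in \eqref{generalized-clark-subdif-all}.

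It remains to identify this set with the first expression, the set of states supported in every $E_{(\lambda_k-\epsilon,\lambda_k]}(S)$. Write $Q_\epsilon=E_{(\lambda_k-\epsilon,\lambda_k]}(S)$; here "supported" means $\phi(Q_\epsilon)=1$. One inclusion is clear, since each $x\otimes x$ with $x\in\operatorname{ran}Q_\epsilon$ is a state with $\phi(Q_\epsilon)=1$, and these conditions are weak-$*$ closed and convex. For the converse, let $\phi$ be a positive functional with $\norm{\phi}=1$ and $\phi(Q_\epsilon)=1$. The Cauchy–Schwarz inequality for positive functionals gives $\phi(T)=\phi(Q_\epsilon TQ_\epsilon)$. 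So $\phi$ is a state on $Q_\epsilon\mathfrak{L}Q_\epsilon\cong\mathfrak{L}[\operatorname{ran}Q_\epsilon]$. By the density of normal states in the state space, which is Kaplansky/Krein–Milman together with the fact that normal states are convex combinations of vector states, $\phi$ lies in $\overline{\operatorname{co}}^{w^*}\set{x\otimes x}{x\in\operatorname{ran}Q_\epsilon,\ \norm{x}=1}$.

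I expect the main obstacle to be the second step. One difficulty is justifying the subdifferential formula for $f_k$ in the nonreflexive setting $\mathfrak{L}_{sa}[H]^*$, where the functionals are not normal. The other is the quantitative near-maximizer estimate when $\lambda_k$ is an accumulation point of, or lies inside, the essential spectrum. In that case no eigenvector need exist, which is exactly why the intersection over $\epsilon$ is needed rather than a single eigenspace. I would handle both by working directly with the $\epsilon$-optimal $k$-planes and the support function
\[
T\mapsto\lim_{\epsilon\to0}\sup\set{\brt{Tx,x}}{x\in\operatorname{ran}Q_\epsilon,\ \norm{x}=1},
\]
checking that this coincides with the Clarke directional derivative $\lambda_k^\circ(S;T)$.
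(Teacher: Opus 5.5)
Your proof is correct, but it takes a genuinely different route from the paper. The paper first shifts $S$ so that $S\ge 0$ and $\lambda_k(S)>0$. It then writes $\lambda_k(S')=\norm{Q_{S'}S'Q_{S'}}$ near $S$, where $Q_{S'}$ is the complement of the analytic Riesz projection onto the top $k-1$ eigenvalues. From there it uses the Clarke subdifferential of the norm together with Proposition~\ref{prop:max-func}. Identifying norming functionals with states supported on $E_{(\lambda_1-\epsilon,\lambda_1]}$ costs only the inequality $\lambda_1-S\ge\epsilon Q$ and Cauchy--Schwarz. So the paper obtains the state description first and reaches the hull description through density of normal states.

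Because the chain rule gives only an inclusion, the paper needs two further pieces. One is the computation $\d_S\gamma^*\phi=\phi$ for states supported on $\operatorname{im}Q_S$. The other is a second chain-rule argument for the reverse inclusion, via the affine embedding $T\mapsto Q_STQ_S+P_SSP_S$.

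Your Ky Fan decomposition $\lambda_k=f_k-f_{k-1}$, with $f_{k-1}$ of class $C^1$ near $S$ by the same Riesz-projection argument, makes the sum rule exact. You therefore get equality in one pass, with no shift and no reverse-inclusion step. As a byproduct it shows that $\lambda_k$ is convex minus $C^1$ near $S$, hence Clarke regular.

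The price is that you reach the hull description first and must do by hand the $\epsilon$-optimal-plane analysis that Proposition~\ref{prop:max-func} does for free when $k=1$. That analysis has four parts:
\begin{itemize}
\item the gap estimate $\Tr(P-PP_VP)<\epsilon/(\lambda_{k-1}-\lambda_k)$;
\item choosing $W=P_V(\operatorname{ran}P)\subset V$, and projecting the leftover unit vector by $1-P$, since it is orthogonal to $W$ rather than to $\operatorname{ran}P$;
\item a Chebyshev bound on its spectral measure;
\item removing the norm errors in the intersection over $\epsilon$. For this use $\overline{\operatorname{co}}^{w^*}(A+\delta B)\subset\overline{\operatorname{co}}^{w^*}A+\delta B$, with $B$ the weak-$*$ compact unit ball, together with the nestedness of these norm-closed hulls.
\end{itemize}

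The obstacle you flagged for the supremum formula is not real. For the weak-$*$ compact set $C=\overline{\operatorname{co}}^{w^*}\{\ell_V\}$, the subdifferential $\partial f_k(S)$ is the face of $C$ exposed by $S$. Writing $C$ as the convex hull of the two compact hulls (of the $\epsilon$-optimal and the non-$\epsilon$-optimal $\ell_V$) shows this face lies in the first one.

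Your final identification of the two descriptions is the same as the paper's Corollary~\ref{cor:max-func}.
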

  Here, $E_{(\lambda_k-\epsilon, \lambda_k]}(S)$ is the image of the spectral projection $\chi_{(\lambda_k-\epsilon, \lambda_k]}(S)$,
  and $\overbar{\operatorname{co}}^{w^*} A$ stands for the weak-$*$ closure of the convex hull of $A$.

Theorem~\ref{thm:subdiff-calc-all} generalizes the well-known subdifferential formulas for eigenvalues of compact
operators (see, for example, \cite{Borwein-Read-Lewis-Zhu:2000:subdif-of-spec-func}) or for eigenvalues of finite multiplicity (see Proposition~\ref{prop:subdiff-calc}).
Sometimes, we deal with a parametric eigenvalue optimization problem: for a function $f \colon E \to \mathfrak{L}_{sa}[H]$, one considers
\begin{equation}
  (\lambda_k \circ f)(p) \to \min, \quad p \in \mathcal{P} \subset E.
\end{equation}
Combining the chain rule (Proposition~\ref{prop:clarke-compose}) and Theorem~\ref{thm:subdiff-calc-all}
with standard properties of continuous maps, closures, and intersections, yields the following
\begin{corollary}\label{cor:subdiff-composed-eigenval}
  Let $E$ be a Banach space, and let $f \colon E \to \mathfrak{L}_{sa}[H]$ be a $C^1$ map defined on a neighborhood of $p \in E$.
  If $\lambda_k(f(p)) < \lambda_{k-1}(f(p))$, then
  $\partial_c(\lambda_k \circ f)(p) \subset E^*$ satisfies
  \begin{equation}
  \partial_c(\lambda_k \circ f)(p) \subset \bigcap_{\epsilon > 0} \overline{\operatorname{co}}^{w^*}
    \set*{\d_p f^* (x \otimes x) }{x \in E_{(\lambda_k-\epsilon, \lambda_k]}(f(p)),\ \norm{x} = 1}.
\end{equation}
\end{corollary}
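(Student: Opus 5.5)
The plan is to combine the chain rule for Clarke subdifferentials, Proposition~\ref{prop:clarke-compose}, with the explicit description of $\partial_c\lambda_k(f(p))$ in Theorem~\ref{thm:subdiff-calc-all}. Since $f$ is $C^1$ near $p$, it is strictly differentiable at $p$, and $\lambda_k$ is Lipschitz on $\mathfrak{L}_{sa}[H]$. The chain rule should therefore give
\begin{equation}
  \partial_c(\lambda_k\circ f)(p) \subset \d_p f^*\bigl(\partial_c \lambda_k(f(p))\bigr),
\end{equation}
where $\d_p f^*\colon \mathfrak{L}_{sa}[H]^*\to E^*$ is the Banach adjoint of the bounded operator $\d_p f\colon E\to\mathfrak{L}_{sa}[H]$. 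The hypothesis $\lambda_k(f(p))<\lambda_{k-1}(f(p))$ is exactly what Theorem~\ref{thm:subdiff-calc-all} needs at $S=f(p)$. It yields
\begin{equation}
  \partial_c\lambda_k(f(p)) = \bigcap_{\epsilon>0} C_\epsilon,
  \qquad
  C_\epsilon := \overline{\operatorname{co}}^{w^*} A_\epsilon,
\end{equation}
with $A_\epsilon=\set{x\otimes x}{x\in E_{(\lambda_k-\epsilon,\lambda_k]}(f(p)),\ \norm{x}=1}$.

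Next I push the adjoint through the intersection and the closed convex hull, using two standard facts.

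\textbf{Step 1 (intersection).} The image of an intersection lies in the intersection of the images:
\begin{equation}
  \d_pf^*\Bigl(\bigcap_\epsilon C_\epsilon\Bigr)\subset\bigcap_\epsilon \d_pf^*(C_\epsilon).
\end{equation}

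\textbf{Step 2 (hull).} For each fixed $\epsilon$ I need
\begin{equation}
  \d_pf^*(C_\epsilon)\subset \overline{\operatorname{co}}^{w^*}\,\d_pf^*(A_\epsilon).
\end{equation}
Three ingredients give this.
\begin{itemize}
  \item $\d_pf^*$ is linear, so it maps $\operatorname{co}A_\epsilon$ onto $\operatorname{co}\,\d_pf^*(A_\epsilon)$.
  \item The adjoint of a bounded operator is weak-$*$-to-weak-$*$ continuous: if $\phi_\alpha\to\phi$ weak-$*$ in $\mathfrak{L}_{sa}[H]^*$, then $\phi_\alpha(\d_pf\,v)\to\phi(\d_pf\,v)$ for every $v\in E$.
  \item A continuous map sends the closure of a set into the closure of its image. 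Hence the weak-$*$ closure of $\operatorname{co}A_\epsilon$ is mapped into the weak-$*$ closure of $\operatorname{co}\,\d_pf^*(A_\epsilon)$.
\end{itemize}
Chaining the chain rule, Theorem~\ref{thm:subdiff-calc-all}, Step~1 and Step~2 gives the stated inclusion.

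The one point needing care is that the chain rule is applied with the correct dual pairing. The elements $x\otimes x$ live in $\mathfrak{L}_{sa}[H]^*$ as the functionals $T\mapsto\brt{Tx,x}$, not in a predual. Consequently $\d_pf^*(x\otimes x)$ is the functional $v\mapsto\brt{(\d_pf\,v)x,x}$. The closure is taken in the weak-$*$ topology of $E^*$, and I expect this to be the convention intended in the statement. If Proposition~\ref{prop:clarke-compose} is only stated for maps into a general Banach space with a strictly differentiable inner map, that is exactly our situation, so I anticipate no real obstacle. The substantive content has already been carried by Theorem~\ref{thm:subdiff-calc-all}.
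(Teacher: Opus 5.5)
Your proposal is correct and matches the paper's argument: the paper obtains this corollary exactly by combining the chain rule (Proposition~\ref{prop:clarke-compose}) with Theorem~\ref{thm:subdiff-calc-all}. It then uses the weak-$*$ continuity of $\d_p f^*$ and the elementary behavior of images under intersections, convex hulls, and weak-$*$ closures, as in your Steps~1 and~2.
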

  See \eqref{eq:robin-subdiff}, \eqref{eq:laplace-subdiff}, and \eqref{eq:steklov-subdiff} for some explicit
computations of $\partial_c(\lambda_k \circ f)(p)$. Observe that in these computations, we consider only
compact perturbations of the original operator, which appear to be precisely what is needed in applications:
\begin{corollary}\label{cor:subdiff-calc}
  In the context of Theorem~\ref{thm:subdiff-calc-all}, consider the function $K \mapsto {\lambda_k(S+K)}$,
  where $K \in \mathfrak{K}_{sa}[H]$. Then its Clarke subdifferential ${\partial_c \lambda_k(S + 0)} \subset \mathfrak{N}_{sa}[H]$
  satisfies the inclusion
  \begin{equation}\label{generalized-clark-subdif}
    \partial_c \lambda_k(S + 0) \subset
    \set*{\sum_{i=1}^\infty x_i \otimes x_i }{
      x_i \in E_{\lambda_k}(S),\ \sum_{i}  \norm{x_i}^2  \leq 1
    }.
  \end{equation}
  That is, $\partial_c \lambda_k(S + 0)$ is contained in the unit ball of positive
  trace-class operators supported on $E_{\lambda_k}$.
\end{corollary}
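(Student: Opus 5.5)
The plan is to view $g(K) = \lambda_k(S+K)$ as the composition $\lambda_k \circ f$ with the affine map $f\colon \mathfrak{K}_{sa}[H] \to \mathfrak{L}_{sa}[H]$, $f(K) = S + K$. Here $\d_0 f = \iota$ is the inclusion and $\iota^*\colon \mathfrak{L}_{sa}[H]^* \to \mathfrak{K}_{sa}[H]^* = \mathfrak{N}_{sa}[H]$ is restriction. Corollary~\ref{cor:subdiff-composed-eigenval} (equivalently, the chain rule together with Theorem~\ref{thm:subdiff-calc-all}) then gives
\begin{equation}
  \partial_c \lambda_k(S+0) \subset \iota^*\bigl(\partial_c\lambda_k(S)\bigr).
\end{equation}
Hence it suffices to describe the restriction to compact operators of every $\phi \in \mathfrak{L}_{sa}[H]^*$ with $\norm{\phi}=1$, $\phi\geq 0$, and $\supp\phi \subset E_{(\lambda_k-\epsilon,\lambda_k]}(S)$ for all $\epsilon>0$.

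\emph{Step 1: normal part.} Since $\phi\geq 0$, the restriction $\psi = \iota^*\phi$ is a positive functional on $\mathfrak{K}_{sa}[H]$ with $\norm{\psi}\leq 1$. It is therefore represented by a positive trace-class operator $T$ with $\Tr T \leq 1$. The spectral decomposition $T = \sum_i x_i\otimes x_i$, with $\sum_i\norm{x_i}^2 = \Tr T \le 1$, gives the required form. It remains to show that each $x_i \in E_{\lambda_k}(S)$.

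\emph{Step 2: support.} Let $P_\epsilon = \chi_{(\lambda_k-\epsilon,\lambda_k]}(S)$. I read the support condition as $\phi(A) = \phi(P_\epsilon A P_\epsilon)$ for all $A$; for positive functionals this is equivalent to $\phi(I-P_\epsilon)=0$, by Cauchy--Schwarz. For a finite-rank $A$, the operator $P_\epsilon A P_\epsilon$ is again compact, so $\Tr(TA) = \Tr(T P_\epsilon A P_\epsilon)$. This forces $P_\epsilon T P_\epsilon = T$, i.e.\ $\operatorname{ran} T \subset E_{(\lambda_k-\epsilon,\lambda_k]}(S)$ for every $\epsilon$. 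Indeed, test with $A = y\otimes y$ for $y \perp$ the range of $P_\epsilon$: positivity gives $\langle Ty,y\rangle = 0$, hence $Ty = 0$ on $\ker P_\epsilon$, and self-adjointness finishes. Since $\bigcap_{\epsilon>0}(\lambda_k-\epsilon,\lambda_k] = \{\lambda_k\}$, strong continuity of the spectral measure gives $P_\epsilon \to \chi_{\{\lambda_k\}}(S)$ strongly. Therefore $\operatorname{ran}T \subset E_{\lambda_k}(S)$ and all $x_i \in E_{\lambda_k}(S)$.

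\emph{Main obstacle.} The delicate point is making sure the chain rule is legitimately applicable with the subspace $\mathfrak{K}_{sa}[H]$ and that restriction commutes with the weak-$*$ closures. For the affine map $f$ this is straightforward: the Clarke generalized directional derivative of $g$ at $0$ in direction $K$ is bounded by that of $\lambda_k$ at $S$ in direction $K$, since the former takes a $\limsup$ over fewer base points. From this, $\partial_c g(0)\subset\iota^*\partial_c\lambda_k(S)$ follows by Hahn--Banach. A second subtlety is that $\phi$ may have a singular part (vanishing on compacts), which is why only $\Tr T\leq 1$ rather than $=1$ holds. This loss is harmless because the singular part is discarded by $\iota^*$, and positivity is preserved under restriction.
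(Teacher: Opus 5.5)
Your proposal is correct and follows essentially the paper's proof: apply the chain rule to $K\mapsto S+\iota(K)$, note that $\iota^*$ is restriction to compacts, show every restricted element is a positive trace-class operator with trace at most $1$ supported on $E_{(\lambda_k-\epsilon,\lambda_k]}(S)$, and intersect over $\epsilon$ to land in $E_{\lambda_k}(S)$. The only difference is that you restrict the states from the first description in Theorem~\ref{thm:subdiff-calc-all} directly, whereas the paper passes through the weak-$*$ closed convex hull form (Corollary~\ref{cor:subdiff-composed-eigenval}) and uses that the positive unit ball of $\mathfrak{N}_+[E_{(\lambda_k-\epsilon,\lambda_k]}]$ is weak-$*$ closed in $\mathfrak{N}_{sa}[H]$. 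Your route makes the support argument slightly more explicit.
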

Note that if $\lambda_k(S) \in \sigma_{ess}(S)$, then $K = 0$ is a local minimum for $K \mapsto \lambda_k(S+K)$
and $0 \in \partial_c \lambda_k(S + 0)$, since compact perturbations do not change the essential spectrum.
\begin{remark}[On proving the existence of global minimizers]\label{rem:abstract-min-idea}
  The application of this corollary, presented in Section~\ref{sec:weak-stability}, can be described as follows.
  Suppose we are looking for minimizers of the problem \eqref{abstract-eigenval-opt-problem}. As noted above, we know that there always exists
  a global minimizer of the relaxed problem
  \begin{equation}\label{abstract-eigenval-opt-problem-weak}
    \lambda_k(S) \to \min, \quad S \in \overline{\mathcal{S}}^{w^*} \subset \mathfrak{L}_{+}[H].
  \end{equation}
  If $S_0 \in \overline{\mathcal{S}}^{w^*}$ is such a minimizing operator, the Clarke multiplier rule implies that
  \begin{equation}\label{eq:clarke-multipliers}
    0 \in \partial_c \lambda_k (S_0) + N_\mathcal{K}(S_0),
  \end{equation}
  where $\mathcal{K} = (S_0 + \mathfrak{K}_{sa}[H]) \cap \overline{\mathcal{S}}^{w^*}$ and
  $N_\mathcal{K}(S_0)$ is the Clarke normal cone of $\mathcal{K}$ at $S_0$ (see \cite{Clarke:1976:lagrange-multipliers}).
  If we are able to prove that \eqref{eq:clarke-multipliers} implies $S_0 \in \mathcal{S}$, then
  $S_0$ is a global minimizer for the original problem \eqref{abstract-eigenval-opt-problem} as well.
\end{remark}

Let $Herm\,[H], Herm_c\,[H] \subset \mathfrak{Bil}[H,\overbar{H}]$ be the subspaces of bounded and compact Hermitian bilinear forms on $H\times\overbar{H}$, respectively, where
$\overbar{H}$ denotes the conjugate Hilbert space of $H$. Note that
\begin{equation}
  Herm\,[H] \approx \mathfrak{L}_{sa}[H],\quad Herm_c\,[H] \approx \mathfrak{K}_{sa}[H],\quad
  \text{and}\quad \mathfrak{Bil}[H,\overbar{H}] \approx \mathfrak{L}[H],
\end{equation}
although these isomorphisms depend on the choice of inner product on $H$. The canonical duality identities are
\begin{equation}
  \mathfrak{Bil}_c[H,\overbar{H}]^* = H \widehat{\otimes}_\pi \overbar{H},\quad (H \widehat{\otimes}_\pi \overbar{H})^*
 = \mathfrak{Bil}[H,\overbar{H}]
\end{equation}
and
\begin{equation}
  Herm_c\,[H]^*  = (H \widehat{\otimes}_\pi \overbar{H})_{sym},
 \quad (H \widehat{\otimes}_\pi \overbar{H})_{sym}^* = Herm\,[H],
\end{equation}
where $\widehat{\otimes}_\pi$ denotes the projective tensor product
 (see Section~\ref{sec:proj-tens-prod}) and the subscript $sym$ denotes the subspace of symmetric tensors.

For $\mathfrak{a}, \mathfrak{b} \in Herm\,[H]$ such that $\mathfrak{b} \geq 0$ and $\ker \mathfrak{b} \cap \ker \mathfrak{a} = \brc{0}$, we
define variational eigenvalues $\lambda_k(\mathfrak{a},\mathfrak{b}) \in [-\infty, \infty]$ as follows:
\begin{equation}\label{eq:eigenval-quadratic}
  \lambda_k(\mathfrak{a},\mathfrak{b}) = \inf_{V_{k} \subset D} \sup_{x \in V_{k}\setminus\brc{0}} \frac{\mathfrak{a}[x]}{\mathfrak{b}[x]},
\end{equation}
where $V_k$ ranges over all $k$-dimensional subspaces of a dense subspace $D \subset H$, and we identify each bilinear form with its associated quadratic form, writing $\mathfrak{a}[x]:= \mathfrak{a}[x,x]$.
Recall that bilinear forms on $H \times \overbar{H}$ may be identified with bounded linear operators $\mathfrak{a},\mathfrak{b} \colon H \to \overbar{H}^* \approx H$,
where the isomorphism is given by the inner product.

In applications, we often deal with eigenvalues of two quadratic forms~\eqref{eq:eigenval-quadratic},
and the associated operator often has no convenient explicit representation. Hence, the following version of Theorem~\ref{thm:subdiff-calc-all} for quadratic forms is also of independent interest.
\begin{theorem}\label{thm:subdiff-eigenval-all}
    Let $\mathfrak{a}, \mathfrak{b} \in Herm\,[H]$ be such that
    \begin{itemize}
      \item $\sigma_{ess}(\mathfrak{a}) \subset (0,\infty)$,
      \item $\mathfrak{b} \geq 0$, and $\ker \mathfrak{b} \cap \set{x\in H}{\mathfrak{a}[x] \leq 0} = \brc{0}$.
    \end{itemize}
    Then $\mathfrak{a} + c \mathfrak{b} \geq \frac{1}{c}\norm{\cdot}^2$ and
    $(\lambda_k(\mathfrak{a},\mathfrak{b}) + c)^{-1} = \lambda_k(T)$ for $T = (\mathfrak{a} + c\mathfrak{b})^{-1}\mathfrak{b} \in \mathfrak{L}_{sa}[H]$ and some $c > 0$,
    where $H$ is equipped with the inner product $\mathfrak{a} + c\mathfrak{b}$.
    Furthermore, if $\lambda_{k-1}(\mathfrak{a},\mathfrak{b}) < \lambda_k(\mathfrak{a},\mathfrak{b})$ and $\rk \mathfrak{b} \geq k$, then
    the function $(\mathfrak{a}', \mathfrak{b}') \mapsto -\lambda_k(\mathfrak{a}', \mathfrak{b}')$ has a Lipschitz extension to a neighborhood of
    $(\mathfrak{a}, \mathfrak{b}) \in \brr{Herm\,[H]}^{\times 2}$, and its Clarke subdifferential $\partial_c (-\lambda_k)(\mathfrak{a}, \mathfrak{b}) \subset \brr{Herm\,[H]^*}^{\times 2}$
    satisfies the inclusion
    \begin{equation}
        \partial_c (-\lambda_k)(\mathfrak{a}, \mathfrak{b}) \subset
        \bigcap_{\epsilon > 0} \overline{\operatorname{co}}^{w^*}
          \set*{(- x\otimes x, \lambda_k  x\otimes x)}{x\in E_{[\lambda_k, \lambda_k + \epsilon)}(\mathfrak{a}, \mathfrak{b}),\ \mathfrak{b}[x] = 1},
    \end{equation}
    where $\lambda_k = \lambda_k(\mathfrak{a},\mathfrak{b})$ and $E_{I}(\mathfrak{a}, \mathfrak{b}) := E_{(I + c)^{-1}}(T)$.
\end{theorem}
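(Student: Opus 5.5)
We reduce Theorem~\ref{thm:subdiff-eigenval-all} to Theorem~\ref{thm:subdiff-calc-all} (through Corollary~\ref{cor:subdiff-composed-eigenval}) by writing $\lambda_k(\mathfrak{a},\mathfrak{b})$ as a smooth function of an ordinary variational eigenvalue of a bounded self-adjoint operator.

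\textbf{Step 1: coercivity of $\mathfrak{a}+c\mathfrak{b}$.} Since $\sigma_{ess}(\mathfrak{a})\subset(0,\infty)$, fix $\delta>0$ with $\sigma_{ess}(\mathfrak{a})\subset(2\delta,\infty)$. Then $H=F\oplus G$, where $F=E_{(-\infty,\delta]}(\mathfrak{a})$ is finite dimensional and $\mathfrak{a}\geq\delta$ on $G$.

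On the unit sphere of $F$, the continuous function $\mathfrak{b}[x]$ is strictly positive wherever $\mathfrak{a}[x]\leq0$, by the kernel assumption. A compactness argument then gives $c_0$ with $\mathfrak{a}+c_0\mathfrak{b}\geq\eta>0$ on the unit sphere of $F$.

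Next control the cross terms between $F$ and $G$. Use $\mathfrak{b}[f+g]\geq(1-t)\mathfrak{b}[f]-(t^{-1}-1)\mathfrak{b}[g]$ together with the fact that $\mathfrak{a}$ is block diagonal in $F\oplus G$. Enlarging $c$, this yields $\mathfrak{a}+c\mathfrak{b}\geq c^{-1}\|\cdot\|^2$ for some large $c$. This is routine but needs care: one also uses $\|\mathfrak{b}\|<\infty$, and since $\mathfrak{b}$ enters with a positive sign, the bad $\mathfrak{b}[g]$ term is absorbed by $\delta\|g\|^2$ once the parameters are tuned.

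\textbf{Step 2: rewriting the eigenvalues.} Equip $H$ with the inner product $\mathfrak{a}+c\mathfrak{b}$ and set $T=(\mathfrak{a}+c\mathfrak{b})^{-1}\mathfrak{b}$. Then $T$ is bounded, nonnegative and self-adjoint, with $\langle Tx,x\rangle=\mathfrak{b}[x]$. Since
\[
\frac{\mathfrak{a}[x]}{\mathfrak{b}[x]}+c=\frac{(\mathfrak{a}+c\mathfrak{b})[x]}{\mathfrak{b}[x]},
\]
and the map $t\mapsto t^{-1}$ reverses inf-sup into sup-inf, we get $(\lambda_k(\mathfrak{a},\mathfrak{b})+c)^{-1}=\lambda_k(T)$. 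Density of $D$ is harmless because the Rayleigh quotients are continuous. The condition $\rk\mathfrak{b}\geq k$ ensures $\lambda_k(T)>0$, so $\lambda_k$ is finite. The spectral gap $\lambda_{k-1}(\mathfrak{a},\mathfrak{b})<\lambda_k(\mathfrak{a},\mathfrak{b})$ becomes $\lambda_k(T)<\lambda_{k-1}(T)$.

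\textbf{Step 3: the map $f$ and the chain rule.} Define
\[
f(\mathfrak{a}',\mathfrak{b}')=(\mathfrak{a}'+c\mathfrak{b}')^{-1/2}\,\mathfrak{b}'\,(\mathfrak{a}'+c\mathfrak{b}')^{-1/2},
\]
computed with respect to a fixed reference inner product. This is a $C^1$ (indeed analytic) map near $(\mathfrak{a},\mathfrak{b})$, because coercivity is an open condition. Its eigenvalues coincide with the Rayleigh quotients of $\mathfrak{b}'/(\mathfrak{a}'+c\mathfrak{b}')$, so the identity of Step~2 persists nearby:
\[
-\lambda_k(\mathfrak{a}',\mathfrak{b}')=c-1/\lambda_k(f(\mathfrak{a}',\mathfrak{b}')).
\]
The right-hand side is Lipschitz near $(\mathfrak{a},\mathfrak{b})$ and provides the desired extension. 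Let $h(t)=c-1/t$, which is $C^1$ with $h'(t)=t^{-2}$. By Proposition~\ref{prop:clarke-compose} and Corollary~\ref{cor:subdiff-composed-eigenval}, $\partial_c(-\lambda_k)$ is contained in
\[
\lambda_k(T)^{-2}\bigcap_{\epsilon>0}\overline{\operatorname{co}}^{w^*}\{\d f^*(y\otimes y)\},
\]
where $y$ ranges over the unit sphere of $E_{(\lambda_k(T)-\epsilon,\lambda_k(T)]}(f)$. Under $t\mapsto t^{-1}-c$, these windows correspond exactly to $E_{[\lambda_k,\lambda_k+\epsilon')}(\mathfrak{a},\mathfrak{b})$.

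\textbf{Step 4: computing $\d f^*$ on a near-eigenvector (main obstacle).} Write $y=(\mathfrak{a}+c\mathfrak{b})^{1/2}x$. Differentiating $\langle f(\cdot)y,y\rangle$ is where the square roots make direct computation ugly, so we avoid them. The quantity we need is the derivative of the Rayleigh quotient
\[
R(\mathfrak{a}',\mathfrak{b}')=\frac{\mathfrak{b}'[x]}{(\mathfrak{a}'+c\mathfrak{b}')[x]}
\]
evaluated at an approximate eigenvector, and at exact eigenvectors the two derivatives agree by a Hellmann--Feynman type argument. With $\mathfrak{b}[x]=1$ and $\mathfrak{a}[x]=\lambda$, this derivative is
\[
\frac{\delta\mathfrak{b}[x]\,(\lambda+c)-(\delta\mathfrak{a}[x]+c\,\delta\mathfrak{b}[x])}{(\lambda+c)^2}
=\frac{-\delta\mathfrak{a}[x]+\lambda\,\delta\mathfrak{b}[x]}{(\lambda+c)^2}.
\]
Multiplying by $h'=(\lambda_k+c)^2$ gives the pair $(-x\otimes x,\lambda\, x\otimes x)$.

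For spectral-window vectors that are not exact eigenvectors, the discrepancy between $\d f^*(y\otimes y)$ and this pair is $O(\epsilon)$ in norm, and $\lambda$ differs from $\lambda_k$ by at most $\epsilon$. These errors vanish in $\bigcap_{\epsilon>0}$, since each closed convex hull at level $\epsilon$ is contained in an $O(\epsilon)$-neighborhood of the target set at a comparable level. Making this uniform estimate precise is the step I expect to require the most care. I would first try to prove the $O(\epsilon)$ bound directly from the spectral theorem applied to $f(\mathfrak{a},\mathfrak{b})$.
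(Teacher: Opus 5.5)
\textbf{The gap is in Step~1.} Splitting the cross term with fixed weights, $\mathfrak b[f+g]\ge(1-t)\mathfrak b[f]-(t^{-1}-1)\mathfrak b[g]$, cannot be tuned to work in general. Let $c_0$ be the threshold with $\mathfrak a+s\mathfrak b>0$ on $F$ iff $s>c_0$. Positivity on $F$ forces $c(1-t)>c_0$. The coefficient of $-\mathfrak b[g]$ is then $c(t^{-1}-1)=c(1-t)/t>c_0$. So enlarging $c$ or moving $t$ never helps, and absorbing this term into $\delta\norm{g}^2$ requires $c_0\norm{\mathfrak b|_G}<\delta$. This condition is scale-invariant and not implied by the hypotheses.

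Here is a concrete counterexample. On $\ell^2$ with basis $(e_j)_{j\ge0}$, let $\mathfrak a[x]=-|x_0|^2+2|x_1|^2+\tfrac12\sum_{j\ge2}|x_j|^2$ and $\mathfrak b[x]=|x_0-x_1|^2+|x_2|^2$.
\begin{itemize}
\item The hypotheses hold: $\sigma_{ess}(\mathfrak a)=\{\tfrac12\}$, and on $\ker\mathfrak b=\{x_0=x_1,\ x_2=0\}$ one has $\mathfrak a[x]=|x_0|^2+\tfrac12\sum_{j\ge3}|x_j|^2>0$.
\item In your notation $F=\C e_0$, $\delta<\tfrac14$ and $c_0=1$.
\item Every admissible choice gives $c(t^{-1}-1)>1$, so your lower bound is already negative at $g=e_2$. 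This holds even if you keep the exact form $\mathfrak a-c(t^{-1}-1)\mathfrak b$ on $G$ instead of $\delta\norm{g}^2$.
\item Yet $\mathfrak a+c\mathfrak b$ is coercive for every $c>2$. On $\operatorname{span}\{e_0,e_1\}$ its matrix has determinant $c-2$ and trace $2c+1$, and it is diagonal with entries $c+\tfrac12$ and $\tfrac12$ on the remaining basis vectors.
\end{itemize}

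\textbf{What is missing is a compactness argument,} which is what the paper uses. If no $c$ works, take $\norm{x_n}=1$ with $\mathfrak a[x_n]+n\mathfrak b[x_n]<1/n$. Write $x_n=f_n+g_n$ and pass to a subsequence with $f_n\to f$ in $F$ and $g_n\rightharpoonup g$ in $G$. Since $\mathfrak b[x_n]\to0$ and both $\mathfrak b$ and $\mathfrak a|_G\ge0$ are weakly lower semicontinuous, $f+g\in\ker\mathfrak b\cap\{\mathfrak a\le0\}$. Hence $f=g=0$, and then $\delta\norm{g_n}^2\le 1/n-\mathfrak a[f_n]\to0$ contradicts $\norm{x_n}=1$. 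The paper runs this argument with the compact form $\mathfrak r$ of Remark~\ref{rem:positive-ess-spec} in place of your splitting.

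Your Steps~2--4 follow the paper's route. One simplification: take $\mathfrak a+c\mathfrak b$ itself as the reference inner product in $f$, as the paper does. Then $A+cB=1$ at the base point and $\d f(\mathfrak h_a,\mathfrak h_b)=H_b-\tfrac12\brr{(H_a+cH_b)B+B(H_a+cH_b)}$. For $x$ in the spectral window of $B=T$ at $t=\lambda_k(T)$, the bound $\norm{(B-t)x}\le\epsilon\norm{x}$ gives your $O(\epsilon)$ discrepancy directly, without square roots or a Hellmann--Feynman argument. The passage to $\bigcap_{\epsilon>0}$ then works as you describe, since the weak-$*$ compact convex sets involved are norm closed.
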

\begin{remark}\label{rem:positive-ess-spec}
  The condition $\sigma_{ess}(\mathfrak{a}) \subset (0,\infty)$ is equivalent to the existence of a form $\mathfrak{r} \in Herm_c\,[H]$ such that $\mathfrak{r} + \mathfrak{a} \geq \epsilon \norm{\cdot}^2$ for some $\epsilon > 0$,
  as one can verify by using the spectral decomposition of $\mathfrak{a} = \mathfrak{a}_+ - \mathfrak{a}_-$. In particular, there is an orthogonal decomposition
  $H = E_{\sigma(\mathfrak{a}) > 0} \oplus E_{\sigma(\mathfrak{a}) \leq 0}$, and $\ker \mathfrak{a}_+ = E_{\sigma(\mathfrak{a}) \leq 0}$ is finite dimensional.
\end{remark}
Restricting to compact perturbations again yields
\begin{corollary}\label{cor:subdiff-eigenval}
    In the context of Theorem~\ref{thm:subdiff-eigenval-all},
    the function $(\mathfrak{s}_1, \mathfrak{s}_2) \mapsto -\lambda_k(\mathfrak{a}+\mathfrak{s}_1, \mathfrak{b}+ \mathfrak{s}_2)$ has a Lipschitz extension to a neighborhood of
    $(0, 0) \in \brr{Herm_{c}\,[H]}^{\times 2}$, and its Clarke subdifferential $\partial_c (-\lambda_k)(\mathfrak{a} + 0, \mathfrak{b} + 0) \subset (H \widehat{\otimes}_\pi \overbar{H})_{sym}^{\times 2}$
    satisfies the inclusion
    \begin{equation}
        \partial_c (-\lambda_k)(\mathfrak{a} + 0, \mathfrak{b} + 0) \subset
        \set*{(- \t, \lambda_k \t)}{\t = \sum_{i=1}^\infty x_i\otimes x_i,\  x_i \in E_{\lambda_k}(\mathfrak{a}, \mathfrak{b}),\ \mathfrak{b}[\t] \leq 1},
    \end{equation}
    where $\mathfrak{b}[x\otimes y] := \mathfrak{b}[x, y]$.
\end{corollary}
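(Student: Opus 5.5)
The plan is to deduce Corollary~\ref{cor:subdiff-eigenval} from Theorem~\ref{thm:subdiff-eigenval-all} by restricting to the closed subspace $Herm_c\,[H]^{\times 2}$, in the same way Corollary~\ref{cor:subdiff-calc} follows from Theorem~\ref{thm:subdiff-calc-all}. Let $\iota\colon Herm_c\,[H]^{\times 2}\to Herm\,[H]^{\times 2}$ be the affine map $(\mathfrak{s}_1,\mathfrak{s}_2)\mapsto(\mathfrak{a}+\mathfrak{s}_1,\mathfrak{b}+\mathfrak{s}_2)$. Its linear part is the inclusion, which is $C^1$. The Lipschitz extension given by Theorem~\ref{thm:subdiff-eigenval-all} near $(\mathfrak{a},\mathfrak{b})$, composed with $\iota$, is then a Lipschitz extension near $(0,0)$. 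The chain rule (Proposition~\ref{prop:clarke-compose}), or directly Corollary~\ref{cor:subdiff-composed-eigenval} applied through the operator $T$, gives
\begin{equation}
  \partial_c(-\lambda_k)(\mathfrak{a}+0,\mathfrak{b}+0)\subset \iota^*\,\partial_c(-\lambda_k)(\mathfrak{a},\mathfrak{b}).
\end{equation}
Here $\iota^*$ is the restriction map $Herm\,[H]^*\to Herm_c\,[H]^*=(H\widehat{\otimes}_\pi\overbar{H})_{sym}$, which is weak-$*$-to-weak-$*$ continuous. So we only need to identify the restriction to compact forms of each element of $\bigcap_{\epsilon>0}\overline{\operatorname{co}}^{w^*}\{(-x\otimes x,\lambda_k x\otimes x)\}$.

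Take $\Phi=(\phi_1,\phi_2)$ in that intersection. The defining sets are nested, so for each $\epsilon$ we get $\phi_2=-\lambda_k\phi_1$, $\phi_1\ge 0$, and $\phi_1$ is a weak-$*$ limit of convex combinations of $x\otimes x$ with $x\in E_{[\lambda_k,\lambda_k+\epsilon)}$ and $\mathfrak{b}[x]=1$. This uses that the set $\{(-\psi,\lambda_k\psi)\}$ is weak-$*$ closed.

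Now restrict to $Herm_c\,[H]$. This is the main step. Work in the inner product $\mathfrak{a}+c\mathfrak{b}$, so that forms correspond to operators and $E_{[\lambda_k,\lambda_k+\epsilon)}(\mathfrak{a},\mathfrak{b})$ is a spectral subspace $E_{J_\epsilon}(T)$ with $J_\epsilon\searrow\{(\lambda_k+c)^{-1}\}$. Write $P_\epsilon$ for its projection and $P_0$ for the projection onto $E_{\lambda_k}$.
\begin{itemize}
  \item A positive functional $\psi$ on $\mathfrak{L}_{sa}[H]$ restricted to $\mathfrak{K}_{sa}[H]$ is a positive trace-class operator $\t$, its normal part, with $\Tr$-norm at most $\norm{\psi}$.
  \item Because $\psi$ is supported in $E_{J_\epsilon}$, i.e.\ $\psi(X)=\psi(P_\epsilon XP_\epsilon)$ for all $\epsilon$, and the passage to the normal part respects compressions by these projections, we get $\t=P_\epsilon\t P_\epsilon$ for every $\epsilon$. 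Since $P_\epsilon\to P_0$ strongly, this gives $\t=P_0\t P_0$.
  \item The spectral decomposition $\t=\sum_i x_i\otimes x_i$ then has $x_i\in E_{\lambda_k}(\mathfrak{a},\mathfrak{b})$.
  \item For the normalization, note that $\mathfrak{b}$ is an element of $Herm\,[H]$, and by the constraint $\mathfrak{b}[x]=1$ it pairs to $1$ with every generator $x\otimes x$. By weak-$*$ continuity, $\phi_2(\mathfrak{b})=\lambda_k\phi_1(\mathfrak{b})$, equivalently $\phi_1(\mathfrak{b})=1$. Approximating $\mathfrak{b}$ from below by the compact positive forms $\mathfrak{b}(Q_n\cdot,Q_n\cdot)$ with finite-rank projections $Q_n\nearrow I$ shows that the normal part satisfies $\mathfrak{b}[\t]\le \phi_1(\mathfrak{b})=1$, since the singular part is nonnegative on $\mathfrak{b}$.
\end{itemize}

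The main obstacle is justifying that the normal part of a functional supported in a decreasing family of spectral subspaces is concentrated on their intersection $E_{\lambda_k}$. First I would try the argument used for Corollary~\ref{cor:subdiff-calc}: on compacts, $\psi(K)=\psi(P_\epsilon KP_\epsilon)$, and $P_\epsilon KP_\epsilon\to P_0KP_0$ in norm for compact $K$. Hence $\psi(K)=\psi(P_0KP_0)=\Tr(\t P_0KP_0)$, which gives $\t=P_0\t P_0$ directly. The last point is that trace-class elements of $(H\widehat{\otimes}_\pi\overbar{H})_{sym}$ correspond to $\sum x_i\otimes x_i$ once the inner product is changed. This is harmless because $\mathfrak{a}+c\mathfrak{b}$ is equivalent to the original inner product.
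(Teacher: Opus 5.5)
Your proposal is correct and follows essentially the paper's route. You pull back the inclusion of Theorem~\ref{thm:subdiff-eigenval-all} along $Herm_c\,[H]^{\times 2}\hookrightarrow Herm\,[H]^{\times 2}$ and note that the restriction to compact forms is a positive trace-class tensor supported in every $E_{[\lambda_k,\lambda_k+\epsilon)}(\mathfrak{a},\mathfrak{b})$, hence in $E_{\lambda_k}(\mathfrak{a},\mathfrak{b})$. You then decompose it spectrally in the inner product $\mathfrak{a}+c\mathfrak{b}$, and you also justify the bound $\mathfrak{b}[\boldsymbol{\tau}]\le 1$, which the paper leaves implicit.

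There are two small slips. First, with the paper's sign convention the first component is $\phi_1=-\psi\le 0$, not $\phi_1\ge 0$. Second, $\mathfrak{b}(Q_n\cdot,Q_n\cdot)\le\mathfrak{b}$ fails in general; take $B=\left(\begin{smallmatrix}1&1\\1&1\end{smallmatrix}\right)$ and $Q=\mathrm{diag}(1,0)$. Approximate from below by $B^{1/2}Q_nB^{1/2}$ instead, or simply invoke the positivity of the singular part of a positive functional.
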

\begin{example}
  Consider a bounded Lipschitz domain $\Omega$ in a Riemannian manifold $(M,g)$. Let
  $\mu$ be a positive measure and $\nu$ be a signed measure such that $\mu,\nu \in \mathfrak{Bil}[H^1(\Omega)]$,
  $H^1(\Omega) \to L^2(\overbar{\Omega}, \nu_{-})$ is compact, where $\nu = \nu_+ - \nu_-$, and
  $\int_{\Omega} \abs{\d \phi}^2 + \int_{\overbar{\Omega}}\phi^2 \d \nu > 0$ for all $\phi \in H^1\setminus \brc{0}$ with $\int_{\overbar{\Omega}}\phi^2 \d \mu = 0$. Then Theorem~\ref{thm:subdiff-eigenval-all}
  and Corollary~\ref{cor:subdiff-eigenval} apply to the following variational eigenvalue problem:
  \begin{equation}
    \lambda_k(\nu,\mu) = \inf_{V_{k} \subset \D(\overbar{\Omega})} \sup_{\phi \in V_{k}\setminus\brc{0}} \frac{\int_{\Omega} \abs{\d \phi}^2 + \int_{\overbar{\Omega}}\phi^2 \d \nu}{\int_{\overbar{\Omega}}\phi^2 \d \mu}.
  \end{equation}
  In particular, this includes a weighted Robin eigenvalue problem
  \begin{equation}\label{eq:Robin-problem}
    \begin{cases}
      \Delta u|_{\Omega} = \lambda u \mu
      \\ \partial_n u\left|_{\partial\Omega}\right.  + u \nu = 0
    \end{cases}
  \end{equation}
  if we require $\supp \nu \subset \partial \Omega$ and $\mu(\partial\Omega) = 0$. In that case,
  Corollary~\ref{cor:subdiff-eigenval} and Proposition~\ref{prop:clarke-compose} imply that the subdifferential of the function $(\rho, \varrho)\mapsto -\lambda_k(\nu + \rho,\mu + \varrho)$,
  where $(\rho, \varrho) \in L^\infty(\partial \Omega)\times L^\infty(\Omega)$, satisfies
  \begin{equation}\label{eq:robin-subdiff}
    \begin{multlined}
      \partial_c(-\lambda_k)(\nu + 0,\mu + 0) \\ \subset
      \set*{\brr{- \abs{u}^2\big|_{\partial\Omega}, \lambda_k \abs{u}^2}}{u \in H^1(\Omega,\ell^2) \text{ solving \eqref{eq:Robin-problem} with } \lambda = \lambda_k,\ \int_{\Omega} \abs{u}^2\d\mu \leq 1}.
    \end{multlined}
  \end{equation}
  See also Section~\ref{sec:meas-as-forms} for a criterion ensuring that $\mu \in \mathfrak{Bil}[H^1(\Omega)]$.
\end{example}

\subsection{Discussion of an extension to \texorpdfstring{$p$}{Lg}-harmonic map optimization}
  As noted above, Theorem~\ref{thm:qualitative-stability} is an improvement of \cite[Theorem~1.1]{Vinokurov:2026:p-harm-and-conf-class-opt}
  for $\overbar{\lambda}_k(\mu)$-maximization; that theorem additionally assumes that $\mu \in L^1_+(\Omega)$. However, \cite[Theorem~1.1]{Vinokurov:2026:p-harm-and-conf-class-opt}
  also applies to the optimization of $\overbar{\lambda}_{k,p}(\nu,\mu)$, where $\nu \in L_+^{p/(p-2)}(\Omega)$, $p \in [2,d]$,
  \begin{equation}\label{eq:eigen-var-char}
    \lambda_k(\nu,\mu) = \inf_{V_{k+1}\subset C^\infty(\overbar{\Omega})} \sup_{\phi \in V_{k+1}\setminus\brc{0}} \frac{\int \abs{\d\phi}^2 \d \nu }{\int \phi^2 \d\mu},
  \end{equation}
  and
  \begin{equation}
    \overbar{\lambda}_{k,p}(\nu,\mu) = \lambda_{k}(\nu,\mu)\frac{\mu(\overbar{\Omega})}{\norm{\nu}_{L^{\frac{p}{p-2}}}}.
  \end{equation}
  The critical measures for $\overbar{\lambda}_{k,p}(\nu,\mu)$ correspond to $p$-harmonic maps into spheres. Moreover,
  the case $p = d$ corresponds to optimization within the conformal class $[g]$ (see \cite{Vinokurov:2026:p-harm-and-conf-class-opt}):
  \begin{equation}
    \overbar{\lambda}_{k,d}(\rho^{d-2},\rho^d) = \lambda_k(\rho^2 g) \Vol_{\rho^2 g}(\Omega)^{2/d}.
  \end{equation}
  Thus, it is tempting to extend Theorem~\ref{thm:qualitative-stability} to the case of $p$-harmonic maps.

  To apply Corollaries~\ref{cor:subdiff-calc} and~\ref{cor:subdiff-eigenval} effectively, one needs sufficiently many compact perturbations
  that keep the operator within the constraint set. In particular, in the proof of Theorem~\ref{thm:qualitative-stability}, it is essential
  that we have enough compact perturbations of measures with respect to the underlying Hilbert space
  $H^1(\Omega)$. Indeed, every $\mu' \in L^\infty(\Omega)$ gives such a perturbation. The main difficulty with
  $\overbar{\lambda}_{k,p}(\nu,\mu)$-optimization is that one must now work
  with compact perturbations of $\mu$ on the Hilbert space
  \begin{equation}
      H^1(\Omega,\mu,\nu) := \overline{\set*{(\phi,d\phi)}{\phi \in C^\infty(\overbar{\Omega})}}^{L^2(\Omega,\mu)\oplus L^2(\Omega,\nu,T^*\Omega)}.
  \end{equation}
  Although $H^1(\Omega,\mu,1) = H^1(\Omega)$ by Proposition~\ref{prop:meas-bilinear} and Theorem~\ref{thm:subdiff-eigenval-all} if $\lambda_k(\mu) > 0$,
  the case of $p$-harmonic maps
  would require considering all the densities $\nu = \abs{\d u}^{p-2} \in L_+^{p/(p-2)}(\Omega)$ induced by $p$-harmonic maps into spheres.
  For $d\geq 3$, it remains open whether $\abs{\d u}$ can vanish on a set of positive Lebesgue measure, in which case
  we do not know whether the space $H^1(\Omega,\mu,\nu)$ admits sufficiently many compact perturbations.
  Another problem is that the variations of $\nu$ (that is, the variations of the Dirichlet energy) are never compact, so one cannot
  use Corollary~\ref{cor:subdiff-eigenval} and must instead work directly with Theorem~\ref{thm:subdiff-eigenval-all} and Corollary~\ref{cor:subdiff-composed-eigenval}.

\section{Preliminaries}
The abstract operator results are stated over complex Hilbert spaces. In the Laplace and Steklov
applications we apply their real versions to the real Hilbert spaces; these versions
follow by complexification.

\subsection{Projective tensor product}\label{sec:proj-tens-prod}
Let $E$ and $F$ be normed spaces.
Let $\t \in E \otimes F$ be an element of the algebraic tensor product of $E$ and $F$. Its projective norm is defined as
\begin{equation}
  \norm{\t}_\pi = \inf \set*{\sum \norm{x_i}\norm{y_i}}{\t = \sum x_i \otimes y_i}.
\end{equation}
The completion of $E \otimes_\pi F := (E \otimes F, \norm{\cdot}_{\pi})$ is denoted by $E \widehat{\otimes}_\pi F$ and is called the \emph{projective tensor product} of $E$ and $F$.
Note that $E \widehat{\otimes}_\pi F = \widehat{E} \widehat{\otimes}_\pi \widehat{F}$, where $\widehat{E}$ and $\widehat{F}$ are the corresponding
completions.
The projective tensor norm satisfies the universal property of the tensor product in the category of normed spaces:
a bilinear map $E\times F \to G$ is continuous if and only if
its linearization $E \otimes_\pi F \to  G$ is continuous and has the same norm.
\begin{equation}
  \begin{tikzcd}
    E \times F & E \otimes_\pi F \\
    & G
    \arrow[from=1-1, to=1-2]
    \arrow[from=1-1, to=2-2]
    \arrow[dashed, from=1-2, to=2-2]
  \end{tikzcd}
\end{equation}
Taking $G = \mathbb{K} \in \brc{\R,\C}$ gives the isometric identification
\begin{equation}
  (E \widehat{\otimes}_\pi F)^* = \mathfrak{Bil}[E,F].
\end{equation}
The algebraic tensor product $E^* \otimes F$ is canonically identified with the space of \emph{finite-rank operators} $\mathfrak{F}[E,F] \subset \mathfrak{L}[E,F]$, where
$ y^* \otimes x \mapsto (v \mapsto \brt{v,y^*} x)$. This embedding extends continuously to a map $E^* \widehat{\otimes}_\pi F \to \mathfrak{L}[E,F]$;
this extension is not injective in general. The image of this map, equipped with the corresponding quotient norm, is denoted by $\mathfrak{N}[E,F]$,
and the operators in $\mathfrak{N}[E,F]$ are called \emph{nuclear operators}.

Recall that for a complex vector space $E$, $\overbar{E}$ denotes its complex conjugate; that is, scalar multiplication is defined by
$\alpha \cdot x := \overline{\alpha} x$ for $x \in \overbar{E}$. There is a canonical antilinear isometry $E \mapsto \overbar{E}$.
If $E$ is the complexification of a real vector space, $E = E_1 \otimes_{\R} \C$, then the natural conjugation involution
$x \mapsto \overbar{x}$ gives a canonical linear isometry $E \mapsto \overbar{E}$.

Let $H$ be a complex Hilbert space. We identify $H^*$ with $\overbar{H}$ via the Riesz representation theorem; namely,
$x$ corresponds to the functional $y \mapsto \brt{y,x}_H$. Under this identification, the inner product
becomes a bilinear form $\brt{\cdot,\cdot}\colon H \times \overbar{H}\to \C$, and we have the following isometries:
\begin{itemize}
  \item $\mathfrak{Bil}[\overbar{H}, H] \cong \mathfrak{L}[H] \cong \mathfrak{Bil}[H, \overbar{H}]$;
  \item $H\widehat{\otimes}_{\pi}\overbar{H} \cong \mathfrak{N}[H]$.
\end{itemize}

The space of compact operators $\mathfrak{K}[H]$ is a predual of $\mathfrak{N}[H]$
(that is, $\mathfrak{K}[H]^* \approx \mathfrak{N}[H]$) with duality pairing given by
\begin{equation}\label{eq:duality}
  (\mathfrak{q} , \t) \mapsto \Tr \mathfrak{q}\t, \quad \mathfrak{q} \in \mathfrak{K}[H],\ \t \in \mathfrak{N}[H],
\end{equation}
while $\mathfrak{L}[H] = \mathfrak{N}[H]^*$.
Moreover, for $\t \in \mathfrak{N}[H]$, the projective norm coincides with the trace norm:
\begin{equation}
  \norm{\t}_\pi = \norm{\t}_{\mathfrak{N}} = \Tr \sqrt{\t^* \t}.
\end{equation}
In particular, for a positive operator represented by a tensor $\t = \sum x_i \otimes x_i$,
the norm is simply the trace:
\begin{equation}\label{eq:positive-tens-norm}
  \norm{\t}_{\pi} = \Tr \t = \sum \norm{x_i}^2.
\end{equation}
For a more detailed exposition of the projective tensor product, see, for example, \cite{Defant-Floret:1993:tensor-norms}.

\subsection{Clarke subdifferential}\label{subsec:clarke-subdif}
Let $E$ be a Banach space, $\overline{\R}:=\R\cup\brc{+\infty}$,
and let $f\colon E \to \overline{\R}$ be Lipschitz on a neighborhood of $x \in E$. Whenever
$f$ is originally defined only on a subset of $E$, we extend it by setting $f(y) = +\infty$ outside its domain.

The \emph{Clarke directional derivative} of $f$ at $x$ in the direction $v \in E$ is defined by
\begin{equation}
 f^{\circ}(x; v) = \limsup_{\tilde{x} \to x, \tau\downarrow 0}
 \frac{1}{\tau} \brs{f(\tilde{x} + \tau v) - f(\tilde{x})}.
\end{equation}
Since $v \mapsto f^{\circ}(x; v)$ is sublinear and Lipschitz,
it is the support function of a nonempty, convex, weak-$*$ compact subset of $E^*$.
The Clarke subdifferential $\partial_c f(x)$ is defined to be this subset. Namely,
\begin{equation}
  \partial_c f(x) = \set*{\xi \in E^*}{\brt{\xi, v}
  \leq f^{\circ}(x; v) \ \forall v \in E}.
\end{equation}
The following properties of the Clarke subdifferential can be found, for example, in \cite[Sections~7.3,~7.4]{Schirotzek:2007:nonsmooth-analysis}:
\begin{itemize}
  \item $\partial_c f(x) \neq \varnothing$ and $\partial_c f(x) \subset (\mathrm{Lip}_x f) B_{E^*}$, where $B_{E^*} \subset E^*$ is the unit ball;
  \item $\partial_c f(x)$ is a convex and weak-$*$ compact set;
  \item $\partial_c (\alpha f)(x) = \alpha\partial_c f(x)$ for $\alpha \in \R$;
  \item $\partial_c (f+g)(x) \subset \partial_c f(x) + \partial_c g(x)$;
  \item $\partial_c (fg)(x) \subset g(x)\partial_c f(x) + f(x)\partial_c g(x)$ (follows from \cite[Theorem~7.4.5]{Schirotzek:2007:nonsmooth-analysis});
  \item $\partial_c f(x) = \brc{\d_x f}$ if $f \in C^1$ on a neighborhood of $x$;
  \item $0 \in \partial_c f(x)$ if $x$ is a local maximum or minimum.
\end{itemize}
The Clarke subdifferential extends the notion of derivative to locally Lipschitz functions
and retains many properties of derivatives that are useful in optimization.
For applications of the Clarke subdifferential to critical metrics and eigenvalue optimization,
see~\cite{Petrides-Tewodrose:2024:eigenvalue-via-clarke,Vinokurov:2025:higher-dim-harm-eigenval,Vinokurov:2026:p-harm-and-conf-class-opt}.

The following chain rule can be found, for example, in \cite[Proposition~2.2]{Vinokurov:2025:sym-eigen-val-lms}.
\begin{proposition}[Chain rule]\label{prop:clarke-compose}
  Let $G \colon F \to E$ be a continuously Fréchet differentiable map between Banach spaces defined
  on a neighborhood of $x\in F$. If
  $f\colon E \to \overline{\R}$ is Lipschitz
  on a neighborhood of $y = G(x) \in E$, one has
  \begin{equation}
    \partial_c (f \circ G)(x) \subset \d_x G^*[\partial_c f (y)].
  \end{equation}
  In other words, the Clarke subdifferential commutes with pullbacks up to inclusion.
\end{proposition}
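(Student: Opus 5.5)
The plan is to compare Clarke directional derivatives and then pass to subdifferentials by duality. Set $y = G(x)$, $A = \d_x G$, and let $L$ be a Lipschitz constant of $f$ on a ball $B_r(y)$, on which $f$ is finite. Since $G$ is $C^1$, it is locally Lipschitz near $x$. Hence $f\circ G$ is Lipschitz on a neighborhood of $x$, and $\partial_c(f\circ G)(x)$ is well defined.

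\emph{Step 1 (uniform linearization).} For $\tilde x$ near $x$ and small $\tau>0$, the fundamental theorem of calculus gives
\begin{equation}
  G(\tilde x+\tau v)-G(\tilde x)-\tau A v=\int_0^1 \brr{\d_{\tilde x+s\tau v}G-\d_x G}\,\tau v\,\d s .
\end{equation}
Its norm is at most $\tau\norm{v}\,\omega(\tilde x,\tau)$, where $\omega(\tilde x,\tau)=\sup_{s\in[0,1]}\norm{\d_{\tilde x+s\tau v}G-\d_xG}$. By continuity of $\d G$, $\omega(\tilde x,\tau)\to 0$ as $\tilde x\to x$ and $\tau\downarrow 0$. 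I expect this uniformity in $\tilde x$ to be the main (if mild) obstacle, and continuity of the derivative is exactly what handles it.

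\emph{Step 2 (directional derivatives).} Put $\tilde y=G(\tilde x)$, so that $\tilde y\to y$ as $\tilde x\to x$. For $\tilde x$ close to $x$ and $\tau$ small, all points involved stay in $B_r(y)$, and the Lipschitz bound gives
\begin{equation}
  f(G(\tilde x+\tau v))-f(G(\tilde x))\le f(\tilde y+\tau Av)-f(\tilde y)+L\tau\norm{v}\,\omega(\tilde x,\tau).
\end{equation}
Divide by $\tau$ and take $\limsup$. The error term vanishes, and the remaining term is bounded by a $\limsup$ over $\tilde y\to y$. This yields
\begin{equation}
  (f\circ G)^\circ(x;v)\le f^\circ(y;Av)\quad\forall v\in F.
\end{equation}

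\emph{Step 3 (duality).} The adjoint $A^*\colon E^*\to F^*$ is weak-$*$ to weak-$*$ continuous. Since $\partial_c f(y)$ is nonempty, convex and weak-$*$ compact, the set $C:=A^*[\partial_c f(y)]$ is also nonempty, convex and weak-$*$ compact. Its support function is
\begin{equation}
  \sigma_C(v)=\sup_{\zeta\in\partial_c f(y)}\brt{\zeta,Av}=f^\circ(y;Av),
\end{equation}
because $f^\circ(y;\cdot)$ is the support function of $\partial_c f(y)$.

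Now take $\xi\in\partial_c(f\circ G)(x)$. Then $\brt{\xi,v}\le(f\circ G)^\circ(x;v)\le\sigma_C(v)$ for all $v\in F$. Suppose $\xi\notin C$. The dual of $(F^*,w^*)$ is $F$, so Hahn--Banach separation of the point $\xi$ from the weak-$*$ closed convex set $C$ gives some $v\in F$ with $\brt{\xi,v}>\sigma_C(v)$. This is a contradiction, so $\xi\in C=\d_xG^*[\partial_c f(y)]$, which is the claimed inclusion.
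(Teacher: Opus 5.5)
Your argument is correct and complete. You bound $(f\circ G)^\circ(x;v)\le f^\circ(y;\d_xG\,v)$ using the uniform linearization, which only needs continuity of $\d G$ at $x$, and then separate in the weak-$*$ topology against the compact convex set $\d_xG^*[\partial_c f(y)]$; this is the standard proof of this chain rule. The paper does not prove the proposition itself but quotes it from \cite[Proposition~2.2]{Vinokurov:2025:sym-eigen-val-lms}, so it offers no alternative route to compare with.
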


\begin{proposition}[{\cite[Propositions~4.6.2 and~7.3.9(d)]{Schirotzek:2007:nonsmooth-analysis}}]
  \label{prop:subdiff-of-norm}
  Let $E$ be a Banach space, and $\omega(x) := \norm{x}$. Then
  \begin{equation}
    \partial_c\omega(x) = \begin{dcases}
      \set*{x^* \in E^*}{\norm{x^*} = 1,\ \brt{x,x^*} = \norm{x}} &\quad \text{if } x \neq 0,    \\
      B_{E^*}                                                     &\quad x=0,
    \end{dcases}
  \end{equation}
  where $B_{E^*}$ is the unit ball of $E^*$.
\end{proposition}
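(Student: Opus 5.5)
The plan is to reduce the Clarke subdifferential of $\omega$ to its convex-analytic subdifferential, and then compute the latter by hand. Two facts about $\omega(x)=\norm{x}$ drive everything: it is convex, and it is $1$-Lipschitz on all of $E$. Hence $\partial_c\omega(x)$ is well defined at every point, and the one-sided directional derivative
\begin{equation}
  \omega'(x;v)=\lim_{\tau\downarrow 0}\frac{1}{\tau}\brs{\omega(x+\tau v)-\omega(x)}
\end{equation}
exists, because for a convex function the difference quotient is nondecreasing in $\tau$.

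\emph{Step 1: $\omega^\circ(x;v)=\omega'(x;v)$.} The inequality $\omega^\circ\ge\omega'$ is immediate by taking $\tilde x=x$ in the $\limsup$. For the reverse inequality, fix $\delta>0$. By monotonicity of convex difference quotients, for $0<\tau<\delta$ we have
\begin{equation}
  \frac{1}{\tau}\brs{\omega(\tilde x+\tau v)-\omega(\tilde x)}\le \frac{1}{\delta}\brs{\omega(\tilde x+\delta v)-\omega(\tilde x)}.
\end{equation}
Letting $\tilde x\to x$ and using continuity of $\omega$ gives $\omega^\circ(x;v)\le \frac{1}{\delta}\brs{\omega(x+\delta v)-\omega(x)}$. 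Sending $\delta\downarrow0$ then yields $\omega^\circ(x;v)\le\omega'(x;v)$.

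\emph{Step 2: identify $\partial_c\omega(x)$ with the convex subdifferential.} By Step 1, $\partial_c\omega(x)=\set{\xi}{\brt{\xi,v}\le\omega'(x;v)\ \forall v}$. I claim this equals
\begin{equation}
  \set*{\xi}{\norm{y}\ge\norm{x}+\brt{\xi,y-x}\ \forall y}.
\end{equation}
If $\xi$ satisfies the subgradient inequality, then dividing $\omega(x+\tau v)-\omega(x)\ge\tau\brt{\xi,v}$ by $\tau$ and letting $\tau\downarrow0$ gives $\brt{\xi,v}\le\omega'(x;v)$. Conversely, suppose $\brt{\xi,v}\le\omega'(x;v)$ for all $v$. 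Since $\omega'(x;v)\le\omega(x+v)-\omega(x)$ (take $\tau=1$ in the monotone quotient), setting $v=y-x$ gives the subgradient inequality.

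\emph{Step 3: compute the convex subdifferential.}
\begin{itemize}
  \item At $x=0$ the condition reads $\brt{\xi,y}\le\norm{y}$ for all $y$, which is exactly $\norm{\xi}\le1$, i.e.\ $\xi\in B_{E^*}$.
  \item For $x\ne0$, test the subgradient inequality with $y=0$ and $y=2x$. This gives $\brt{\xi,x}\ge\norm{x}$ and $\brt{\xi,x}\le\norm{x}$, so $\brt{\xi,x}=\norm{x}$. Substituting back, the inequality becomes $\brt{\xi,y}\le\norm{y}$ for all $y$, so $\norm{\xi}\le1$. Since $\brt{\xi,x/\norm{x}}=1$, in fact $\norm{\xi}=1$.
  \item Conversely, if $\norm{\xi}=1$ and $\brt{\xi,x}=\norm{x}$, then $\norm{x}+\brt{\xi,y-x}=\brt{\xi,y}\le\norm{y}$, so $\xi$ is a subgradient.
\end{itemize}

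The only step with any real content is Step 1, the equality of the Clarke and one-sided directional derivatives for convex continuous functions. Its crux is using monotonicity of the difference quotients so that the $\limsup$ over moving base points $\tilde x$ can be controlled by a fixed-$\delta$ quotient. Everything after that is elementary duality.
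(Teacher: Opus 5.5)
Your proof is correct and takes essentially the same route as the paper. The paper gives no argument of its own and simply cites Schirotzek's Proposition~7.3.9(d), which says that for a continuous convex function the Clarke subdifferential equals the convex subdifferential (your Steps~1--2), and Proposition~4.6.2, which computes the convex subdifferential of the norm (your Step~3); like the paper, you implicitly work over a real Banach space, which is the setting of its application to $\mathfrak{L}_{sa}[H]$.
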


The following proposition is a simplified version of~\cite[Theorem~12.4.1]{Schirotzek:2007:nonsmooth-analysis}
sufficient for our purposes (see also \cite{Clarke:1976:lagrange-multipliers}).
\begin{proposition}[Clarke’s Multiplier Rule]\label{prop:multip-rule}
  Let $E$ be a Banach space, and let $f \colon E \to \overline{\R}$ be a function
  that is Lipschitz on a neighborhood of $\overbar{x} \in S$, where $S \subset E$ is a closed
  convex set. If $\overbar{x}$ is a local minimum of $f$ on $S$, then
  \begin{equation}
    \exists x^* \in \partial_c f(\overbar{x}) \colon \
    \brt{x-\overbar{x}, x^*} \geq 0 \quad\forall x \in S.
  \end{equation}
\end{proposition}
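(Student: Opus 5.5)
The plan is to use Clarke's exact penalization: it turns the constrained minimum into an unconstrained one, after which the listed calculus rules for $\partial_c$ apply directly.

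Let $L$ be a Lipschitz constant of $f$ on a ball $B_{2r}(\overbar{x})$. Shrink $r$ so that $\overbar{x}$ minimizes $f$ on $S\cap B_{2r}(\overbar{x})$. Set
\begin{equation}
  g := f + L\, d_S, \qquad d_S(y) := \inf_{s\in S}\norm{y-s}.
\end{equation}

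\textbf{Step 1: $\overbar{x}$ is an unconstrained local minimum of $g$.} Take $y \in B_{r/2}(\overbar{x})$ and $\epsilon>0$ small. Choose $s\in S$ with $\norm{y-s}\le d_S(y)+\epsilon$. Since $d_S(y)\le\norm{y-\overbar{x}}$, we get $\norm{s-\overbar{x}}\le 2\norm{y-\overbar{x}}+\epsilon < 2r$. The Lipschitz bound on $B_{2r}(\overbar{x})$ and local minimality then give
\begin{equation}
  f(y)\ \ge\ f(s)-L\norm{y-s}\ \ge\ f(\overbar{x})-L\,d_S(y)-L\epsilon .
\end{equation}
Letting $\epsilon\to0$ yields $g(y)\ge f(\overbar{x}) = g(\overbar{x})$, because $d_S(\overbar{x})=0$. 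This is the only step with real content, and the main point is to control the near-minimizer $s$ so that it stays in the region where $f$ is Lipschitz and $\overbar{x}$ is minimal.

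\textbf{Step 2: split the subdifferential.} Since $g$ is Lipschitz near $\overbar{x}$, the listed properties give $0\in\partial_c g(\overbar{x})$. The sum rule and homogeneity then give
\begin{equation}
  0 \in \partial_c f(\overbar{x}) + L\,\partial_c d_S(\overbar{x}).
\end{equation}
So there is $x^*\in\partial_c f(\overbar{x})$ with $-x^*\in L\,\partial_c d_S(\overbar{x})$.

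\textbf{Step 3: identify $\partial_c d_S(\overbar{x})$.} Because $S$ is convex, $d_S$ is a convex $1$-Lipschitz function. For convex locally Lipschitz functions the Clarke subdifferential coincides with the convex-analysis subdifferential (a standard fact, e.g.\ \cite{Schirotzek:2007:nonsmooth-analysis}). Hence any $\xi\in\partial_c d_S(\overbar{x})$ satisfies
\begin{equation}
  \brt{x-\overbar{x},\xi}\ \le\ d_S(x)-d_S(\overbar{x}) = 0 \quad \text{for all } x\in S.
\end{equation}
Applying this to $\xi=-x^*/L$ gives $\brt{x-\overbar{x},x^*}\ge0$ for all $x\in S$, which is the claim.

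If $L=0$, then $0\in\partial_c f(\overbar{x})$ already, and $x^*=0$ works. The convention $f=+\infty$ off its domain plays no role, since only values of $f$ near $\overbar{x}$ enter the argument, and these are finite.
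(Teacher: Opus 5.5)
Your proof is correct and complete. In Step~1 the near-projection $s$ stays inside $B_{2r}(\overbar{x})$, so both the Lipschitz bound and the local minimality apply. The sum and scaling rules then give $-x^*\in L\,\partial_c d_S(\overbar{x})$. For the convex $1$-Lipschitz function $d_S$ one has $d_S^{\circ}(\overbar{x};v)=d_S'(\overbar{x};v)\le d_S(\overbar{x}+v)-d_S(\overbar{x})$, which is exactly the inequality your Step~3 needs.

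The paper gives no proof of this proposition. It quotes it as a simplified version of the general Clarke multiplier rule \cite[Theorem~12.4.1]{Schirotzek:2007:nonsmooth-analysis}, see also \cite{Clarke:1976:lagrange-multipliers}. In that rule the conclusion is stated with the Clarke normal cone, $0\in\partial_c f(\overbar{x})+N_S(\overbar{x})$, plus multiplier terms when there are functional constraints. It reduces to the displayed inequality because, for convex $S$, the Clarke normal cone is the normal cone of convex analysis.

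Your argument is the exact-penalization proof that underlies that rule. You use convexity of $S$ to avoid the Clarke normal cone altogether. Beyond the calculus rules listed in Section~\ref{subsec:clarke-subdif}, you only need the fact that Clarke and convex subdifferentials coincide for convex continuous functions; the paper already relies on this for the norm in Proposition~\ref{prop:subdiff-of-norm}.

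What each route buys:
\begin{itemize}
\item The citation gives generality: nonconvex closed $S$, and additional equality and inequality constraints with multipliers.
\item Your proof is short and self-contained, and it shows along the way that closedness of $S$ is never used.
\end{itemize}
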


Note that for unbounded self-adjoint operators, eigenvalues are indexed in
increasing order (from bottom to top). In contrast, for
bounded operators, we will index the eigenvalues in decreasing order
(from top to bottom), starting with $k=1$. The proof of the next proposition follows the arguments of
\cite[Proposition~2.18]{Vinokurov:2025:higher-dim-harm-eigenval} and \cite[Lemma~2.5]{Vinokurov:2025:sym-eigen-val-lms} and uses only
that $\lambda_k(T)$ is an isolated eigenvalue of finite multiplicity.
\begin{proposition}
  \label{prop:subdiff-calc}
  If $S \in \mathfrak{L}_{sa}[H]$ is a bounded operator with an isolated eigenvalue $\lambda_k(S)$ of finite multiplicity, then the
  Clarke subdifferential of $\lambda_k\colon \mathfrak{L}_{sa}[H] \to \R$ at $S$, $\partial_c \lambda_k(S) \subset \mathfrak{N}_{sa}[H] \subset \mathfrak{L}_{sa}[H]^*$,
  is given by
  \begin{equation}\label{abst-clark-subdif}
    \partial_c \lambda_k(S) = \operatorname{co}\,
    \set*{x \otimes x }{
      x \in E_{\lambda_k}(S),\ \norm{x} = 1
    },
  \end{equation}
  where $\operatorname{co} K$ denotes the convex hull of the set $K$.
\end{proposition}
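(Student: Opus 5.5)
The plan is to reduce to the finite-dimensional spectral picture via the Riesz projection and then compute the Clarke directional derivative explicitly. Write $\lambda = \lambda_k(S)$, and let $m$ be its multiplicity and $P$ the orthogonal projection onto $E_\lambda(S)$. Since $\lambda$ is isolated of finite multiplicity, there is a gap $\delta>0$ with $\sigma(S)\cap(\lambda-2\delta,\lambda+2\delta)=\{\lambda\}$. Let $j\ge 0$ be the number of variational eigenvalues strictly above $\lambda$ (counted with multiplicity), so that $\lambda_{j+1}(S)=\dots=\lambda_{j+m}(S)=\lambda$ and $j+1\le k\le j+m$.

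For $\tilde S$ in a small norm-ball around $S$ (radius $<\delta/2$), the Riesz projection $P(\tilde S)=\frac{1}{2\pi i}\oint_{|z-\lambda|=\delta}(z-\tilde S)^{-1}\,dz$ is analytic in $\tilde S$ and has rank $m$. By the min-max characterization together with the stability of the spectrum in the complement of the circle, exactly $j$ variational eigenvalues of $\tilde S$ exceed $\lambda+\delta$, and $\lambda_{j+1}(\tilde S),\dots,\lambda_{j+m}(\tilde S)$ are the eigenvalues of the compression $\tilde S|_{\operatorname{ran}P(\tilde S)}$. To pass to a fixed space, transport with the standard unitary (Kato) $U(\tilde S)$, analytic in $\tilde S$ with $U(S)=I$ and $U(\tilde S)P U(\tilde S)^{-1}=P(\tilde S)$. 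This yields a $C^1$ map $\tilde S\mapsto A(\tilde S):=U^{-1}\tilde S U|_{E_\lambda}$ into Hermitian operators on the $m$-dimensional space $E_\lambda(S)$, with $A(S)=\lambda I$ and $\d_S A(K)=PKP|_{E_\lambda}$, since the contributions from differentiating $U$ cancel on $E_\lambda$ because $S$ acts there as $\lambda I$. Hence $\lambda_k(\tilde S)=\lambda_{k-j}(A(\tilde S))$ near $S$.

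Now apply the chain rule, Proposition~\ref{prop:clarke-compose}: $\partial_c\lambda_k(S)\subset \d_SA^*[\partial_c\lambda_{k-j}(\lambda I)]$. At a scalar matrix, $\lambda_{i}$ on $m\times m$ Hermitian matrices has Clarke subdifferential equal to the density matrices $\operatorname{co}\{x\otimes x : x\in E_\lambda,\ \|x\|=1\}$. For the inclusion ``$\subset$'', note that $\lambda_i$ is a difference of convex functions (sums of top eigenvalues), so every gradient at a nearby differentiable point, where eigenvalues are simple, is $y\otimes y$ for a unit eigenvector $y$; by the Clarke gradient formula in finite dimension, the subdifferential is the convex hull of limits of these. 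For the reverse inclusion, take any unit $x$ and matrices $\lambda I + \epsilon(x\otimes x)$ or $\lambda I-\epsilon(x\otimes x)$ (chosen according to whether $i=1$ or $i>1$, so that $x$ becomes a simple eigenvector at position $i$ after a small generic perturbation of the complement); the gradients there equal $x\otimes x$. Pulling back along $\d_SA^*$, which embeds $\tau\mapsto P\tau P$, gives the containment of $\partial_c\lambda_k(S)$ in the stated convex hull.

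For the reverse inclusion in $\mathfrak{L}_{sa}[H]$ itself, I would compute $\lambda_k^\circ(S;K)\ge \langle Kx,x\rangle$ for each unit $x\in E_\lambda$ directly. Take $\tilde S_t = S+t\,\epsilon'(x\otimes x)$-type base points approaching $S$, at which $x$ is a simple eigenvector placed at index $k$ and $\lambda_k$ is differentiable with derivative $\langle Kx,x\rangle$ in direction $K$; then letting the base point tend to $S$ gives the bound. Convexity and closedness finish the argument, since the convex hull of a compact set in a finite-dimensional space is already closed. The main obstacle is the careful index bookkeeping, namely arranging simple eigenvectors at the correct position $k-j$ within the cluster, together with justifying the derivative of $A$. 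To keep the latter clean, I would avoid $U$ and use instead the first-order expansion $P(\tilde S)\tilde S P(\tilde S)$ restricted and compared via $P$.
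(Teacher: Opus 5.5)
Your argument is correct and follows the approach the paper relies on. The paper does not write this proof out; it defers to earlier arguments which, as it notes, use only that $\lambda_k(S)$ is an isolated eigenvalue of finite multiplicity. That is the same reduction you make: localize to the $m$-dimensional cluster through the analytic Riesz projection, then apply the chain rule and the finite-dimensional formula. Your direct lower bound $\lambda_k^{\circ}(S;K)\ge\langle Kx,x\rangle$ correctly supplies the inclusion $\supset$, which the one-sided chain rule alone cannot give.

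For the index bookkeeping you flag, take base points $S+tD$, where $e_1=x,e_2,\dots,e_m$ is an orthonormal basis of $E_{\lambda_k}(S)$ and $D=\sum_{i=1}^m d_i\, e_i\otimes e_i$ with distinct $d_i$ and $d_1$ the $(k-j)$th largest. Perturbing by $S\pm t\,x\otimes x$ alone only makes $x$ a simple eigenvector at index $j+1$ or $j+m$.
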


\subsection{Dual functionals on \texorpdfstring{$\mathfrak{L}[H]$}{Lg}}\label{sec:operator-dual}
In this section, we use $T^* \in \mathfrak{L}[H]$ to denote the Hilbert-space adjoint of an operator $T \in \mathfrak{L}[H]$.

We have $\mathfrak{L}_{sa}[H] = \set{T \in \mathfrak{L}[H]}{T^* = T}$, and $\mathfrak{L}[H] = \mathfrak{L}_{sa}[H] \otimes_{\R} \C$. Indeed,
under the projection $\mathrm{Re} \colon \mathfrak{L}[H] \to \mathfrak{L}_{sa}[H]$, $T \mapsto \frac{1}{2}(T + T^*)$,
every operator $T \in \mathfrak{L}[H]$ admits a decomposition $T = \mathrm{Re} T + i \mathrm{Re} (i^{-1}T)$; moreover,
$\norm{T}$ is equivalent to $\sup_{\abs{\lambda}=1}\norm{\mathrm{Re} (\lambda T)}$. One readily checks that
\begin{equation}\label{eq:self-adj-func}
  \mathfrak{L}_{sa}[H]^* = \set*{\phi \in \mathfrak{L}[H]^*}{\phi^* = \phi},
\end{equation}
where $\phi^*(T) := \overbar{\phi(T^*)}$ is the involution on $\mathfrak{L}[H]^*$ induced by
the involution on $\mathfrak{L}[H]$.

Consequently, every $\psi \in \mathfrak{L}[H]^*$ can be written as $\psi = \mathrm{Re} \psi + i \mathrm{Re} (i^{-1}\psi)$, and
$\mathrm{Re} \psi \in \mathfrak{L}_{sa}[H]^*$ is a self-adjoint functional (hence real-valued on $\mathfrak{L}_{sa}[H]$).
Furthermore, every $\phi \in \mathfrak{L}_{sa}[H]^*$ can be decomposed uniquely as the difference of two positive functionals
(a functional $\phi$ is positive if $T \geq 0 \implies \phi(T) \geq 0$):
\begin{equation}
  \phi = \phi_+ - \phi_-
  \quad\text{such that}\quad
  \norm{\phi} = \norm{\phi_+} + \norm{\phi_-};
\end{equation}
see \cite[Theorem~4.3.6 and Remark~4.3.12]{Kadison-Ringrose:1997:operator-algebras-vol-I}.
For a self-adjoint functional $\phi \in \mathfrak{L}_{sa}[H]^*$,
we denote by $\abs{\phi} \in \mathfrak{L}_{sa}[H]^*$ the functional $\abs{\phi} := \phi_+ + \phi_-$. Hence $\phi \leq \abs{\phi}$.

Recall that every Banach space $E$ embeds isometrically into its bidual $\iota_E\colon E \hookrightarrow E^{**}$. By duality, there is also
the canonical projection $(\iota_{E^*}\circ\iota_E^*) \colon E^{***} \twoheadrightarrow  E^* \hookrightarrow E^{***}$. It then follows
that $E^{***} \approx E^* \oplus E^{\perp}$, where $E^{\perp} \subset E^{***}$ is the subspace of functionals vanishing on $E$.
\begin{remark}\label{rem:m-ideals}
  If $E^{***} = E^* \oplus_1 E^{\perp}$, the subspace $E \subset E^{**}$ is called an \emph{M-ideal}. This holds whenever
  $E = \mathfrak{K}[H]$ is the space of compact operators on a Hilbert space $H$ (with $E^{**} = \mathfrak{L}[H]$);
  see \cite[Example~I.1.4(d)]{Harmand-Werner-Werner:1993:m-ideals} or \cite[Theorem~III.2.14]{Takesaki:2002:operator-algebras}. The same decomposition holds for self-adjoint operators
  by~\eqref{eq:self-adj-func}.
\end{remark}

A positive functional
$\phi$ such that $\norm{\phi} = 1$ is called a \emph{state}. Note that for a positive functional, $\phi(1) = \norm{\phi}$, since
$1 \geq T$ for every self-adjoint $T$ with $\norm{T} \leq 1$. Observe that for every state $\phi$,
the sesquilinear form $(A,B) \mapsto \phi(B^*A)$, where $A,B \in \mathfrak{L}[H]$, is Hermitian and nonnegative definite.
Consequently, the Cauchy--Schwarz inequality yields
\begin{equation}\label{ineq:state-cauchy}
  \abs{\phi(A^*B)}^2 \leq \phi(A^*A) \phi(B^*B).
\end{equation}

Let $H_1\subset H$ be a closed linear subspace of $H$. We say that a functional $\psi \in \mathfrak{L}[H]^*$ is \emph{supported on } $H_1$
and write $\supp \psi \subset H_1$ if
\begin{equation}\label{eq:state-support}
  \psi(T) = \psi(P_{H_1}TP_{H_1}) \quad \forall T \in \mathfrak{L}[H],
\end{equation}
where $P_{H_1}$ is the orthogonal projection onto $H_1$. Equivalently, $\psi$ may be regarded as an element of $\mathfrak{L}[H_1]^*$,
extended to $\mathfrak{L}[H]$ by~\eqref{eq:state-support}.
\begin{proposition}\label{prop:max-func}
  For a positive operator $S \in \mathfrak{L}_+[H]\setminus\brc{0}$, consider the set of maximizing functionals
  \begin{equation}
    M_S := \set*{\phi \in \mathfrak{L}_{sa}[H]^*}{\norm{\phi} = 1,\ \phi(S) = \norm{S}}.
  \end{equation}
  Let $E_{(\lambda_1 - \epsilon, \lambda_1]}$ be the image of the spectral projection $\chi_{(\lambda_1 - \epsilon, \lambda_1]}(S)$, where
  $\lambda_1 = \norm{S}$. Then
  \begin{equation}\label{eq:max-states-desc}
    M_S = \set*{\phi \in \mathfrak{L}_{sa}[H]^*}{\norm{\phi} = 1,\ \phi \geq 0,\
    \supp \phi \subset E_{(\lambda_1 - \epsilon, \lambda_1]}\ \forall \epsilon > 0}.
  \end{equation}
\end{proposition}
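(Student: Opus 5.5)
We prove the two inclusions separately, writing $P_\epsilon := \chi_{(\lambda_1-\epsilon,\lambda_1]}(S)$ and $Q_\epsilon := 1 - P_\epsilon$. Since $S \ge 0$ and $\lambda_1 = \norm{S} = \max\sigma(S)$, the projection $P_\epsilon$ commutes with $S$, and $Q_\epsilon S \le (\lambda_1-\epsilon)Q_\epsilon$.

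\emph{The set on the right is contained in $M_S$.} Let $\phi$ be a state with $\supp\phi\subset E_{(\lambda_1-\epsilon,\lambda_1]}$ for every $\epsilon>0$. Then
\[
\phi(S)=\phi(P_\epsilon S P_\epsilon) \quad\text{and}\quad (\lambda_1-\epsilon)P_\epsilon \le P_\epsilon S P_\epsilon \le \lambda_1 P_\epsilon .
\]
Positivity of $\phi$ gives $(\lambda_1-\epsilon)\phi(P_\epsilon)\le \phi(S)\le\lambda_1\phi(P_\epsilon)$. The support condition applied to $T=1$ gives $\phi(P_\epsilon)=\phi(1)=\norm{\phi}=1$. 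Hence $\lambda_1-\epsilon\le\phi(S)\le\lambda_1$ for all $\epsilon>0$, so $\phi(S)=\norm{S}$ and $\phi\in M_S$.

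\emph{$M_S$ is contained in the set on the right.} Let $\phi\in M_S$.

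First we show that $\phi\ge0$. Write $\phi=\phi_+-\phi_-$ with $\norm{\phi}=\norm{\phi_+}+\norm{\phi_-}$. Then
\[
\lambda_1=\phi(S)\le\phi_+(S)\le\norm{\phi_+}\norm{S}=\norm{\phi_+}\lambda_1 .
\]
Since $S\neq0$, this forces $\norm{\phi_+}\ge1$, hence $\norm{\phi_-}=0$ and $\phi$ is a state.

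Next, $\phi(P_\epsilon)=1$. From $S\le\lambda_1P_\epsilon+(\lambda_1-\epsilon)Q_\epsilon=\lambda_1-\epsilon Q_\epsilon$ we get $\lambda_1=\phi(S)\le\lambda_1-\epsilon\phi(Q_\epsilon)$. Therefore $\phi(Q_\epsilon)=0$, i.e.\ $\phi(P_\epsilon)=1$.

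Finally we derive the support property. This step needs a little care, since $\phi$ is only a functional on $\mathfrak{L}_{sa}[H]$ and not normal. Extend $\phi$ complex-linearly to $\mathfrak{L}[H]$ via \eqref{eq:self-adj-func}; the extension is still a positive functional. For any $T\in\mathfrak{L}[H]$, decompose
\[
T=P_\epsilon TP_\epsilon+Q_\epsilon T P_\epsilon+TQ_\epsilon .
\]
By \eqref{ineq:state-cauchy} with $A=Q_\epsilon$, $B=TP_\epsilon$,
\[
\abs{\phi(Q_\epsilon TP_\epsilon)}^2\le\phi(Q_\epsilon)\,\phi(P_\epsilon T^*TP_\epsilon)=0 .
\]
Similarly, with $A=T^*$, $B=Q_\epsilon$,
\[
\abs{\phi(TQ_\epsilon)}^2\le\phi(TT^*)\,\phi(Q_\epsilon)=0 .
\]
Hence $\phi(T)=\phi(P_\epsilon TP_\epsilon)$, which is exactly $\supp\phi\subset E_{(\lambda_1-\epsilon,\lambda_1]}$. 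Since $\epsilon>0$ was arbitrary, $\phi$ lies in the set on the right.

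The only step requiring attention is this last one, specifically checking that \eqref{ineq:state-cauchy} applies to the complexified state. This follows from the positivity of the form $(A,B)\mapsto\phi(B^*A)$, as noted earlier in the paper.
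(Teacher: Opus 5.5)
Your proof is correct and follows the paper's argument step by step. You get positivity from the Jordan decomposition $\phi=\phi_+-\phi_-$, obtain $\phi(Q_\epsilon)=0$ from $\lambda_1 - S \ge \epsilon Q_\epsilon$, use the Cauchy--Schwarz inequality \eqref{ineq:state-cauchy} to kill the off-diagonal terms, and for the converse squeeze $\phi(S)=\phi(P_\epsilon S P_\epsilon)$ between $\lambda_1-\epsilon$ and $\lambda_1$. Your differences are cosmetic tidyings of the same route: the three-term decomposition $T = P_\epsilon T P_\epsilon + Q_\epsilon T P_\epsilon + T Q_\epsilon$, and the precise inequality $(\lambda_1-\epsilon)P_\epsilon \le P_\epsilon S P_\epsilon \le \lambda_1 P_\epsilon$ together with $\phi(P_\epsilon)=1$, in place of the paper's shorthand $\lambda_1-\epsilon\le PSP\le\lambda_1$.
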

\begin{proof}
  If $\phi \in M_S$, then $\norm{S} =\phi(S) \leq \abs{\phi}(S) \leq \norm{\phi} \norm{S} = \norm{S}$. Therefore, $\phi_-(S) = 0$.
  We then have that $\norm{S} =\phi_+(S) \leq \norm{\phi_+} \norm{S} \leq \norm{S}$. Thus, $\norm{\phi_+} = 1$, which yields $\phi_- = 0$.
  Thus $M_S$ lies in the set of states.

  Let $P = \chi_{(\lambda_1 - \epsilon, \lambda_1]}(S)$ and $Q = 1 - P$; then $\lambda_1 - S \geq \epsilon Q$. Hence,
  $0\leq \epsilon \phi(Q) \leq \lambda_1\phi(1) - \phi(S)= 0$, so $\phi(Q) = 0$. Given $T \in \mathfrak{L}[H]$, write
  $T = PTP + QTP + PTQ + QTQ$ and use the Cauchy--Schwarz inequality~\eqref{ineq:state-cauchy} to conclude that $\phi$ vanishes on the last three terms.
  This proves the inclusion $M_S \subset \cdots$.

  To see that the opposite inclusion holds, take a state $\phi$ belonging to the right-hand side of \eqref{eq:max-states-desc}. Then, for every $\epsilon > 0$, we have
  $\lambda_1 - \epsilon \leq PS P \leq \lambda_1$, so $\phi(S) = \phi(PSP) \in [\lambda_1 - \epsilon, \lambda_1]$.
\end{proof}
By \cite[Theorem~4.3.9(ii-iii)]{Kadison-Ringrose:1997:operator-algebras-vol-I},
the weak-$*$ continuous states (that is, the positive nuclear operators of trace $1$) are weak-$*$ dense in the set of all states. In other words,
\begin{equation}
  \set*{\phi \in \mathfrak{L}_{sa}[H]^*}{\norm{\phi} = 1,\ \phi \geq 0} = \overline{\operatorname{co}}^{w^*}\set*{x\otimes x}{x \in H,\ \norm{x} = 1}.
\end{equation}
Applying this observation to $H = E_{(\lambda_1 - \epsilon, \lambda_1]}$, we obtain
\begin{corollary}\label{cor:max-func}
  In the context of Proposition~\ref{prop:max-func},
  \begin{equation}
    M_S = \bigcap_{\epsilon > 0} \overline{\operatorname{co}}^{w^*}\set*{x\otimes x}{x \in E_{(\lambda_1 - \epsilon, \lambda_1]},\ \norm{x} = 1}.
  \end{equation}
\end{corollary}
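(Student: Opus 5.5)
The statement to prove is Corollary~\ref{cor:max-func}, the identity
\[
M_S=\bigcap_{\epsilon>0}\overline{\operatorname{co}}^{w^*}\set*{x\otimes x}{x\in E_{(\lambda_1-\epsilon,\lambda_1]},\ \norm{x}=1}.
\]
The plan is to combine Proposition~\ref{prop:max-func} with the density of normal states among all states, applied on each subspace $H_\epsilon := E_{(\lambda_1-\epsilon,\lambda_1]}$. Fix $\epsilon>0$ and let $P_\epsilon$ be the spectral projection onto $H_\epsilon$. First identify the set
\[
\mathcal{S}_\epsilon:=\set*{\phi\in\mathfrak{L}_{sa}[H]^*}{\norm{\phi}=1,\ \phi\geq 0,\ \supp\phi\subset H_\epsilon}
\]
with the state space of $\mathfrak{L}[H_\epsilon]$. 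The identification sends a state $\psi$ on $\mathfrak{L}[H_\epsilon]$ to $T\mapsto\psi(P_\epsilon T P_\epsilon)$. This map preserves positivity and norm, since $\phi(1)=\psi(1_{H_\epsilon})$. It is also weak-$*$ continuous, being the adjoint of the compression $T\mapsto P_\epsilon TP_\epsilon$. Proposition~\ref{prop:max-func} then reads $M_S=\bigcap_{\epsilon>0}\mathcal{S}_\epsilon$. It therefore suffices to show, for each fixed $\epsilon$,
\[
\mathcal{S}_\epsilon=\overline{\operatorname{co}}^{w^*}\set*{x\otimes x}{x\in H_\epsilon,\ \norm{x}=1}.
\]
Here $x\otimes x$ denotes the vector state $T\mapsto\brt{Tx,x}$.

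For the inclusion ``$\supset$'': each $x\otimes x$ with $x\in H_\epsilon$ and $\norm{x}=1$ is a state supported on $H_\epsilon$, because $P_\epsilon x=x$. The set $\mathcal{S}_\epsilon$ is convex. It is also weak-$*$ closed, because each of its defining conditions is weak-$*$ closed: $\phi(T)\ge0$ for $T\ge0$, $\phi(1)=1$ (which, given positivity, is equivalent to $\norm{\phi}=1$), and $\phi(T)=\phi(P_\epsilon TP_\epsilon)$ for all $T$. Hence $\mathcal{S}_\epsilon$ contains the weak-$*$ closed convex hull.

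For the inclusion ``$\subset$'': use the quoted density result \cite[Theorem~4.3.9]{Kadison-Ringrose:1997:operator-algebras-vol-I} on the Hilbert space $H_\epsilon$. Every state $\psi$ of $\mathfrak{L}[H_\epsilon]$ is a weak-$*$ limit of convex combinations of vector states $x\otimes x$ with $x\in H_\epsilon$. Push this approximation forward by the weak-$*$ continuous affine map $\psi\mapsto\psi(P_\epsilon\cdot P_\epsilon)$; it lands in $\overline{\operatorname{co}}^{w^*}$ of vector states with vectors in $H_\epsilon$. Intersecting over $\epsilon>0$ gives the corollary.

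The main point requiring care is the non-degenerate case, where one needs $H_\epsilon\neq\{0\}$ so that the density result applies to a nonzero Hilbert space. This holds because $\lambda_1=\norm{S}\in\sigma(S)$. The only other point to verify is the weak-$*$ continuity of the compression adjoint used in ``$\subset$'', and that is immediate from its definition.
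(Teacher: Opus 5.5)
Your proposal is correct and follows the paper's own argument: Proposition~\ref{prop:max-func} combined with the Kadison--Ringrose weak-$*$ density of convex combinations of vector states in the state space, applied on each $E_{(\lambda_1-\epsilon,\lambda_1]}$. You also spell out two steps the paper leaves implicit, namely the identification of states supported on $E_{(\lambda_1-\epsilon,\lambda_1]}$ with states of $\mathfrak{L}[E_{(\lambda_1-\epsilon,\lambda_1]}]$ via the adjoint of the compression $T\mapsto P_\epsilon TP_\epsilon$, and the weak-$*$ closedness of each $\mathcal{S}_\epsilon$.
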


\subsection{Measures as bilinear forms}\label{sec:meas-as-forms}
\begin{lemma}\label{lem:unstable-ring}
  Let $\Omega \subset M$ be a domain in a Riemannian manifold $(M,g)$, and let $0\neq\mu \in \mathcal{M}_+^{c}(\overbar{\Omega})$ with $\lambda_k(\Omega, \mu) =1$ for some $k > 0$.
  Then, for every point $p \in \overbar{\Omega}$, there exists a neighborhood $U$ such that for all
    $\phi \in \D(\overbar{\Omega})$ with $\supp \phi \subset U \cap\overbar{\Omega}$,
    one has
    \begin{equation}\label{eq:unstable-ring}
      \int_{\overbar{\Omega}} \phi^2 \mathrm{d}\mu \leq \int_{\Omega} \abs{\mathrm{d}\phi}^2.
    \end{equation}
\end{lemma}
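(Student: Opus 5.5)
We argue by contradiction, combining the min-max characterization \eqref{eq:meas-eigen} with the fact that $\mu$ has no atoms. Suppose that for some $p \in \overbar{\Omega}$ no such neighborhood exists. Then for every neighborhood $U$ of $p$ there is $\phi_U \in \D(\overbar{\Omega})$ with $\supp \phi_U \subset U\cap\overbar{\Omega}$ and
\begin{equation}
  \int_{\overbar{\Omega}} \phi_U^2 \d\mu > \int_{\Omega} \abs{\d\phi_U}^2 .
\end{equation}
The goal is to build $k+1$ such functions with pairwise disjoint supports. Their span $V_{k+1}$ is $(k+1)$-dimensional, and the two quadratic forms split as sums over the disjoint pieces. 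Hence every nonzero $\phi \in V_{k+1}$ has Rayleigh quotient $<1$. By \eqref{eq:meas-eigen} this gives $\lambda_k(\mu) < 1$, a contradiction. Since $V_{k+1}$ is finite dimensional, the strict inequality on the unit sphere is uniform, so the supremum over $V_{k+1}$ is indeed $<1$.

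To get disjoint supports we use a ring construction around $p$. The key observation is that the bad functions can be taken with supports avoiding a small ball around $p$, supported in annuli $B_{R}(p)\setminus \overbar{B_r(p)}$. This is where nonatomicity enters: $\mu(\brc{p})=0$, so $\mu(B_r(p)) \to 0$ as $r\to0$.

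\textbf{Localizing away from $p$.} Take a bad $\phi = \phi_U$ supported in $B_R(p)$, and choose a logarithmic cutoff $\eta_r$ with the following properties: $\eta_r = 0$ on $B_{r}(p)$, $\eta_r=1$ outside $B_{\sqrt r}(p)$ (or use $r^2$ versus $r$), and $\int \abs{\d \eta_r}^2 \to 0$ as $r\to0$. In dimension $d\ge2$ this is the standard fact that points have zero $H^1$-capacity. Then $\eta_r\phi \to \phi$ in $H^1$, because $\phi$ is Lipschitz and bounded:
\begin{equation}
  \int \abs{\d(\eta_r\phi)-\d\phi}^2 \le 2\norm{\phi}_\infty^2\int\abs{\d\eta_r}^2 + 2\norm{\d\phi}_\infty^2 \Vol(B_{\sqrt r}) \to 0.
\end{equation}
On the measure side, $\int \phi^2(1-\eta_r^2)\d\mu \le \norm{\phi}_\infty^2\,\mu(B_{\sqrt r}(p)) \to 0$ by nonatomicity. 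Thus for small $r$ the function $\eta_r\phi$ is still bad, and it is supported in an annulus $B_R\setminus B_r$.

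\textbf{Iterating.} Start with a bad $\phi_1$ supported in $B_{R_1}(p)$ and localize it to $B_{R_1}\setminus B_{r_1}$. Next take a bad $\phi_2$ supported in $B_{R_2}$ with $R_2<r_1$, and localize it to $B_{R_2}\setminus B_{r_2}$. Repeating this $k+1$ times produces functions with disjoint supports, and the contradiction above follows. Products of Lipschitz functions with Lipschitz cutoffs remain in $\D(\overbar{\Omega})$, so all constructed functions are admissible.

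\textbf{Main obstacle.} The delicate step is the capacity cutoff near a point lying on $\partial\Omega$, where $\Omega$ is merely continuous. There we only need an upper bound $\int_{\Omega}\abs{\d\eta_r}^2 \le \int_M \abs{\d \eta_r}^2$, so the cutoff can be built in $M$ and restricted. In dimension $2$ the logarithmic cutoff $\eta_r=\log(\rho/r^2)/\log(1/r)$ on $r^2<\rho<r$ has energy $\sim 1/\log(1/r)\to0$, using normal coordinates in which the metric is comparable to the Euclidean one. In $d\ge3$ a linear cutoff on $(r,2r)$ suffices, with energy $\sim r^{d-2}$.
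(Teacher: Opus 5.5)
Your proof is correct and follows essentially the same route as the paper. The paper also first reduces to test functions supported away from $p$, using that points have zero capacity and $\mu(\{p\})=0$. It then builds $k+1$ functions violating the inequality on disjoint shrinking annuli $B_{r_i}(p)\setminus\overline{B_{r_{i+1}}(p)}$, which contradicts the min-max characterization; you just write out the capacity cutoff and the splitting of the two quadratic forms in more detail.
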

\begin{proof}
  Note that it suffices to prove \eqref{eq:unstable-ring} for $\supp \phi \subset (U\setminus\brc{p}) \cap\overbar{\Omega}$, since
  discrete sets have zero capacity and $\mu(\brc{p}) = 0$.

  Arguing by contradiction, we can find a sequence of functions $\phi_i \in \D(\overbar{\Omega})$ with disjoint supports
  $\supp \phi_i \subset U_i \cap\overbar{\Omega}$, where $U_i := B_{r_i}(p)\setminus\overline{B_{r_{i+1}}(p)}$ and $r_i \searrow 0$, such that
  \begin{equation}
    \int_{\Omega} \abs{\mathrm{d}\phi_i}^2 - \int_{\overbar{\Omega}} \phi_i^2 \mathrm{d}\mu < 0.
  \end{equation}
  This contradicts the variational characterization of $\lambda_k(\Omega, \mu)=1$ once $k+1$ such functions have been constructed.
  Therefore, \eqref{eq:unstable-ring} holds if we take $U := B_{r_{k+1}}(p)$.
\end{proof}
\begin{proposition}\label{prop:meas-bilinear}
    Let $\Omega \subset M$ be a bounded domain in a Riemannian manifold $(M,g)$, and let $0\neq\mu \in \mathcal{M}_+^{c}(\overbar{\Omega})$.
    If $\D(\overbar{\Omega})$ is dense in $H^1(\Omega)$ and $\lambda_k(\Omega, \mu) \neq 0$ for some $k > 0$, then the measure
    $\mu$ induces a continuous bilinear form on $H^1(\Omega)$, that is, $\mu \in \mathfrak{Bil}[H^1(\Omega)]$.
\end{proposition}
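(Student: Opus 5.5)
We need to show $\int \phi^2\,d\mu \le C\|\phi\|_{H^1}^2$ for all $\phi \in \D(\overbar{\Omega})$. Once this holds, density of $\D(\overbar{\Omega})$ in $H^1(\Omega)$ extends the form $\phi\mapsto\int\phi^2\,d\mu$ to a bounded quadratic form on $H^1(\Omega)$. Polarization then gives the bounded bilinear form. More precisely, the map $\D(\overbar{\Omega})\to L^2(\overbar{\Omega},\mu)$ extends continuously.

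\textbf{Normalization.} First I reduce to $\lambda_k(\mu)=1$. Since $\mu\neq 0$ is nonatomic, $L^2(\mu)$ is infinite dimensional, so $\lambda_k(\mu)<\infty$. The hypothesis $\lambda_k\neq0$ gives $\lambda_k(\mu)\in(0,\infty)$. Rescaling, $\lambda_k(t\mu)=t^{-1}\lambda_k(\mu)$, so we may assume $\lambda_k(\Omega,\mu)=1$. Then Lemma~\ref{lem:unstable-ring} applies at every point $p\in\overbar{\Omega}$.

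\textbf{Localization.} By compactness of $\overbar{\Omega}$, I cover it by finitely many neighborhoods $U_1,\dots,U_N$ furnished by Lemma~\ref{lem:unstable-ring}. I take a smooth partition of unity $\{\chi_j\}$ on a neighborhood of $\overbar{\Omega}$ subordinate to this cover. For $\phi\in\D(\overbar{\Omega})$, the function $\chi_j\phi$ lies in $\D(\overbar{\Omega})$ with support in $U_j\cap\overbar{\Omega}$. A technical point: the lemma is stated for supports in $U\cap\overbar{\Omega}$, and I take the $\chi_j$ with compact support in $U_j$, so this is fine. Using $\phi^2\le N\sum_j \chi_j^2\phi^2$ (Cauchy--Schwarz, since $\sum\chi_j=1$), or simply choosing $\sum\chi_j^2=1$, I obtain
\[
\int_{\overbar{\Omega}}\phi^2\,d\mu \le N\sum_j\int_{\overbar{\Omega}}(\chi_j\phi)^2\,d\mu \le N\sum_j\int_\Omega|\d(\chi_j\phi)|^2 \le C\int_\Omega\bigl(|\d\phi|^2+\phi^2\bigr),
\]
where $C$ depends on $N$ and $\max_j\|\d\chi_j\|_\infty$.

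\textbf{Extension.} The map $\phi\mapsto\phi$ from $(\D(\overbar{\Omega}),\|\cdot\|_{H^1})$ to $L^2(\mu)$ is bounded and therefore extends uniquely to $H^1(\Omega)$. This yields the bilinear form $(u,v)\mapsto\int uv\,d\mu$ with norm at most $C$. It is well defined on $H^1$ independently of approximating sequences, because the extension is continuous.

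\textbf{Where the weight lies.} The real content is in Lemma~\ref{lem:unstable-ring}, which is assumed. The remaining step needing care is the reduction to $0<\lambda_k<\infty$, together with checking that products $\chi_j\phi$ stay in $\D(\overbar{\Omega})$. Both are routine.
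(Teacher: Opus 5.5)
Your proposal is correct and follows essentially the same route as the paper: apply Lemma~\ref{lem:unstable-ring} locally and glue the local estimates with a partition of unity. The one difference is that the paper takes $\sum_i \eta_i^2 = 1$, so the cross term $\frac{1}{2}\sum_i\int_\Omega\langle \d\eta_i^2, \d\phi^2\rangle$ vanishes and the Dirichlet energy keeps coefficient exactly one, whereas your cruder bound puts a constant in front of both terms, which is still enough for boundedness on $H^1(\Omega)$.
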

\begin{proof}
    By Lemma~\ref{lem:unstable-ring}, every point $p \in \overbar{\Omega}$ has a neighborhood $U$ such that for all
    $\phi \in \D(\overbar{\Omega})$ with $\supp \phi \subset U \cap\overbar{\Omega}$, one has
    \begin{equation}
        \lambda_k(\Omega, \mu)\int_{\overbar{\Omega}} \phi^2 \mathrm{d}\mu \leq \int_{\Omega} \abs{\mathrm{d}\phi}^2.
    \end{equation}
    A partition-of-unity argument with $\sum_i \eta_i^2 = 1$,
    $\supp \eta_i \subset U_i$, and $\overbar{\Omega} \subset \bigcup_i U_i$, shows that for all $\phi \in \D(\overbar{\Omega})$,
    \begin{equation}
        \lambda_k(\Omega, \mu)\int_{\overbar{\Omega}} \phi^2 \mathrm{d}\mu \leq \int_{\Omega} \abs{\mathrm{d}\phi}^2
        + \frac{1}{2} \sum_i \int_{\Omega} \brt{\mathrm{d}\eta_i^2, \mathrm{d}\phi^2} + \sum_i \int_{\Omega} \phi^2\abs{\mathrm{d}\eta_i}^2,
    \end{equation}
    where the middle sum vanishes. Hence, there exists a constant $C > 0$ such that
    \begin{equation}
        \lambda_k(\Omega, \mu)\int_{\overbar{\Omega}} \phi^2 \mathrm{d}\mu \leq \int_{\Omega} \abs{\mathrm{d}\phi}^2 + C\int_{\Omega} \phi^2.
    \end{equation}
\end{proof}
Therefore, the canonical map $\D(\overbar{\Omega})\to L^2(\overbar{\Omega},\mu)$ extends uniquely to a continuous linear map $H^1(\Omega) \to L^2(\overbar{\Omega},\mu)$.
Integration with respect to $\mu$ will be understood via this map.

\section{Proofs of the main results}

\subsection{Abstract setting}
\subsubsection{Theorem~\ref{thm:subdiff-calc-all}}
  By replacing $S$ with $S + c\cdot 1$ for sufficiently large $c >0$, we may assume that $S \geq 0$ and $\lambda_k(S) > 0$.

  We begin with the case $k=1$. Then $\lambda_1(S') = \norm{S'}$ for any $S' \in \mathfrak{L}_{sa}[H]$ in a neighborhood of $S$.
  Combining Proposition~\ref{prop:subdiff-of-norm} with Corollary~\ref{cor:max-func}, we prove the theorem for $\lambda_1$. Namely,
  \begin{equation}
    \begin{aligned}
      \partial_c \lambda_1(S) &= \set*{\phi \in \mathfrak{L}_{sa}[H]^*}{\norm{\phi} = 1,\ \phi \geq 0,\ \supp \phi \subset E_{(\lambda_1 - \epsilon, \lambda_1]}(S)\ \forall \epsilon > 0}
      \\                      &= \bigcap_{\epsilon > 0} \overline{\operatorname{co}}^{w^*}\set*{x\otimes x}{x \in E_{(\lambda_1 - \epsilon, \lambda_1]}(S),\ \norm{x} = 1}.
    \end{aligned}
  \end{equation}

  For $k > 1$, denote by $P_S$ the orthogonal projector onto the (finite-dimensional) space $\bigoplus_{\lambda > \lambda_k}E_\lambda$,
  where $E_\lambda = E_\lambda(S)$.
  Set $Q_S := 1 - P_S$.
  Since $P_S$ is given by the contour integral of the resolvent over a curve enclosing all eigenvalues $\lambda > \lambda_k$,
  we see that $P_S$ depends analytically on $S$. In particular,
  the map $\gamma \colon S' \mapsto Q_{S'} S' Q_{S'}$
  is of class $C^1$ on a neighborhood of $S \in \mathfrak{L}_{sa}[H]$.
  Note that the differential of $\gamma$ at $S$ equals
  \begin{equation}\label{eq:proj-diff}
    (\d_S\gamma)(T) = Q_STQ_S - (\d_S P)(T)SQ_S - Q_S S (\d_S P)(T),
  \end{equation}
  and $(\d_S P)(T)$ satisfies $Q_S (\d_S P)(T) Q_S = 0$, as follows from differentiating the identity
  $Q_S^2 = Q_S$ and multiplying by $Q_S$ on both sides.

  Then the condition $0< \lambda_k(S) < \lambda_{k-1}(S)$ implies that
  \begin{equation}
    \lambda_k(S') = \lambda_1(Q_{S'}S'Q_{S'}) = \norm{Q_{S'}S'Q_{S'}}
  \end{equation}
  for $S'$ in a neighborhood of $S$.
  The inclusion $\partial_c \lambda_k(S) \subset \cdots$ now follows from the case $k = 1$,
  Proposition~\ref{prop:clarke-compose}, and \eqref{eq:proj-diff}, since
  $E_{(\lambda_1 - \epsilon, \lambda_1]}(Q_S S Q_S) = E_{(\lambda_k - \epsilon, \lambda_k]}(S)$ and
  $\d_S\gamma^*(\phi) = \phi$ for $\phi$ supported on $im\, Q_S = E_{(-\infty, \lambda_k]}(S)$:
  \begin{equation}\label{eq:support-calc-compact}
    \brt{T, \d_S\gamma^*(\phi)} = \phi\brr{\d_S\gamma(T)} = \phi\brr{Q_S(\d_S\gamma)(T)Q_S} = \phi\brr{Q_STQ_S} = \phi(T).
  \end{equation}
  To prove the reverse inclusion $\partial_c \lambda_k(S) \supset \cdots$, let $H_1 := E_{(-\infty, \lambda_k]}(S)$ and
  consider the affine map $\iota\colon \mathfrak{L}_{sa}[H_1] \to \mathfrak{L}_{sa}[H]$, $T \mapsto Q_STQ_S + P_S S P_S$. Then $\lambda_1(T) = \lambda_k(\iota(T))$
  for $T$ in a neighborhood of $Q_S S Q_S \in \mathfrak{L}_{sa}[H_1]$, and the chain rule (Proposition~\ref{prop:clarke-compose}) implies
  $\partial_c\lambda_1(Q_S S Q_S) \subset (\d_{Q_S S Q_S} \iota)^*[\partial_c \lambda_k(S)] = \partial_c \lambda_k(S)$, since
  all the states $\phi \in \partial_c \lambda_k(S)$ are supported on $H_1$.

  \subsubsection{Theorem~\ref{thm:subdiff-eigenval-all}}
  Let us prove that $\mathfrak{a} + c \mathfrak{b}$ defines an equivalent inner product on $H$ for some sufficiently large $c>0$.
  Define an equivalent norm $\norm{x}^2 := \mathfrak{a}[x] + \mathfrak{r}[x]$, where $\mathfrak{r}$ is as in Remark~\ref{rem:positive-ess-spec}.
  If there is no such constant $c > 0$, we can find a sequence $\norm{x_n} = 1$ such that
  \begin{equation}\label{ineq:contr-lower-bound}
    \mathfrak{a}[x_n] + n\mathfrak{b}[x_n] < \frac{1}{n}.
  \end{equation}
  Passing to a subsequence if necessary, we may assume that $x_n \oset{w}{\to} x$ with $\norm{x} \leq 1$.
  Since $\mathfrak{r}$ is compact, one sees that
  \begin{equation}\label{ineq:contr-a-x_n}
    1/n > \mathfrak{a}[x_n] = 1 - \mathfrak{r}[x_n] \to 1 - \mathfrak{r}[x] = \mathfrak{a}[x] + (1 - \norm{x}^2).
  \end{equation}
  Hence $\mathfrak{a}[x] \leq 0$.
  Then~\eqref{ineq:contr-lower-bound} also implies
  \begin{equation}
    n\mathfrak{b}[x_n] \leq \norm{x_n}^2 + n\mathfrak{b}[x_n] \leq \frac{1}{n} + \mathfrak{r}[x_n] \leq C.
  \end{equation}
  Hence $0 \leq \mathfrak{b}[x] \leq \liminf_n \mathfrak{b}[x_n] = 0$. By assumption,
  $x \in \ker \mathfrak{b} \cap \set{x\in H}{\mathfrak{a}[x] \leq 0}$ implies $x  = 0$, which contradicts \eqref{ineq:contr-a-x_n}.

  Therefore, there exists $c > 0$ such that $\mathfrak{a} + c \mathfrak{b} \geq \frac{1}{c} \norm{\cdot}^2$.
  We now equip $H$ with the inner product $\mathfrak{a} + c \mathfrak{b}$, and for $(\mathfrak{a}', \mathfrak{b}')$ in a neighborhood of $(\mathfrak{a}, \mathfrak{b}) \in \brr{Herm\,[H]}^{\times 2}$,
  we obtain
  \begin{equation}
    \frac{1}{\lambda_k(\mathfrak{a}',\mathfrak{b}') + c}
    = \sup_{V_{k} \subset D} \inf_{x \in V_{k}\setminus\brc{0}} \frac{\mathfrak{b}'[x]}{(\mathfrak{a}'
    + c \mathfrak{b}')[x]} = \lambda_k\brr{(A'+cB')^{-1/2}B'(A'+cB')^{-1/2}},
  \end{equation}
  where $A',B'$ are the self-adjoint operators associated with the Hermitian forms $\mathfrak{a}',\mathfrak{b}'$ with respect to the inner product $\mathfrak{a} + c \mathfrak{b}$.
  In particular, $B = (\mathfrak{a} + c\mathfrak{b})^{-1}\mathfrak{b}$ and $A + c B = 1$. Moreover, $S := (A'-A)+c(B'-B)$ lies in a neighborhood of $0 \in \mathfrak{L}_{sa}[H]$, so
  $(A'+cB')^{-1/2} = (1 + S)^{-1/2} = 1 - \frac{1}{2} S + \frac{3}{8}S^2 + \cdots$.

  The map $f \colon (\mathfrak{a}',\mathfrak{b}') \mapsto (A'+cB')^{-1/2}B'(A'+cB')^{-1/2}$ is analytic, and we have
  \begin{equation}
    \d_{(\mathfrak{a},\mathfrak{b})} f (\mathfrak{h}_a,\mathfrak{h}_b) = H_b -\frac{1}{2}\brr{(H_a+cH_b) B + B (H_a+ cH_b)},
  \end{equation}
  where $\mathfrak{h}_a$ and $H_a$ are related via the inner product $\mathfrak{a} + c \mathfrak{b}$.
  Since $\lambda_{k-1}(\mathfrak{a},\mathfrak{b}) < \lambda_{k}(\mathfrak{a},\mathfrak{b}) < \infty$, Corollary~\ref{cor:subdiff-composed-eigenval} yields
  \begin{equation}
    \begin{multlined}
      \frac{1}{(\lambda_k + c)^2}
      \partial_c (-\lambda_k)\brr{\mathfrak{a}, \mathfrak{b}}
      \\ \subset \bigcap_{\epsilon > 0} \overline{\operatorname{co}}^{w^*}
        \set*{\brr{- \frac{x\otimes x}{\lambda_k + c}, \frac{\lambda_k x\otimes x}{\lambda_k + c} }}{x\in E_{[\lambda_k, \lambda_k + \epsilon)}(\mathfrak{a}, \mathfrak{b}),\ \mathfrak{b}[x] = \frac{1}{\lambda_k + c}}.
    \end{multlined}
  \end{equation}
  It remains to multiply both sides by $(\lambda_k + c)^2$.
\begin{remark}
  In general, if we define the variational eigenvalues  $\lambda_k(\mathfrak{a}, \mathfrak{b})$ as above and
  $\mathfrak{a}$ is an inner product, then there are two natural ways to construct an operator whose spectrum contains
  $\lambda_k^{-1}(\mathfrak{a}, \mathfrak{b})$. This can be done either by setting
  \begin{equation}
    T_1 = \mathfrak{a}^{-1}\mathfrak{b} = A^{-1}B,
    \quad\text{or}\quad
    T_2 = A^{-1/2}BA^{-1/2},
  \end{equation}
  where $A$ and $B$ are the corresponding self-adjoint operators under the identification $H^* \approx \overbar{H}$. The operator $T_1$ appears to be the more canonical choice.
  However, $T_2$ has the advantage of being self-adjoint. Indeed, $T_1 = A^{-1/2}T_2A^{1/2}$; hence $\sigma(T_1) = \sigma(T_2)$.
\end{remark}

\subsubsection{Corollaries~\ref{cor:subdiff-calc} and~\ref{cor:subdiff-eigenval}}
  Let $\iota \colon \mathfrak{K}_{sa}[H] \hookrightarrow \mathfrak{L}_{sa}[H]$ be the canonical embedding. Then
  the adjoint $\iota^* \colon \mathfrak{L}_{sa}[H]^* \approx \mathfrak{N}_{sa}[H]\oplus \mathfrak{N}_{sa}[H]^{\perp} \to \mathfrak{N}_{sa}[H]$
  acts as the restriction map on $\mathfrak{N}_{sa}[H]$. Write $E_{(\lambda_k - \epsilon, \lambda_k]} = E_{(\lambda_k - \epsilon, \lambda_k]}(S)$, and use analogous notation for the remaining spectral subspaces. By applying Corollary~\ref{cor:subdiff-composed-eigenval} to $K \mapsto \iota(K) + S$, we obtain
  \begin{equation}
    \begin{aligned}
      \partial_c\lambda_k(S + 0) &\subset \bigcap_{\epsilon > 0} \overline{\operatorname{co}}^{w^*}\set*{x\otimes x}{x \in E_{(\lambda_k - \epsilon, \lambda_k]},\ \norm{x} = 1}
      \\                         &\subset \bigcap_{\epsilon > 0} \set*{T \in \mathfrak{N}_{+}[E_{(\lambda_k - \epsilon, \lambda_k]}]}{\norm{T}_{\mathfrak{N}} \leq 1}
      \\                         & = \set*{T \in \mathfrak{N}_{+}[E_{\lambda_k}]}{\norm{T}_{\mathfrak{N}} \leq 1},
    \end{aligned}
  \end{equation}
  where the weak-$*$ closures are taken in $\mathfrak{N}_{sa}[H]$ and $(\d_0 \iota)^* (x\otimes x) = x\otimes x$.
  The last equality follows from the fact that $T \in \bigcap_{\epsilon > 0} \mathfrak{N}[E_{(\lambda_k - \epsilon, \lambda_k]}] \implies
  im\, T \subset \bigcap_{\epsilon > 0} E_{(\lambda_k - \epsilon, \lambda_k]} = E_{\lambda_k}$. This completes the proof of Corollary~\ref{cor:subdiff-calc}.

  Corollary~\ref{cor:subdiff-eigenval} is proved similarly, since we also have
  \begin{equation}
    \t \in \bigcap_{\epsilon > 0}E_{(\lambda_k - \epsilon, \lambda_k]} \widehat{\otimes}_{\pi} \overbar{E}_{(\lambda_k - \epsilon, \lambda_k]} \implies
    im\, \t \subset \bigcap_{\epsilon > 0} E_{(\lambda_k - \epsilon, \lambda_k]} = E_{\lambda_k},
  \end{equation}
  where we identify $E_{(\lambda_k - \epsilon, \lambda_k]} \widehat{\otimes}_{\pi} \overbar{E}_{(\lambda_k - \epsilon, \lambda_k]}
  \approx \mathfrak{N}[E_{(\lambda_k - \epsilon, \lambda_k]}]$
  via the inner product on $H$. Hence, $\t$ admits a spectral decomposition $x_i\in E_{\lambda_k}$, $\t = \sum_i x_i \otimes x_i$.

\subsection{Eigenvalues of measures}
\subsubsection{Theorem~\ref{thm:qualitative-stability}}
  Without loss of generality, we may assume that $\overbar{\lambda}_{k-1}(\mu) < \overbar{\lambda}_k(\mu)$.
  Indeed, if $\overbar{\lambda}_{k-1}(\mu) = \overbar{\lambda}_k(\mu)$ and $\nu \in \mathcal{M}_+^c(\overbar{\Omega})$ is arbitrary, then
  $\overbar{\lambda}_{k-1}(\nu) \leq \overbar{\lambda}_{k}(\nu)
  \leq \overbar{\lambda}_{k}(\mu)  = \overbar{\lambda}_{k-1}(\mu)$, which implies $\mu$ is also a maximizer for $\overbar{\lambda}_{k-1}$, so it suffices to replace $k$ by $k-1$.

  If $\overbar{\lambda}_k(\mu) = \Lambda_k(\Omega,g) > 0$, then $\mu \in \mathfrak{Bil}[H^1(\Omega)]$ by Proposition~\ref{prop:meas-bilinear}. In particular,
  there is a bounded linear map $H^1(\Omega) \to L^2(\mu)$ extending $\id\colon \D(\overbar{\Omega}) \to L^2(\mu)$ and preserving the lattice structure.

  Let us rescale $\mu$ so that $\mu(\overbar{\Omega}) = 1$.
  Consider the function $L^\infty_+(\Omega) \ni \rho \mapsto \overbar{\lambda}_k(\mu + \rho) =
  \lambda_k(\mu + \rho)(1 + \int_{\Omega} \rho)$. If $E_k(\mu) \subset H^1(\Omega)$
  denotes the $k$th eigenspace of $\Delta \phi = \lambda \phi\mu$, then the Leibniz rule, together with Corollary~\ref{cor:subdiff-eigenval} and the chain rule, yields
  \begin{equation}\label{eq:laplace-subdiff}
    \begin{aligned}
      \partial_c(-\overbar{\lambda}_k)(\mu + 0) &\subset \lambda_k \cdot\set*{\sum_i \phi_i^2 - 1}{\phi_i \in E_k(\mu) ,\ \sum_i\int_{\overbar{\Omega}} \phi_i^2 \d\mu \leq 1}
      \\                                        & =  \lambda_k \cdot\set*{\abs{u}^2 - 1}{u \in H^1(\Omega,\ell^2),\ \Delta u = \lambda u\mu,\ \int_{\overbar{\Omega}} \abs{u}^2 \d\mu \leq 1}.
    \end{aligned}
  \end{equation}
  Applying Proposition~\ref{prop:multip-rule} with $S = L^\infty_+(\Omega)$ and $\overbar{x} = 0$, we conclude that there exists $u \in H^1(\Omega, \ell^2)$ such that $\Delta u = \lambda_k u\mu$,
  $\int_{\overbar{\Omega}} \abs{u}^2d\mu \leq 1$, and
  \begin{equation}
    \int_{\Omega} (\abs{u}^2-1)\rho \geq 0 \quad \forall \rho \in L^\infty_+(\Omega).
  \end{equation}
  Therefore, $\abs{u} \geq 1$ in $H^1(\Omega)$ and $L^2(\mu)$. Combining this with $\int_{\overbar{\Omega}} \abs{u}^2\d\mu \leq 1$,
  we see that $\abs{u} = 1$ in $L^2(\mu)$. Since $\d \abs{u} = \frac{u}{\abs{u}} \cdot \d u$ and
  \begin{equation}
    \brt{\d\abs{u}, \d\phi} =
  \brt{\d u, \frac{u}{\abs{u}} \d \phi} = \brt{\d u, \d\brr{\frac{u}{\abs{u}}\phi}} - \phi\brt{\d u,\d\brr{\frac{u}{\abs{u}}}},
  \end{equation}
  where a positive term is subtracted, we obtain the following form of the weak maximum principle:
  \begin{equation}\label{ineq:max-principle}
    \int_{\Omega}\brt{\d\abs{u}, \d\phi} \leq \lambda_k \int_{\overbar{\Omega}} \abs{u}\phi \d\mu \quad \forall \phi \in \D_+(\overbar{\Omega}).
  \end{equation}
  This inequality extends to every $\phi \in H^1_+(\Omega)$; testing it against $\abs{u} - 1 \geq 0$ gives
  \begin{equation}
    0\leq \int_{\Omega} \abs{\d\abs{u}}^2 \leq \lambda_k\int_{\overbar{\Omega}} (\abs{u} - 1)\abs{u}\d\mu = 0.
  \end{equation}
  Thus, $\abs{u} = 1$ in $H^1(\Omega)$. Now, we can test the equation $\Delta u = \lambda_k u \mu$ against $\phi u$, where
  $\phi \in \D(\overbar{\Omega})$. Since $u\cdot \d u = 0$, we obtain
  \begin{equation}
    \int_{\Omega} \phi\abs{\d u}^2 = \lambda_k\int_{\overbar{\Omega}} \phi \d\mu \quad \forall \phi \in \D(\overbar{\Omega}).
  \end{equation}
  Hence, $\lambda_k\mu = \abs{\d u}^2 \in L^1(\Omega)$ and $u$ is harmonic up to the boundary, that is, $\Delta u  = \abs{\d u}^2u$ in $\D(\overbar{\Omega})^*$.

  The interior regularity of $u$ then follows from~\eqref{eq:unstable-ring} and \cite[Lemma~5.5, Theorem~5.14, Corollary~1.3]{Vinokurov:2025:higher-dim-harm-eigenval}.
  When $\partial \Omega = \varnothing$, \cite[Corollary~5.18]{Vinokurov:2025:higher-dim-harm-eigenval} also implies that $im\, u \subset \Sph^n$ for some $\Sph^n \subset \Sph^\infty$.

  In the case when $\partial \Omega \neq \varnothing$, if we additionally assume that $H^1(\Omega)\to L^2(\mu)$ is compact, we automatically obtain $im\, u \subset \Sph^n$ because of the compactness of the operator $T$ from Theorem~\ref{thm:subdiff-eigenval-all}
  and, consequently, finite multiplicity of the eigenvalues. Let $\partial\Omega$ be smooth.
  By considering $w \in H^1(U,\Sph^\infty)$ such that $\supp (w - u|_U) \Subset U \cup \partial\Omega$, as in \cite[Lemma~5.5]{Vinokurov:2025:higher-dim-harm-eigenval},
  and $U$ is from Lemma~\ref{lem:unstable-ring}, one checks that \cite[Lemma~5.5]{Vinokurov:2025:higher-dim-harm-eigenval} generalizes up to the boundary. Hence,
  the harmonic map $u$ is locally energy-minimizing with respect to the free boundary condition $u(\partial\Omega) \subset \Sph^n$.
  By the compactness of $H^1(\Omega)\to L^2(\mu)$ (see \cite[Section~11.9.1]{Mazja:1985:-sobolev-spaces}), we have $\liminf_{r \to 0} r^{2-d}\mu(B_r(p)) = 0$ (we assume $d \geq 2$), and hence
  $u$ is smooth in the interior of $\Omega$ by the standard regularity theory \cite{Hong-Wang:1999:stable-harmonic-maps} (cf. also \cite[Proposition~4.7]{Karpukhin-Stern:2024:harm-map-in-higher-dim}).
  If $\partial \Omega$ is smooth, then $\liminf_{r \to 0} r^{2-d}\mu(B_r(p)) = 0$ even for $p \in \partial \Omega$. As $u$ is a stationary harmonic map into $\Sph^n$ with respect to the free boundary condition
  $u(\partial\Omega) \subset \Sph^n$, the regularity theory from \cite{Scheven:2006:regularity-free-bndry-harm-maps} implies that $u$ is smooth up to the boundary.

\subsubsection{Theorem~\ref{thm:qualitative-stability-stek}}

The proof is very similar to that of Theorem~\ref{thm:qualitative-stability} except
that the analogous variational argument is carried out with measures supported
on the boundary. Accordingly, consider the function
$L^\infty_+(\partial\Omega) \ni \rho \mapsto \overbar{\lambda}_k(\mu + \rho) =\lambda_k(\mu + \rho)(1 + \int_{\partial\Omega} \rho)$.
The subdifferential similarly satisfies
\begin{equation}\label{eq:steklov-subdiff}
   \partial_c(-\overbar{\lambda}_k)(\mu + 0) \subset \lambda_k \cdot\set*{\abs{u}^2\big|_{\partial\Omega} - 1}{u \in H^1(\Omega,\ell^2),\ \Delta u = \lambda u\mu,\ \int_{\partial \Omega} \abs{u}^2 \d\mu \leq 1}.
\end{equation}
Then Proposition~\ref{prop:multip-rule} with $S = L^\infty_+(\partial\Omega)$ yields a map $u \in H^1(\Omega, \ell^2)$ such that $\Delta u = \lambda_k u\mu$,
  $\int_{\partial\Omega} \abs{u}^2d\mu \leq 1$, and
  \begin{equation}
    \int_{\partial\Omega} (\abs{u}^2-1)\rho  \geq 0 \quad \forall \rho \in L^\infty_+(\partial\Omega).
  \end{equation}
Analogously, $\abs{u} \geq 1$ in $L^2(\partial\Omega)$ and $\abs{u} = 1$ in $L^2(\partial\Omega, \mu)$. The maximum principle in the form of~\eqref{ineq:max-principle} can be
deduced for any eigenmap $\Delta u = \lambda_k u \mu$ by approximating $u/\abs{u}$ with $v_\epsilon := u / (\epsilon + \abs{u}^2)^{1/2} \in H^1(\Omega)$.
We may therefore test \eqref{ineq:max-principle} against $(\abs{u} - 1)_+$ to obtain
\begin{equation}
  0\leq \int_{\brc{\abs{u} \geq 1}} \abs{\d\abs{u}}^2 \leq \lambda_k\int_{\partial \Omega} (\abs{u} - 1)_+\abs{u}\d\mu = 0.
\end{equation}
Hence, $\abs{u} \leq 1$ on $\Omega$ and $\abs{u} = 1$ on $\partial\Omega$.

Let $\harm \colon C^0(\partial\Omega) \to C^0(\overbar{\Omega}) \cap C^\infty(\Omega)$ be the harmonic extension operator, and
let $\phi \in \mathrm{Lip}(\partial \Omega)$. Then $\harm (\phi) \in H^1 \cap C^0(\overbar{\Omega})$ and
\begin{multline}
  \lambda_k\int_{\partial\Omega} \phi \d \mu = \lambda_k\int_{\partial\Omega} \brt{u, u \harm(\phi)} \d \mu = \int_{\Omega} \brt{\d u, \d (u \harm (\phi))}
   \\ = \int_{\Omega} \harm (\phi) \abs{\d u}^2 + \frac{1}{2}\int_{\Omega} \brt{\d\harm(\phi), \d\abs{u}^2},
\end{multline}
where the last integral vanishes, since $\abs{u}^2 - 1 \in H^1_0(\Omega)$. Therefore,
\begin{equation}\label{eq:meas-energy}
  \lambda_k\int_{\partial\Omega} \phi \d \mu = \int_{\Omega} \harm (\phi) \abs{\d u}^2 \quad\forall \phi \in \mathrm{Lip}(\partial \Omega)
  \implies \lambda_k\mu = \harm^* (\abs{\d u}^2).
\end{equation}
By Lemma~\ref{lem:harm-ext-adj}, $\mu \in L^1(\partial \Omega)$.
Recall that $\Delta u = \lambda_k u \mu$ in $H^1(\Omega)^*$ means that $\Delta u|_{\Omega} = 0$ and $\partial_n u|_{\partial \Omega} = \lambda_k u \mu \in L^1(\partial\Omega, \ell^2)$,
and we have $\abs{\partial_n u} =\brt{\partial_n u, u} = \lambda_k \mu$. Clearly, $u$ is free boundary harmonic in the sense of \eqref{eq:free-bndry-ball}, since
$\brt{u,v} = 0$ a.e. on $\partial\Omega$ if $v(x) \in T_{u(x)}\mathbb{S}^\infty$ for a.e. $x \in \partial\Omega$.

The regularity in dimension 2 follows from \cite[Lemma~3.2]{Vinokurov:2026:first-two-steklov}. When
$H^1(\Omega)\to L^2(\mu)$ is compact and $\Omega$ is smooth, we obtain $im\, u \subset \mathbb{B}^n$ because of the compactness of the operator $T$ from Theorem~\ref{thm:subdiff-eigenval-all}.
We then proceed, as in the argument for Theorem~\ref{thm:qualitative-stability},
by showing that $u$ is a locally energy-minimizing map into $\R^n$ with respect to the free boundary condition $u(\partial\Omega) \subset \Sph^{n-1}$.
By~\eqref{eq:meas-energy}, we again have $\liminf_{r \to 0} r^{2-d}\int_{B_r(p)}\abs{\d u}^2 \leq C\liminf_{r \to 0} r^{2-d}\mu(B_r(p)) = 0$, and
the regularity theory from \cite{Scheven:2006:regularity-free-bndry-harm-maps} completes the proof of Theorem~\ref{thm:qualitative-stability-stek}.

\begin{remark}\label{rem:free-bndry-l1}
  For a harmonic map $u \in H^1(\Omega, N)$ into a finite-dimensional manifold $N$ with respect to the free boundary condition $u(\partial\Omega) \subset S$, one can choose
  a smooth vector-valued map $F$ defined on a tubular neighborhood of $S \subset N$ such that $F|_S = 0$ and $\d F|_S = P_{(TS)^{\perp}}$,
  where $P_{(TS)^{\perp}}$ is the orthogonal projection onto the normal bundle of $S$ in $N$. Instead of $\abs{u}^2 - 1$, one considers
  $V:= F \circ u \in H^1_0$. Computing
  $\Delta V|_{\Omega} = \mathrm{Hess}^N F(u)(\d u, \d u) \in L^1$, one similarly proves that
  $\partial_n V = \d F (\partial_n u) = \partial_n u\in L^1$.
\end{remark}

\begin{lemma}\label{lem:harm-ext-adj}
  Let $\Omega$ be a bounded Lipschitz domain, and let $\harm \colon C^0(\partial\Omega) \to C^0(\overbar{\Omega})$ be the harmonic extension operator.
  Then its adjoint $\harm^* \colon \mathcal{M}(\overbar{\Omega}) \to \mathcal{M}(\partial\Omega)$ restricts to
  $\harm^*|_{L^1} \colon L^1(\overbar{\Omega}) \to L^1(\partial\Omega)$.
\end{lemma}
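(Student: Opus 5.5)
Let $f\in L^1(\Omega)$ with $f\ge 0$ (the general case follows by splitting into positive and negative parts). The goal is to show that $\harm^* f$ is absolutely continuous with respect to $\mathcal{H}^{d-1}|_{\partial\Omega}$ with an $L^1$ density. For $\phi\in C^0(\partial\Omega)$ we have
\[
\langle \harm^* f,\phi\rangle=\int_\Omega f(x)\,\harm(\phi)(x)\,dx .
\]
The plan is to write $\harm(\phi)(x)=\int_{\partial\Omega}\phi\,d\omega^x$ using harmonic measure and then apply Fubini. This gives $\harm^* f=\int_\Omega f(x)\,\omega^x\,dx$, a superposition of harmonic measures. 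Since $\Omega$ is a bounded Lipschitz domain, Dahlberg's theorem (valid on Riemannian manifolds via local bi-Lipschitz charts and standard comparison for elliptic operators with Lipschitz or bounded measurable coefficients) shows that $\omega^x$ is absolutely continuous with respect to surface measure $\sigma=\mathcal{H}^{d-1}|_{\partial\Omega}$. Its density is the Poisson kernel $P(x,\cdot)=d\omega^x/d\sigma$, and $P(x,\cdot)\in L^2(\partial\Omega,\sigma)$ for each $x$. Hence $\harm^* f=\bigl(\int_\Omega f(x)P(x,\cdot)\,dx\bigr)\sigma$.

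The key point is integrability of this density. Because $P\ge0$, Tonelli's theorem gives
\[
\int_{\partial\Omega}\int_\Omega f(x)P(x,y)\,dx\,d\sigma(y)=\int_\Omega f(x)\,\omega^x(\partial\Omega)\,dx=\|f\|_{L^1},
\]
so $\harm^* f\in L^1(\partial\Omega)$ with $\|\harm^* f\|_{L^1}\le\|f\|_{L^1}$. In other words, $\harm^*$ is a positive contraction from $L^1$ to $L^1$, and we never need uniform bounds on $P$. One must also check joint measurability of $P(x,y)$. Here I would use the representation $P(x,y)=\lim_{r\to0}\omega^x(\Delta_r(y))/\sigma(\Delta_r(y))$, which exists $\sigma$-a.e. and is measurable in $(x,y)$, together with Harnack's inequality in $x$.

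The main obstacle is this absolute continuity of harmonic measure, since it genuinely uses the Lipschitz assumption. If I want to avoid citing Dahlberg on manifolds, an alternative is a duality argument. It suffices to show that $\harm^*$ maps $L^\infty(\Omega)$ into $L^1(\partial\Omega)$, because $L^\infty$ is dense in $L^1$ and the contraction bound above lets us pass to the limit in the closed subspace $L^1(\partial\Omega)\subset\mathcal{M}(\partial\Omega)$. For $f\in L^\infty$, let $w\in H^1_0(\Omega)$ solve $\Delta w=f$. Green's identity then gives $\int_\Omega f\,\harm\phi=-\int_{\partial\Omega}\phi\,\partial_n w$. So $\harm^* f=-\partial_n w$, and it remains to prove $\partial_n w\in L^1(\partial\Omega)$. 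On Lipschitz domains this holds: nontangential maximal function estimates for the Dirichlet problem, or simply $w\in H^{3/2}(\Omega)$ (Jerison–Kenig), give $\nabla w\in L^2(\partial\Omega)$. I would present the Green's-function/duality route as the main argument and treat Dahlberg's theorem as the underlying input either way.
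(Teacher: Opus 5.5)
Your proposal is correct. Your first route is the paper's proof: write $\harm\phi(x)=\int_{\partial\Omega}\phi\,\d\omega^x$, so that $\harm^*f=\int_\Omega f(x)\,\omega^x\,\d x$, and invoke Dahlberg's theorem (the paper cites Mitrea--Taylor for the Riemannian case). The paper, however, never introduces the Poisson kernel. It only uses $\nu(A)=\int_\Omega f(x)\,\omega^x(A)\,\d x$ for Borel $A$. It notes that $\mathcal{H}^{d-1}(A)=0$ forces $\omega^x(A)=0$ for every $x\in\Omega$, hence $\abs{\nu}(A)=0$, and concludes by Radon--Nikodym. Integrability of the density is automatic because $\nu$ is a finite measure. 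So the Tonelli computation, the $L^2$ bound on $P(x,\cdot)$, and the joint measurability of $P(x,y)$ are all unnecessary.

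Your Green's-function route is genuinely different. It replaces absolute continuity of harmonic measure by $L^2$ boundary regularity for the Dirichlet problem on Lipschitz domains. This gives the stronger statement $\harm^*f=-\partial_n w\in L^2(\partial\Omega)$ for bounded $f$. The cost is heavier input: the $L^2$ regularity problem (Jerison--Kenig, Verchota), plus a divergence theorem with nontangential limits to identify the weak normal derivative with a boundary function.

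If you present that route, three details need fixing.
\begin{itemize}
\item The bound in the density step should be $\norm{\harm^*f}_{\mathcal{M}}\le\norm{\harm}\,\norm{f}_{L^1}\le\norm{f}_{L^1}$. This comes from the maximum principle, not from the kernel computation you are trying to avoid.
\item The regularity $w\in H^{3/2}(\Omega)$ alone does not give $\nabla w\in L^2(\partial\Omega)$, because the trace map $H^{1/2}(\Omega)\to L^2(\partial\Omega)$ fails at this endpoint. You genuinely need the nontangential maximal function estimate.
\item Dahlberg's theorem does not survive bounded measurable coefficients: elliptic measure can then be singular (Caffarelli--Fabes--Kenig). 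So transfer it to the manifold via smooth charts, in which $\Omega$ stays Lipschitz and the Laplace--Beltrami operator has smooth coefficients, or simply cite Mitrea--Taylor.
\end{itemize}
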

\begin{proof}
  Recall that $\harm(\phi)(x) = \int_{\partial \Omega} \phi \d \omega^x$, where $\brc{\omega^x}_{x \in \overbar{\Omega}}$ is the family of harmonic measures.
  Since $\harm \colon C^0(\partial\Omega) \to C^0(\overbar{\Omega})$ and $\harm(1) = 1$, the function $x \mapsto \omega^x(A)$ is Borel for every Borel subset $A \subset \partial\Omega$ and $\omega^x(\partial\Omega) \equiv 1$.
  Furthermore, Dahlberg's theorem (see \cite{Dahlberg:1977:harm-measure} and \cite[Proposition~5.9]{Mitrea-Taylor:1999:bndry-layer-lip}) states that for Lipschitz $\Omega$,
  the harmonic measures $\brc{\omega^x}_{x \in \Omega}$ are absolutely continuous with respect to $\mathcal{H}^{d-1}\big|_{\partial\Omega}$.

  For a function $f \in L^1(\Omega)$, the measure $\nu = \harm^*(f) \in \mathcal{M}(\partial\Omega)$ is given by
  \begin{equation}
    \nu(A) = \int_{\Omega} f(x) \omega^x(A).
  \end{equation}
  If $\mathcal{H}^{d-1}(A) = 0$, Dahlberg's theorem yields $\omega^x(A) = 0$ and $\abs{\nu}(A) = 0$. Hence $\nu \in L^1(\partial\Omega)$ by
  the Radon--Nikodym theorem.
\end{proof}

\subsubsection{Corollary~\ref{cor:existence}}
  Let $\brc{\mu_n}$ be a maximizing sequence of continuous probability measures such that $\lambda_k(\mu_n) \to \Lambda_k(\Omega,g)$. After passing to a subsequence
  if necessary, we may assume $\mu_n \oset{w^*}{\to} \mu \in \mathcal{M}_+(\overbar{\Omega})$ with $\mu(\overbar{\Omega})=1$. Then
  \begin{equation}\label{eq:upper-semi-c}
    \limsup_n \lambda_k(\mu_n) \leq \lambda_k(\mu)
  \end{equation}
  by~\cite[Proposition~1.1]{Kokarev:2014:measure-eigenval}. The limiting measure $\mu$ decomposes as the sum of
  its continuous and atomic parts:
  \begin{equation}
    \mu = \mu^c + \sum_{p} w_p \delta_p.
  \end{equation}
  We extend each measure to $M$ ($\mu(A):= \mu(A\cap \overbar{\Omega})$) and analyze their behavior near the atoms
  of $\mu$. If $d \geq 3$, \cite[Proposition~4.6]{Vinokurov:2025:higher-dim-harm-eigenval} implies that $\mu = \mu^c$,
  and $\mu$ is a maximizing measure by \eqref{eq:upper-semi-c}. The equality $\Lambda_k(\Omega,g) = \Lambda_k^*(\Omega,g)$ follows from Theorem~\ref{thm:qualitative-stability}.

  If $d = 2$, \cite[Proposition~4.10]{Vinokurov:2026:p-harm-and-conf-class-opt} implies that for any sufficiently small $\epsilon > 0$, one has
  \begin{equation}\label{eq:meas-upper-bound}
    (1-\epsilon r) \Lambda_k(\Omega,g)\leq \overbar{\lambda}_{k_0}(\Omega,\mu^c) + \sum_{i\geq 1} \Lambda_{k_i}(\Sph^2)
  \end{equation}
  for some $0\leq r \leq k$, $\sum_{i\geq 0} k_i = k - r$ (here, $r$ corresponds to the number of secondary bubbles at infinity). On the other hand,
  \begin{equation}\label{eq:meas-lower-bound}
    \max_{1\leq b \leq k} \brc{\Lambda_{k-b}^*(\Omega,[g]) + 8\pi b} \leq \Lambda_k^*(\Omega,[g]) \leq \Lambda_k(\Omega,[g]),
  \end{equation}
  as follows from \cite{Colbois-ElSoufi:2003:extremal}, where
  $8\pi b = \Lambda_b^*(\Sph^2)$ by \cite[Theorem~1.2]{Karpukhin-Nadirashvili-Penskoi-Polterovich:2021:eigenval-on-sphere}.
  Applying \eqref{eq:meas-upper-bound}, \eqref{eq:meas-lower-bound} to $\Omega = \Sph^2$
  and using Theorem~\ref{thm:qualitative-stability}, we obtain
  by induction on $k$ that $r=0$ and $\Lambda_k(\Sph^2) = \Lambda_k^*(\Sph^2) = 8\pi k$ (cf. \cite[(4.17) and below]{Vinokurov:2026:p-harm-and-conf-class-opt}).

  Therefore, $\Lambda_k(\Omega,g)\leq \overbar{\lambda}_{k-b}(\Omega,\mu^c) + 8\pi b$ for some $b \leq k$. Combining this inequality with the assumption for $d=2$, we
  deduce that $b = 0$, and $\mu^c$ is a maximizing measure. The equality $\Lambda_k(\Omega,g) = \Lambda_k^*(\Omega,g)$ then follows
  from Theorem~\ref{thm:qualitative-stability} by induction on $k$. Thus, the proof is complete.

  \begin{remark}\label{rem:superlevel-stability}
    The upper semicontinuity of $\mu \mapsto \lambda_k(\mu)$ and the absence of
    atoms in weak-$*$ limits for $d\geq3$ imply that the superlevel sets $\set{\mu \in \mathcal{P}^c(\overbar{\Omega})}{\lambda_k(\mu) \geq c}$
    are weak-$*$ stable for every $c > 0$, where
    $\mu \mapsto \lambda_k(\mu)$ is regarded as a function on the space of continuous probability measures
    $\mathcal{P}^c(\overbar{\Omega}):= \set{\mu \in \mathcal{M}_+^c(\overbar{\Omega})}{\mu(\overbar{\Omega})=1}$. In dimension $2$, the weak-$*$ stability
    holds at least for the superlevel sets $\set{\mu \in \mathcal{P}^c(\overbar{\Omega})}{\lambda_k(\mu) \geq c}$
    for every $c > \Lambda_{k-1}(\Omega,[g]) + 8\pi$.
  \end{remark}

\subsubsection{Corollary~\ref{cor:existence-stek}}
  We proceed as in the proof of Corollary~\ref{cor:existence}. In dimensions $d\geq 3$,  Remark~\ref{rem:superlevel-stability}
  shows that if $\lambda_k(\mu_n) \to \Sigma_k(\Omega,g)$ for a sequence $\brc{\mu_n} \subset \mathcal{P}^c(\partial\Omega)$, then, up to a subsequence,
  $\mu_n \oset{w^*}{\to} \mu \in \mathcal{P}^c(\partial\Omega)$ with $\lambda_k(\mu) = \Sigma_k(\Omega,g)$. Note that the test functions constructed by
  \cite[Proposition~4.6]{Vinokurov:2025:higher-dim-harm-eigenval} are, a priori, only Lipschitz, as permitted by the assumed regularity of $\Omega$.
  Nevertheless, the argument still applies.

  The case $d = 2$ is handled in the same way as in the proof of Corollary~\ref{cor:existence}. By \cite[Proposition~4.10]{Vinokurov:2026:p-harm-and-conf-class-opt}
  and \cite[Corollary~A.8]{Vinokurov:2025:sym-eigen-val-lms}, for every sufficiently small $\epsilon > 0$, we have
  \begin{equation}
    (1-\epsilon r) \Sigma_k(\Omega,g)\leq \overbar{\lambda}_{k_0}(\Omega,\mu^c) + \sum_{i\geq 1} \Sigma_{k_i}(\mathbb{D}^2),
  \end{equation}
  since $\R^2_+ \cup \brc{\infty}$, $\mathbb{S}^2_+$, $\mathbb{D}^2$ are conformally equivalent. Now,
  \begin{equation}
    \max_{1\leq b \leq k} \brc{\Sigma_{k-b}^*(\Omega,[g]) + 2\pi b} \leq \Sigma_k^*(\Omega,[g]) \leq \Sigma_k(\Omega,[g])
  \end{equation}
  by \cite{Fraser-Schoen:2020:steklov-unions} (see also \cite[Section~3.3.1]{Vinokurov:2025:sym-eigen-val-lms}),
  where $2\pi b = \Sigma_b^*(\mathbb{D}^2)$ (see \cite[Remark 1.12]{Vinokurov:2025:sym-eigen-val-lms}).
  The remainder of the argument is identical to that of Corollary~\ref{cor:existence}.

\subsubsection{Corollary~\ref{cor:nonessen-boundary-dim-2}}
  To prove $\overbar{\lambda}_k(\Omega, \mu) < \Lambda_k(\Omega, g) \leq \Lambda_k(M, g)$, extend the measure $\mu$ to all of $M$, $\tilde{\mu}(A):= \mu(A\cap \overbar{\Omega})$.
  From the definition of $\overbar{\lambda}_k$, we have $\overbar{\lambda}_k(\Omega,\mu) \leq \overbar{\lambda}_k(M, \tilde{\mu})\leq \Lambda_k(M, g)$.
  If $\overbar{\lambda}_k(\Omega,\mu) = \Lambda_k(\Omega, g)$, then
  Theorem~\ref{thm:qualitative-stability} implies that $0 \neq \lambda_k\mu = \abs{\d u}^2$ for a harmonic map $u \in C^\infty(\Omega\setminus\sing u, \Sph^\infty)$.
  The open set $\Omega\setminus\sing u$ is connected and $H^1(\Omega\setminus\sing u) = H^1(\Omega)$, as the closed set $\sing u \subset \Omega$ has vanishing $(d-2)$-Hausdorff measure.

  The differential of $u$, $\lambda_k\mu = \abs{\d u}^2$, vanishes on the nonempty open set $U := \Omega\setminus(\sing u \cup \supp \mu)$, and hence
  $\Delta u = \abs{\d u}^2 u = 0$ on $U$ as well. Let $u = c \in \ell^2$ on a connected component of $U$.
  By the unique continuation principle applied to the functions $\brt{u,b}$ with $b\in \ell^2$, $b \perp c$, we conclude
  that $u = c$ on the domain $\Omega\setminus\sing u$ and hence on $\Omega$.
  This contradiction implies that we must have $\overbar{\lambda}_k(\Omega,\mu) < \Lambda_k(\Omega, g)$. Taking
  $\Omega = M$ with $\tilde{\mu}$ we obtain that $\overbar{\lambda}_k(\Omega,\mu) \leq \overbar{\lambda}_k(M, \tilde{\mu}) < \Lambda_k(M, g)$ if
  $\Omega \neq M$.

  Now, suppose $d = \dim \Omega = 2$ and $\Omega^* \approx \Sph^2$. From the argument above, we already have that $\Lambda_k(\Omega, [g])\leq \Lambda_k(\Omega^*, [g]) = 8\pi k$,
  and the lower bound follows from Remark~\ref{rem:lower-lambda-bounds}.

  If $\Omega^* \not\approx \Sph^2$, let $k \geq 1$ be the smallest positive integer such that $\Lambda_k(\Omega, [g]) = \Lambda_k(\Omega^*, [g])$.
  Then either $k = 1$, or $\Lambda_{k-1}(\Omega, [g]) + 8\pi < \Lambda_k(\Omega, [g])$. Otherwise, we would have
  \begin{equation}
    \Lambda_k(\Omega^*, [g]) = \Lambda_k(\Omega, [g]) = \Lambda_{k-1}(\Omega, [g]) + 8\pi \leq \Lambda_{k-1}(\Omega^*, [g]) + 8\pi \leq \Lambda_k(\Omega^*, [g]);
  \end{equation}
  hence $\Lambda_{k-1}(\Omega, [g]) = \Lambda_{k-1}(\Omega^*, [g])$, contradicting the minimality of $k$. On the other hand,
  by Remark~\ref{rem:lambda-1}, one has $8\pi < \Lambda_1(\Omega^*, [g]) = \Lambda_1(\Omega, [g])$ if $k = 1$.

  Thus, we have established that
  $\Lambda_{k-1}(\Omega, [g]) + 8\pi < \Lambda_k(\Omega, [g])$ regardless of whether $k = 1$. Then Corollary~\ref{cor:existence} applies, leading to the existence
  of a measure $\mu \in \mathcal{M}_{+}^c(\overbar{\Omega})$ such that $\overbar{\lambda}_k(\Omega,\mu) = \Lambda_k(\Omega, [g]) = \Lambda_k(\Omega^*, [g])$.
  Since this is impossible, as shown at the beginning of the proof, we must have $\Lambda_k(\Omega, [g]) < \Lambda_k(\Omega^*, [g])$ for all $k \geq 1$
  when $\Omega^* \not\approx \Sph^2$.

\addsec{Acknowledgements}
This work forms part of the author's PhD thesis, written under the supervision of Mikhail Karpukhin and Iosif Polterovich.
The author is grateful to them for their guidance and numerous fruitful discussions.

The author would like to thank the Isaac Newton Institute for Mathematical Sciences, Cambridge,
for its support and hospitality during the programme \emph{Geometric spectral theory and applications},
where part of the work on this paper was undertaken. This work was supported by EPSRC grant EP/Z000580/1.

The author acknowledges the support of the Natural Sciences and Engineering Research Council of Canada (NSERC)
through the Canada Graduate Research Scholarship--Doctoral (CGRS D).

\printbibliography

\end{document}